\theoremstyle{definition}
\newtheorem{theorem}{Theorem}[section]
\newtheorem{prop}[theorem]{Proposition}
\newtheorem{conjecture}[theorem]{Conjecture}
\newtheorem{lemma}[theorem]{Lemma}
\newtheorem{definition}[theorem]{Definition}
\newtheorem{corollary}[theorem]{Corollary}
\newtheorem{remark}[theorem]{Remark}
\newtheorem{example}[theorem]{Example}
\newcommand{\conv}[1]{\mathrm{conv}\left(#1\right)}
\newcommand{\R}{\mathbb{R}}
\newcommand{\Z}{\mathbb{Z}}
\newcommand{\Q}{\mathbb{Q}}
\newcommand{\rk}{\text{rk }}
\newcommand{\cone}[1]{\mathrm{cone}\left(#1\right)}
\newcommand{\1}{\mathbbm{1}}
\def\ss{\textsuperscript}
\newcommand{\note}[1]{\par \noindent
  \framebox{\begin{minipage}[c]{0.95 \textwidth} NOTE:
      #1 \end{minipage}}\par}
\newcommand\commentout[1]{}
\begin{document}



\title{Laplacian Simplices}

\author{Benjamin Braun}
\address{Department of Mathematics\\
         University of Kentucky\\
         Lexington, KY 40506--0027}
\email{benjamin.braun@uky.edu}

\author{Marie Meyer}
\address{Department of Mathematics\\
         University of Kentucky\\
         Lexington, KY 40506--0027}
\email{marie.meyer@uky.edu}

\subjclass[2010]{Primary: 52B20, 05E40, 05A20, 05A15, 05C50}


\date{21 June 2017}

\thanks{
The first author was partially supported by grant H98230-16-1-0045 from the U.S. National Security Agency.
The authors thank Tefjol Pllaha, Liam Solus, Akiyoshi Tsuchiya, and Devin Willmott for their comments and suggestions.
}

\begin{abstract}
This paper initiates the study of the \emph{Laplacian simplex} $T_G$ obtained from a finite graph $G$ by taking the convex hull of the columns of the Laplacian matrix for $G$.
Basic properties of these simplices are established, and then a systematic investigation of $T_G$ for trees, cycles, and complete graphs is provided.
Motivated by a conjecture of Hibi and Ohsugi, our investigation focuses on reflexivity, the integer decomposition property, and unimodality of Ehrhart $h^*$-vectors.
We prove that if $G$ is a tree, odd cycle, complete graph, or a whiskering of an even cycle, then $T_G$ is reflexive.
We show that while $T_{K_n}$ has the integer decomposition property, $T_{C_n}$ for odd cycles does not.
The Ehrhart $h^*$-vectors of $T_G$ for trees, odd cycles, and complete graphs are shown to be unimodal.
As a special case it is shown that when $n$ is an odd prime, the Ehrhart $h^*$-vector of $T_{C_n}$ is given by $(h_0^*,\ldots,h_{n-1}^*)=(1,\ldots,1,n^2-n+1,1,\ldots, 1)$.
We also provide a combinatorial interpretation of the Ehrhart $h^*$-vector for $T_{K_n}$.
\end{abstract}

\maketitle

\tableofcontents

\newpage

\section{Introduction}\label{sec:intro}

\subsection{Motivation}
Let $G$ be a finite graph on the vertex set $\{1,2,\ldots,n\}$.
There are many profitable ways to associate a polytope to $G$.
One well-known example is the \emph{edge polytope} of $G$, obtained by taking the convex hull of the vectors $e_i+e_j$ for each edge $\{i,j\}$ in $G$, where $e_i$ denotes the $i\ss{th}$ standard basis vector in $\R^n$.
Equivalently, the edge polytope is the convex hull of the columns of the unsigned vertex-edge incidence matrix of $G$.
Many geometric, combinatorial, and algebraic properties of edge polytopes have been established over the past several decades, e.g.~\cite{HibiEhrhartEdge,hibiohsugiedgepolytope,TranZiegler,VillarrealEdgePolytopes}.
Another well-known matrix associated with a graph $G$ is the Laplacian $L(G)$ (defined in Section~\ref{sec:background}).
Our purpose in this paper is to study the analogue of the edge polytope obtained by taking the convex hull of the columns of $L(G)$, resulting in a lattice simplex that we call the \emph{Laplacian simplex} of $G$ and denote $T_G$.

While to our knowledge the simplex $T_G$ has not been previously studied, there has been recent research regarding graph Laplacians from the perspective of polyhedral combinatorics and integer-point enumeration.
For example, M. Beck and the first author investigated hyperplane arrangements defined by graph Laplacians with connections to nowhere-harmonic colorings and inside-out polytopes~\cite{BeckBraunNHColorings}.
A. Padrol and J. Pfeifle explored Laplacian Eigenpolytopes~\cite{padrolpfeiflelaplacian} with a focus on the effect of graph operations on the associated polytopes.
The first author, R. Davis, J. Doering, A. Harrison, J. Noll, and C. Taylor studied integer-point enumeration for polyhedral cones constrained by graph Laplacian minors~\cite{BraunLaplacianMinors}.
In a recent preprint~\cite{matrixtreetheorem}, A. Dall and J. Pfeifle analyzed polyhedral decompositions of the zonotope defined as the Minkowski sum of the line segments from the origin to each column of $L(G)$ in order to give a polyhedral proof of the Matrix-Tree Theorem.

Beyond the motivation of studying $T_G$ in order to develop a Laplacian analogue of the theory of edge polytopes, our primary motivation in this paper is the following conjecture (all undefined terms are defined in Section~\ref{sec:background}).

\begin{conjecture}[Hibi and Ohsugi \cite{hibiohsugiconj}] \label{conj:hibiohsugi}
If $\mathcal P$ is a lattice polytope that is reflexive and satisfies the integer decomposition property, then $\mathcal P$ has a unimodal Ehrhart $h^*$-vector.
\end{conjecture}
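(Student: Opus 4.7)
The statement is the Hibi--Ohsugi conjecture, which is currently open in full generality; accordingly, the only plan I can honestly propose is one that either attacks known reductions of the conjecture or, as the abstract signals, establishes it within restricted families such as the Laplacian simplices $T_G$ introduced above. I sketch both angles.

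The general-purpose reduction begins with Hibi's theorem: a lattice polytope $\mathcal P$ is reflexive if and only if its Ehrhart $h^*$-vector is palindromic. Combined with the hypothesis that $\mathcal P$ has the integer decomposition property, the associated toric semigroup ring is normal and Cohen--Macaulay, and reflexivity upgrades this to Gorenstein. The conjecture therefore reduces to showing that the Hilbert series of a Gorenstein normal toric ring coming from a reflexive IDP polytope is unimodal. The cleanest route is to establish the strong Lefschetz property on a suitable Artinian reduction of this graded algebra, which would immediately yield unimodality of the symmetric $h^*$-vector.

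The first concrete plan I would attempt is to show that the IDP hypothesis produces a regular unimodular triangulation of $\mathcal P$. Such a triangulation gives a squarefree initial ideal of the toric ideal and deforms the toric ring to a Stanley--Reisner ring, after which one can bring in tools from the combinatorics of simplicial complexes, in particular results relating Lefschetz properties to shellability and to the existence of sufficiently generic linear systems of parameters. The main obstacle is that it is not known whether an IDP polytope must admit a regular unimodular triangulation; this implication is itself a major open problem, so the approach would require either proving it or bypassing it with an intrinsic Lefschetz-type argument on the Gorenstein toric ring.

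Given these obstructions, the plan I would actually carry out in the setting of this paper is the abstract's stated program: verify the conjecture for $\mathcal P = T_G$ by separately establishing reflexivity, IDP, and unimodality for particular graph families. For trees I would exploit the tree-structured Laplacian (e.g.\ a recursion on leaves) to produce a unimodular triangulation of $T_G$ and read $h^*$ directly. For complete graphs I would aim for an explicit combinatorial model of lattice points in the fundamental parallelepiped of $\cone{T_{K_n}}$, as the abstract announces a combinatorial interpretation of $h^*$ is achieved. For odd cycles $C_n$ I would compute $h^*(T_{C_n})$ via Hermite-normal-form bookkeeping on the Laplacian and verify the prime-$n$ formula $(1,\ldots,1,n^2-n+1,1,\ldots,1)$ by hand. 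The hardest step in each case is unimodality itself: reflexivity reduces to identifying vertices of the dual simplex, IDP can often be decided through normal-form arguments, but unimodality generally requires either a genuine triangulation or a bijective combinatorial model, and producing the latter uniformly across families is precisely where the obstruction to a single unified proof returns.
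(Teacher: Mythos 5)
This statement is an open conjecture of Hibi and Ohsugi that the paper quotes as motivation; the paper contains no proof of it, and you are right to say that no honest proof can be proposed. Your assessment of the general reductions (Gorenstein normal toric rings, Lefschetz properties, the open question of whether IDP forces a regular unimodular triangulation) is accurate, and your fallback plan matches the paper's actual program closely: trees are handled by showing $T_G$ is unimodularly equivalent to the minimal reflexive simplex $S_{n-1}(1)$ with $h^*=(1,\ldots,1)$; complete graphs are handled exactly as you predict, via total unimodularity of the facet-normal matrix (giving a regular unimodular triangulation, hence IDP and unimodality by Athanasiadis) together with a weak-composition model for the lattice points of $\cone{T_{K_n}}$; and odd cycles are handled by a direct analysis of the fundamental parallelepiped modulo $\kappa=n$.

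Two corrections to your framing of the cycle case. First, the paper proves that $T_{C_n}$ for odd $n$ is \emph{not} IDP (Corollary~\ref{cor:oddcyclenotidp}), so odd cycles are not instances of the conjecture's hypotheses at all; they instead show that unimodality can hold without IDP, which is evidence bearing on the Schepers--van Langenhoven question rather than a verification of Conjecture~\ref{conj:hibiohsugi}. Second, unimodality for $T_{C_n}$ is not obtained from a triangulation or a Hermite-normal-form computation: the paper's argument is a height-raising injection $p\mapsto p+(0,\ldots,0,1)$ on lattice points of $\Pi_{T_{C_n}}$ below the middle height, which the authors interpret as exhibiting a weak Lefschetz element, and which requires a delicate case analysis of the residues $m_i$ of the coefficient vector $b$. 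This is precisely the ``intrinsic Lefschetz-type argument'' you mention as a bypass, carried out by hand in one family.
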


The cause of unimodality for $h^*$-vectors in Ehrhart theory is mysterious.
Schepers and van Langenhoven~\cite{schepersvanl} have raised the question of whether or not the integer decomposition property alone is sufficient to force unimodality of the $h^*$-vector for a lattice polytope.
In general, the interplay of the qualities of a lattice polytope being reflexive, satisfying the integer decomposition property, and having a unimodal $h^*$-vector is not well-understood~\cite{BraunUnimodal}.
Thus, when new families of lattice polytopes are introduced, it is of interest to explore how these three properties behave for that family.
Further, lattice simplices have been shown to be a rich source of examples and have been the subject of several recent investigations, especially in the context of Conjecture~\ref{conj:hibiohsugi}~\cite{BraunDavisFreeSum,BraunDavisSolusIDP,PayneLattice,SolusNumeral}.

\subsection{Our Contributions}

After reviewing necessary background in Section~\ref{sec:background}, we introduce and establish basic properties of Laplacian simplices in Section~\ref{sec:lapsimp}.
We show that several graph-theoretic operations produce reflexive Laplacian simplices (Theorem~\ref{thm:bridge} and Proposition~\ref{evenreflexive}).
We prove that if $G$ is a tree, odd cycle, complete graph, or the whiskering of an even cycle, then $T_G$ is reflexive (Proposition~\ref{prop:trees}, Theorem~\ref{cycle}, Proposition~\ref{evenreflexive}, and Theorem~\ref{complete}).
As a result of a general investigation of the structure of $h^*$-vectors for odd cycles (Theorem~\ref{primes}), we show that if $n$ is odd then $T_{C_n}$ does not have the integer decomposition property (Corollary~\ref{cor:oddcyclenotidp}).
On the other hand, we show that $T_{K_n}$ does have the integer decomposition property since it admits a regular unimodular triangulation (Corollary~\ref{cor:completeidp}).
We prove that for trees, odd cycles, and complete graphs, the $h^*$-vectors of their Laplacian simplices are unimodal (Corollary~\ref{cor:treeunim}, Theorem~\ref{unimodal}, and Corollary~\ref{cor:completeunimodal}).
Additionally, we provide a combinatorial interpretation of the $h^*$-vector for $T_{K_n}$ (Proposition~\ref{prop:completeh*}) and we determine that, when $n$ is an odd prime, the $h^*$-vector of $T_{C_n}$ is given by $(h_0^*,\ldots,h_{n-1}^*)=(1,\ldots,1,n^2-n+1,1,\ldots, 1)$ (Theorem~\ref{primes}).

\section{Background}\label{sec:background}

\subsection{Reflexive Polytopes}\label{sec:reflexive}
A \emph{lattice polytope} of dimension $d$ is the convex hull of finitely many points in $\Z^n$, which together affinely span a $d$-dimensional hyperplane of $\R^n$. 
Two lattice polytopes are \emph{unimodularly equivalent} if there is a lattice preserving affine isomorphism mapping them onto each other. 
Consequently we consider lattice polytopes up to affine automorphisms of the lattice.
The \emph{dual polytope} of a full dimensional polytope $\mathcal P$ which contains the origin in its interior is 
\[
\mathcal P^* := \{ x \in \R^d \mid x \cdot y \le 1 \text{ for all } y \in \mathcal P\} \, .
\]
Duality satisfies $(\mathcal P^*)^* = \mathcal P$.
A $d$-polytope formed by the convex hull of $d+1$ vertices is called a \emph{$d$-simplex}.

Reflexive polytopes are a particularly important class of polytopes first introduced in \cite{BatyrevDualPolyhedra}.
\begin{definition} 
A lattice polytope $\mathcal P$ is called \emph{reflexive} if it contains the origin in its interior, and its dual $\mathcal P^*$ is a lattice polytope. 
\end{definition}
Any lattice translate of a reflexive polytope is also called reflexive.
The following generalization of reflexive polytopes was introduced in \cite{Nill}.
A lattice point is \emph{primitive} if the line segment joining it and the origin contains no other lattice points.
The local index $\ell_F$ is equal to the integral distance from the origin to the affine hyperplane spanned by $F$. 

\begin{definition} \label{def:ref}
A lattice polytope $\mathcal P$ is \emph{$\ell$-reflexive} if, for some $\ell \in \Z_{>0}$, the following conditions hold:
\begin{enumerate}[(i)]
\item $\mathcal P$ contains the origin in its (strict) interior;
\item The vertices of $\mathcal P$ are primitive;
\item For any facet $F$ of $\mathcal P$ the local index $\ell_F = \ell$.
\end{enumerate}
\end{definition}
We refer to $\mathcal P$ as a \emph{reflexive polytope of index $\ell$}.
The reflexive polytopes of index $1$ are precisely the reflexive polytopes in Definition~\ref{def:ref}.  

\subsection{Ehrhart Theory}\label{sec:ehrhart}
For $t \in \Z_{>0}$, the \emph{$t$\ss{th} dilate} of $P$ is given by $t \mathcal P := \{tp \mid p \in \mathcal P\}$.
One technique used to recover dilates of polytopes is \emph{coning over the polytope}.
Given $\mathcal P = \conv{v_1, \ldots , v_m} \subseteq \R^n$, we lift these vertices into $\R^{n+1}$ by appending $1$ as their last coordinate to define $w_1=(v_1, 1), \ldots , w_m = (v_m, 1).$
The \emph{cone over $\mathcal P$} is
\[
\text{cone}(\mathcal P) = \{ \lambda_1 w_1 + \lambda_2 w_2 + \cdots + \lambda_m w_m \mid \lambda_1, \lambda_2, \ldots , \lambda_m \ge 0\} \subseteq \R^{n+1} \, . 
\]
For each $t \in \Z_{>0}$ we recover $t\mathcal P$ by considering $\text{cone}(\mathcal P) \cap \{z_{n+1}=t\}$.
To record the number of lattice points we let $L_{\mathcal P}(t) = | t \mathcal P \cap \Z^n |$.
In \cite{Ehrhart}, Ehrhart proved that $L_{\mathcal P}(t)$, called the Ehrhart polynomial of $\mathcal P$, is a polynomial in degree $d=\dim(\mathcal{P})$ with generating function 
\[
\text{Ehr}_{\mathcal P}(z) = 1 + \sum_{t\ge1} L_{\mathcal P}(t)z^t = \frac{h_d^*z^d + h_{d-1}^*z^{d-1} + \cdots + h_1^*z + h_0^*}{(1-z)^{d+1}}.
\]
The above is referred to as the \emph{Ehrhart series} of $\mathcal P$. 
We call $h^*(\mathcal P) = (h_0^*, h_1^*, \ldots , h_d^*)$ the \emph{$h^*$-vector} or \emph{$\delta$-vector} of $\mathcal P$. 
The \emph{Euclidean volume} of a polytope $\mathcal P$ is vol$(\mathcal P) = \frac{1}{d!}\sum_{i=0}^d h_i^*$. 
The \emph{normalized volume} is given by $d!\text{vol}(\mathcal P) = \sum_{i=0}^d h_i^*$.
Stanley proved the $h^*$-vector of a convex lattice $d$-polytope satisfies $h_0^* =1$ and $h_i^* \in \Z_{\ge 0}$ \cite{Stanley1}. 
Note that if $\mathcal P$ and $\mathcal Q$ are lattice polytopes such that $\mathcal Q$ is the image of $\mathcal P$ under an affine unimodular transformation, then their Ehrhart series are equal.

A vector $x = (x_0, x_1, \ldots , x_d)$ is \emph{unimodal} if there exists a $j \in [d]$ such that $x_i \le x_{i+1}$ for all $0 \le i < j$ and $x_k\ge x_{k+1}$ for all $j \le k < d$.
A major open problem in Ehrhart theory is to determine properties of $\mathcal P$ that imply unimodality of $h^*(\mathcal P)$ \cite{BraunUnimodal}.
For the case of symmetric $h^*$-vectors, Hibi established the following connection to reflexive polytopes.
\begin{theorem}[Hibi \cite{Hibi1}]
A $d$-dimensional lattice polytope $\mathcal P \subseteq \R^d$ containing the origin in its interior is reflexive if and only if $h^*(\mathcal P)$ satisfies $h_i^* = h_{d-i}^*$ for $0 \le i \le \lfloor \frac{d}{2} \rfloor.$
\end{theorem}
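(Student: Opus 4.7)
The plan is to use Ehrhart--Macdonald reciprocity to translate the $h^*$-symmetry condition into the numerical identity $L_{\mathcal P^\circ}(t) = L_{\mathcal P}(t-1)$ for every $t \in \Z_{\geq 1}$, and then to show that this identity characterizes reflexivity. Ehrhart--Macdonald reciprocity, $L_{\mathcal P^\circ}(t) = (-1)^d L_{\mathcal P}(-t)$, is equivalent at the level of generating functions to $\sum_{t\ge 1}L_{\mathcal P^\circ}(t)z^t = (-1)^{d+1}\text{Ehr}_{\mathcal P}(1/z)$. Substituting the rational form of $\text{Ehr}_{\mathcal P}(z)$ from Section~\ref{sec:ehrhart} and simplifying gives
\[
\sum_{t\ge 1}L_{\mathcal P^\circ}(t)z^t = \frac{h_0^* z^{d+1} + h_1^* z^d + \cdots + h_d^* z}{(1-z)^{d+1}}.
\]
On the other hand, $z\cdot \text{Ehr}_{\mathcal P}(z) = \sum_{t\ge 1}L_{\mathcal P}(t-1)z^t$ has numerator $h_d^* z^{d+1} + h_{d-1}^* z^d + \cdots + h_0^* z$. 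Equating coefficients of these two numerators, the identity $L_{\mathcal P^\circ}(t) = L_{\mathcal P}(t-1)$ for all $t\ge 1$ is equivalent to $h_i^* = h_{d-i}^*$ for $0\le i\le \lfloor d/2\rfloor$, so it suffices to prove that this identity characterizes reflexivity.

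For the forward direction, assume $\mathcal P$ is reflexive, so each facet $F$ admits a defining inequality $\langle a_F,x\rangle \le 1$ with $a_F\in \Z^d$. For $x\in \Z^d$ the inner product $\langle a_F,x\rangle$ is an integer, so $x\in \text{int}(t\mathcal P)$ iff $\langle a_F,x\rangle < t$ for every $F$, iff $\langle a_F,x\rangle \le t-1$ for every $F$, iff $x\in (t-1)\mathcal P$. This yields the identity immediately.

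For the reverse direction, suppose $L_{\mathcal P^\circ}(t) = L_{\mathcal P}(t-1)$ for every $t\ge 1$. Taking $t=1$ shows the origin is the unique interior lattice point of $\mathcal P$. Write $\mathcal P = \{x : \langle a_i,x\rangle \le b_i\}$ with $a_i\in \Z^d$ primitive and $b_i>0$; reflexivity is the statement that $b_i=1$ for every $i$. Since $b_i>0$, one has $(t-1)\mathcal P \subseteq \text{int}(t\mathcal P)$, so the cardinality identity upgrades to the set equality $(t-1)\mathcal P\cap \Z^d = \text{int}(t\mathcal P)\cap \Z^d$ for all $t\ge 1$. If some $b_i>1$, then for large enough $t$ an integer $k$ lies in the open interval $((t-1)b_i, tb_i)$; the hyperplane $\langle a_i,x\rangle = k$ should contain a lattice point lying strictly inside all other facet inequalities of $t\mathcal P$, giving a point of $\text{int}(t\mathcal P)\setminus (t-1)\mathcal P$ and contradicting the set equality. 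Hence $b_i=1$ for all $i$, so $\mathcal P$ is reflexive. The main obstacle is making this final slab argument precise: constructing an explicit lattice point on $\langle a_i,x\rangle = k$ that strictly satisfies all other facet inequalities requires careful asymptotic control on the geometry of $t\mathcal P$ as $t\to\infty$, exploiting that distinct facet hyperplanes become well-separated at large dilates and that the level sets of a primitive integer form carry a full rank sublattice of $\Z^d$.
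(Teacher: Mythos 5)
The paper does not prove this statement; it is quoted from Hibi's work \cite{Hibi1}, so there is no internal proof to compare against. Your route is the standard combinatorial proof (essentially Theorem 4.6 of \cite{BeckRobinsCCD}): the generating-function computation correctly translates $h^*$-palindromicity into $L_{\mathcal P^\circ}(t)=L_{\mathcal P}(t-1)$, and the forward direction is complete as written. The one step you flag as an obstacle --- producing a lattice point in $\mathrm{int}(t\mathcal P)\setminus(t-1)\mathcal P$ when some facet has $\langle a_i,x\rangle\le b_i$ with $b_i\ge 2$ --- closes more cleanly than your slab description suggests: since $a_i$ is primitive there is a fixed $u\in\Z^d$ with $\langle a_i,u\rangle=1$, and since the facet $F_i$ is a $(d-1)$-dimensional lattice polytope, $\mathrm{relint}(tF_i)$ contains a lattice point $q$ for all large $t$ (Ehrhart applied to $F_i$), chosen so that its distance to the other facets of $t\mathcal P$ grows linearly in $t$. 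Then $q-u$ satisfies $\langle a_i,q-u\rangle=tb_i-1$, which exceeds $(t-1)b_i$ exactly because $b_i\ge 2$, while $q-u$ still lies strictly inside every other facet inequality for $t$ large since $u$ is fixed; this is the desired contradiction. With that substitution your argument is a complete and correct proof, though for the purposes of this paper a citation is all that is needed.
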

Thus, when investigating symmetric $h^*$-vectors, reflexive polytopes (and, more generally, Gorenstein polytopes) are the correct class to work with.
As indicated by Conjecture~\ref{conj:hibiohsugi}, the following property has been frequently correlated with unimodality, and is interesting in its own right.

\begin{definition}
A lattice polytope $\mathcal P \subseteq \R^n$ has the \emph{integer decomposition property} if, for every integer $t \in \Z_{>0}$ and for all $p \in t\mathcal P \cap \Z^n$, there exists $p_1 , \ldots , p_t \in \mathcal P \cap \Z$ such that $p = p_1 + \cdots + p_t$.
We will frequently say that $\mathcal P$ is IDP when $\mathcal P$ possesses this property.
\end{definition}

It is well-known that if $\mathcal P$ admits a unimodular triangulation, then $\mathcal P$ is IDP; we will use this fact when analyzing complete graphs.

\subsection{Lattice Simplices}

Simplices play a special role in Ehrhart theory, as there is a method for computing their $h^*$-vectors that is simple to state (though not always to apply).

\begin{definition} \label{def:fpp}
Given a lattice simplex $\mathcal{P}\subset \R^{n-1}$ with vertices $\{v_i\}_{i \in [n]}$, the \emph{fundamental parallelepiped} of $\mathcal P$ is the subset of $\cone{\mathcal P}$ defined by 
\[
\Pi_{\mathcal P} := \left\{ \sum_{i=1}^n \lambda_i (v_i, 1) \mid 0 \le \lambda_i < 1  \right\} \, .
\]
Further, $|\Pi_{\mathcal{P}}\cap \Z^n|$ is equal to the determinant of the matrix whose $i$\ss{th} row is given by $(v_i,1)$.
\end{definition}

\begin{lemma}[see Chapter 3 of \cite{BeckRobinsCCD}]\label{lem:fpp}
Given a lattice simplex $\mathcal P$, 
\[
h_i^*(\mathcal{P}) = \left\lvert \Pi_{\mathcal P} \cap \{ x \in \Z^n \mid x_{n}=i \} \right\rvert \, .
\]
\end{lemma}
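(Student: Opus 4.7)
The plan is to establish the standard tiling decomposition of $\cone(\mathcal P)$ by integer translates of the fundamental parallelepiped, use it to compute the Ehrhart series as a single rational function whose numerator is supported on $\Pi_{\mathcal P}$, and then read off the claimed identity by matching coefficients with the definition of $h^*(\mathcal P)$ given in Section~\ref{sec:ehrhart}.

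First I would prove the key structural fact: every point $p \in \cone(\mathcal P)$ has a unique expression
\[
p = q + \sum_{i=1}^n k_i (v_i, 1), \qquad q \in \Pi_{\mathcal P}, \ k_i \in \Z_{\ge 0}.
\]
Since $\mathcal P$ is a simplex, the lifted vertices $(v_1,1),\ldots,(v_n,1)$ are linearly independent, so they form an $\R$-basis of the affine span of $\cone(\mathcal P)$. Writing $p = \sum c_i (v_i,1)$ with $c_i \ge 0$ and setting $k_i := \lfloor c_i \rfloor$ and $\lambda_i := c_i - k_i$ gives existence; uniqueness follows from the linear independence. Crucially, if $p \in \Z^n$ then $q = p - \sum k_i(v_i,1)$ is also in $\Z^n$, so the decomposition restricts to a bijection between $\cone(\mathcal P) \cap \Z^n$ and $(\Pi_{\mathcal P} \cap \Z^n) \times \Z_{\ge 0}^n$.

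Next I would compute the generating function on both sides. Since $(v_i,1)$ has last coordinate $1$, the height function $p \mapsto p_n$ satisfies $p_n = q_n + k_1 + \cdots + k_n$. Summing $z^{p_n}$ over $p \in \cone(\mathcal P) \cap \Z^n$ and applying the bijection above factors as
\[
\sum_{p \in \cone(\mathcal P) \cap \Z^n} z^{p_n} \;=\; \Biggl(\sum_{q \in \Pi_{\mathcal P} \cap \Z^n} z^{q_n}\Biggr) \cdot \prod_{i=1}^n \sum_{k_i \ge 0} z^{k_i} \;=\; \frac{\sum_{q \in \Pi_{\mathcal P} \cap \Z^n} z^{q_n}}{(1-z)^n}.
\]
On the other hand, the projection onto the first $n-1$ coordinates gives a bijection between $\cone(\mathcal P) \cap \{x_n = t\} \cap \Z^n$ and $t\mathcal P \cap \Z^{n-1}$, so the left-hand side equals $\sum_{t \ge 0} L_{\mathcal P}(t) z^t = \ehr_{\mathcal P}(z)$. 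With $n = d+1$, matching this against the expression
\[
\ehr_{\mathcal P}(z) = \frac{h_d^* z^d + \cdots + h_1^* z + h_0^*}{(1-z)^{d+1}}
\]
from Section~\ref{sec:ehrhart} forces the numerators to agree, which yields $h_i^*(\mathcal P) = |\Pi_{\mathcal P} \cap \{x_n = i\} \cap \Z^n|$.

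The main obstacle, though modest, is the tiling step: one must verify both that the decomposition exists (requiring the linear independence that comes from $\mathcal P$ being a simplex, not merely a polytope) and that it preserves integrality (so the count at each height is genuinely a lattice-point count). Once the tiling is in hand, the generating-function manipulation and numerator comparison are purely formal.
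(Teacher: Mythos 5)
Your proof is correct, and it is essentially the same argument as the one the paper relies on: the paper gives no proof of its own but cites Chapter 3 of \cite{BeckRobinsCCD}, where this exact half-open tiling of $\cone{\mathcal P}$ by nonnegative integer translates of $\Pi_{\mathcal P}$, followed by the factorization of the height generating function and comparison of numerators over $(1-z)^{d+1}$, is the standard proof. Your attention to the two points that actually require the simplex hypothesis (linear independence of the lifted vertices for uniqueness of the decomposition, and integrality of the remainder $q$) is exactly where the content lies.
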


Using the notation from Definition~\ref{def:fpp}, let $A$ be the matrix whose $i$\ss{th} row is $(v_i,1)$.
One approach to determine $h^*(\mathcal{P})$ in this case is to recognize that finding lattice points in $\Pi_{\mathcal{P}}$ is equivalent to finding integer vectors of the form $\lambda \cdot A$ with $0\leq \lambda_i< 1$ for all $i$.
Cramer's rule implies the $\lambda \in \Q^n$ that yield integer vectors will have entries of the form
\[
\lambda_i = \frac{b_i}{\det{A}} < 1
\] 
for $b_i \in \Z_{\geq 0}$.
In particular, if $x = \frac{1}{\det(A)} b \cdot A \in \mathbb{Z}^n$, then $b_i = \det{A(i, x)}$ where $A(i, x)$ is the matrix obtained by replacing the $i$\ss{th} row of $A$ by $x$. 
Since $A(i, x)$ is an integer matrix, $\det{A(i,x)} \in \Z$. Notice that for any $\lambda$, the last coordinate of $\lambda A$ is $\langle \lambda, \1 \rangle = \sum_{i=1}^n \frac{b_i}{\det{A}}.$ 
Thus, we have
\[ 
\Pi_{\mathcal{P}} \cap \Z^n  =  \Z^n\cap \left\{ \frac{1}{\det{A}} b \cdot A \mid 0 \le b_i < \det(A), b_i\in \Z, \sum_{i=1}^n b_i \equiv 0 \bmod \det(A) \right\} \, . 
\]
One profitable method for determining the lattice points in $\Pi_{\mathcal{P}}$ is to find the $\det(A)$-many lattice points in the right-hand set above, by first considering all the $b$-vectors that satisfy the given modular equation.

\subsection{Graph Laplacians}\label{sec:laplacians}
Let $G$ be a connected graph with vertex set $V(G) = [n] := \{1, 2, \ldots, n\}$ and edge set $E(G)$.
The \emph{Laplacian matrix} $L$ of a graph $G$ is defined to be the difference of the degree matrix and the $\{0, 1\}$-adjacency matrix of a graph.
Thus, $L$ has rows and columns indexed by $[n]$ with entries $a_{ii}=\deg{i}$, $a_{ij} = -1$ if $\{i,j\} \in E(G)$, and $0$ otherwise.
We let $\kappa$ denote the number of spanning trees of $G$.
The following facts are well-known \cite{Bapat}.

\begin{prop} 
The Laplacian matrix $L$ of a connected graph $G$ with vertex set $[n]$ satisfies the following:
\begin{enumerate}[(i)]
\item $L \in \Z^{n \times n}$ is symmetric.
\item Each row and column sum of $L$ is $0$. 
\item $\ker_{\R}{L} = \langle \1 \rangle$ and $\text{im}_{\R}{\text{ }L}=\langle \1 \rangle^{\perp}$
\item $\rk{L} = n-1$
\item (The Matrix-Tree Theorem \cite{Kirchhoff}) Any cofactor of $L$ is equal to $\kappa$.
\end{enumerate}
\end{prop}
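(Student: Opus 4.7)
The plan is to verify each item essentially from the definitions and standard linear algebra, treating the Matrix-Tree Theorem as the one genuinely nontrivial input.

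For (i), I would note that the degree matrix $D$ is diagonal (hence symmetric) and the adjacency matrix $A$ satisfies $A_{ij} = A_{ji}$ because $\{i,j\}\in E(G)$ is an unordered condition; since $L = D - A$, symmetry follows immediately. For (ii), the $i$\ss{th} row of $L$ has the entry $\deg(i)$ in position $i$ and the entry $-1$ in each of the $\deg(i)$ positions indexed by neighbors of $i$, with all other entries $0$; these sum to $0$, and column sums agree by (i).

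For (iii), the inclusion $\langle \1 \rangle \subseteq \ker_\R L$ is exactly (ii). For the reverse, I would use the quadratic form identity
\[
x^\top L x \;=\; \sum_{\{i,j\}\in E(G)} (x_i - x_j)^2,
\]
which is the standard computation obtained by expanding $\sum_i \deg(i) x_i^2 - 2\sum_{\{i,j\}\in E(G)} x_i x_j$. Since $L$ is symmetric positive semidefinite, $Lx = 0$ iff $x^\top L x = 0$, which forces $x_i = x_j$ for every edge $\{i,j\}$. By connectedness of $G$, this forces $x$ to be constant, so $\ker_\R L = \langle \1 \rangle$. The image identification then follows from symmetry: $\text{im}_\R L = (\ker_\R L^\top)^\perp = (\ker_\R L)^\perp = \langle \1 \rangle^\perp$. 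Statement (iv) is an immediate consequence of (iii) by the rank-nullity theorem applied to $L$ on $\R^n$.

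For (v), I would simply cite Kirchhoff's Matrix-Tree Theorem as the paper already does: any cofactor of $L$, i.e.\ the determinant of $L$ with any row and the corresponding column deleted, equals $\kappa$, the number of spanning trees of $G$. The fact that \emph{any} cofactor gives the same value (rather than just the $(1,1)$ cofactor) follows from (ii) and (iii) combined with standard row/column operations: adding all other rows to a given row produces a zero row, so elementary determinant manipulations show that deleting the $i$\ss{th} row and $j$\ss{th} column yields the same value up to sign, and the diagonal cofactors all coincide. The main ``obstacle'' here is purely expository, since the Matrix-Tree Theorem itself is being invoked as a black box; the rest of the proposition reduces to the positive-semidefiniteness computation in (iii), which is the only step requiring more than direct inspection of the definition.
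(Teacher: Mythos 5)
Your proof is correct. Note, however, that the paper does not actually prove this proposition: it simply records these facts as well known, citing Bapat's book, so there is no argument to compare yours against. What you have written is the standard textbook verification --- the quadratic form identity $x^\top L x = \sum_{\{i,j\}\in E(G)}(x_i-x_j)^2$ together with connectedness for the kernel, rank--nullity for (iv), and the observation that a matrix with all row and column sums zero has all cofactors equal so that Kirchhoff's theorem for the $(1,1)$ cofactor extends to every cofactor --- and all of these steps are sound.
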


In this paper we often refer to a submatrix of $L$ defined by restricting to specified rows and columns.
For $S, T \subseteq [n]$, define $L(S \mid T)$ to be the matrix with rows from $L$ indexed by $[n]\setminus S$ and columns from $L$ indexed by $[n] \setminus T$.  
Equivalently, $L(S \mid T)$ is obtained from $L$ by the deletion of rows indexed by $S$ and columns indexed by $T$.
For simplicity, we define $L(i)$ to be the matrix obtained by deleting the $i$\ss{th} column of $L$, that is, $L(i) := L( \emptyset \mid i) \in \Z^{n \times (n-1)}$.

\section{The Laplacian Simplex of a Finite Graph}\label{sec:lapsimp}

\subsection{Definition and Basic Properties}
Assume that $G$ is a connected graph with Laplacian matrix $L$.
Consider $L(i) \in \Z^{n \times (n-1)}$.
It is a straightforward exercise to show the rank of $L(i)$ is $n-1$.
We recognize the rows of $L(i)$ as points in $\Z^{n-1}$ and consider their convex hull, $\conv{L(i)^T}$, where $\conv{A}$ refers to the convex hull of the columns of the matrix $A$.
Notice the rows of ${L(i)}$ form a collection of $n$ affinely independent lattice points, which makes $\conv{L(i)^T}$ an $n-1$ dimensional simplex.

\begin{prop}\label{equivalence} 
The lattice simplices $\conv{L(i)^T}$ and $\conv{L(j)^T}$ are unimodularly equivalent for all $i, j \in [n]$.
\end{prop}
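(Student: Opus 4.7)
The core observation is that every row of $L$ sums to $0$, so all $n$ rows $\ell_1, \ldots, \ell_n$ of $L$ lie in the hyperplane $H := \{x \in \R^n : \sum_k x_k = 0\}$, and each simplex $\conv{L(k)^T}$ is just the image of the rows of $L$ under the coordinate projection that deletes the $k$-th coordinate. So the plan is to build the equivalence by lifting back to $H$ and projecting out a different coordinate.

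First I would define, for each $k \in [n]$, the linear projection $\pi_k : \R^n \to \R^{n-1}$ that deletes the $k$-th coordinate. By construction the rows of $L(k)$ are exactly $\pi_k(\ell_1), \ldots, \pi_k(\ell_n)$, so $\conv{L(k)^T} = \conv{\pi_k(\ell_1), \ldots, \pi_k(\ell_n)}$. Next I would verify that the restriction $\pi_k|_H : H \to \R^{n-1}$ is a linear isomorphism which maps the sublattice $H \cap \Z^n$ isomorphically onto $\Z^{n-1}$. Injectivity: if $x \in H$ and $\pi_k(x) = 0$, then $x_j = 0$ for $j \ne k$ and also $x_k = -\sum_{j \ne k} x_j = 0$. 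Surjectivity, and the fact that the inverse is lattice-preserving, both follow from the explicit lift: given $y \in \Z^{n-1}$, define $x \in \Z^n$ by placing the entries of $y$ in the coordinates indexed by $[n] \setminus \{k\}$ and setting $x_k := -\sum_{j \ne k} x_j \in \Z$; then $x \in H \cap \Z^n$ and $\pi_k(x) = y$.

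Finally, I would set $T_{ij} := \pi_j \circ (\pi_i|_H)^{-1} : \R^{n-1} \to \R^{n-1}$. As a composition of two lattice isomorphisms this is a linear automorphism of $\Z^{n-1}$, hence represented by a matrix in $GL_{n-1}(\Z)$, and so in particular a lattice-preserving affine isomorphism of $\R^{n-1}$. Because each row $\ell_k$ already lies in $H$, we have $(\pi_i|_H)^{-1}(\pi_i(\ell_k)) = \ell_k$, and therefore $T_{ij}(\pi_i(\ell_k)) = \pi_j(\ell_k)$ for every $k$. Thus $T_{ij}$ carries the vertex set of $\conv{L(i)^T}$ bijectively onto the vertex set of $\conv{L(j)^T}$, which delivers the claimed unimodular equivalence. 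I do not anticipate a real obstacle here: the entire argument is bookkeeping around the fact that the sum-zero hyperplane $H$ is intrinsic to the configuration, and each coordinate projection restricts to a lattice isomorphism between $H \cap \Z^n$ and $\Z^{n-1}$.
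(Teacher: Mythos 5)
Your proof is correct, but it takes a more conceptual route than the paper's. The paper argues directly with matrices: since $L(i)$ and $L(j)$ differ in exactly one column, and the missing column of $L$ is the negative sum of the others (columns of $L$ sum to zero), one can write down an explicit $U\in\Z^{(n-1)\times(n-1)}$ with $L(i)\cdot U=L(j)$, whose columns are standard basis vectors except for one all-$(-1)$ column, and then verify $\det U=\pm 1$ by cofactor expansion. You instead exploit the dual fact that the \emph{rows} of $L$ sum to zero, so all $n$ points live in the hyperplane $H=\{x:\sum_k x_k=0\}$, and each $\conv{L(k)^T}$ is the image of this single intrinsic configuration under the coordinate-deleting projection $\pi_k$, which you check restricts to a lattice isomorphism $H\cap\Z^n\to\Z^{n-1}$; the equivalence is then the composite $\pi_j\circ(\pi_i|_H)^{-1}$, and no determinant ever needs to be computed. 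The two arguments are two coordinatizations of the same underlying fact (your $T_{ij}$ is exactly the map $x\mapsto xU$ from the paper), but yours buys a cleaner structural statement --- all the $\conv{L(k)^T}$ are presentations of one simplex in $(H, H\cap\Z^n)$ --- and sidesteps the slightly fiddly cofactor expansion, at the cost of having to verify that a linear bijection preserving $\Z^{n-1}$ is automatically in $GL_{n-1}(\Z)$, which you do correctly via the explicit integral lift.
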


\begin{proof} 
Notice the matrices $L(i)$ and $L(j)$ differ by only one column when $i \ne j$.
In particular we can write $L(i) \cdot U = L(j)$ where $U \in \Z^{n-1 \times n-1}$ has columns $c_k$ for $1 \le k \le n-1$ defined to be
\[ 
c_k = \left\{
\begin{array}{ll}
    e_{\ell} & \text{column $k$ in $L(j)$ is column $\ell$ in $L(i)$} \\
    (-1, -1, \ldots, -1)^T & \text{column $k$ in $L(j)$ is not among columns of $L(i)$ }
\end{array} \right\}
\] 
where $e_{\ell}$ is the vector with a $1$ in the $\ell\ss{th}$ entry and $0$ else.

Notice $U$ has integer entries and $\det{U} = \pm 1$, as computed by expanding along the column with all entries equal to $-1$. 
This shows $U$ is a unimodular matrix.
Further, $U$ maps the vertices of $\conv{L(i)^T}$ onto the vertices of $\conv{L(i)^T}$.
Thus $\conv{L(i)^T}$ and $\conv{L(j)^T}$ are unimodularly equivalent lattice polytopes.
\end{proof}

Given a fixed graph $G$, we choose a representative for this equivalence class of lattice simplices to be used throughout, unless otherwise noted.
Let $B = \{ e_1 - e_2, e_2-e_3, \ldots, e_{n-1} - e_n\}$ be the standard basis for the orthogonal complement of the all-ones vector $\1 \in \R^n$, where $e_i \in \R^n$ is the standard basis vector that contains a $1$ in the $i$\ss{th} entry and $0$ else.
Then $B$ is a basis of the column space of $L$.
Define $L_B \in \Z^{n \times (n-1)}$ to be the representation of the matrix $L$ with respect to the basis $B$.
In practice, $L_B$ can be computed using the matrix multiplication $L_B = L \cdot A$ where $A$ is the upper triangular $(n \times (n-1))$ matrix with entries
\begin{equation}\label{eqn:A}
a_{ij}= \left\{
\begin{array}{ll}
    1 & i \le j \le
     n-1 \\
    0 &  \text{else}
\end{array} \right\}.
\end{equation}

\begin{example}
Given the cycle $C_5$ of length five, we have
\[
L = 
\left[
\begin{array}{rrrrr}
2  & -1 & 0  & 0 & -1 \\
    -1 & 2 & -1 & 0  & 0 \\
    0  & -1& 2 & -1  & 0 \\
    0  & 0 &-1 & 2 & -1 \\
    -1 & 0& 0 & -1 & 2
\end{array}
\right] 
\hspace{.2in}
L_B = 
\left[
\begin{array}{rrrr}
    2 & 1 &  1&  1 \\
    -1& 1 &  0  & 0 \\
    0 & -1&   1  & 0 \\
    0 &  0  & -1 & 1 \\
    -1 & -1 &  -1 & -2
\end{array}
\right] \, .
\]
\end{example}

This brings us to the object of study in this paper. 

\begin{definition} 
For a connected graph $G$, the $n-1$ dimensional lattice simplex \[T_G:= \conv{(L_B)^T} \subseteq \R^{n-1}\] is called the $\emph{Laplacian Simplex}$ associated to the graph $G$.
\end{definition}

\begin{prop}\label{properties}
Let $G$ be a connected graph on $n$ vertices.
\begin{enumerate}[(i)]
\item $T_G$ is a representative of the equivalence class $\{\conv{L(i)^T}\}_{i \in [n]}$.
\item $T_G$ has normalized volume equal to $n \cdot \kappa$.
\item $T_G$ contains the origin in its interior.
\item $h_i^*(T_G) \ge 1$ for all $0 \le i \le n-1.$
\end{enumerate}
\end{prop}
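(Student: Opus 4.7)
The plan is to prove (i) by exhibiting a unimodular equivalence between $T_G = \conv{L_B^T}$ and $\conv{L(i)^T}$, and then use this to reduce (ii) and (iii) to calculations with the more symmetric representative $\conv{L(i)^T}$, while (iv) can be handled directly in $T_G$. Both simplices arise from choosing coordinates on the rank-$(n-1)$ sublattice $\Lambda := \Z^n \cap \langle \1 \rangle^\perp$ that contains every row of $L$: the map ``drop coordinate $i$'' is a unimodular isomorphism $\Lambda \to \Z^{n-1}$ carrying the rows of $L$ to the rows of $L(i)$, while the map ``express in the basis $B$'' is another such isomorphism carrying the rows of $L$ to the rows of $L_B$. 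Composing one with the inverse of the other yields the required unimodular equivalence, incidentally confirming that the formula $L_B = L \cdot A$ realizes the change of basis.

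For (ii), by (i) and Proposition~\ref{equivalence} it suffices to compute $|\det M|$ where $M := [L(i) \mid \1] \in \Z^{n \times n}$. After reordering the columns so that $\1$ occupies the original $i$-th position (incurring a sign of $(-1)^{n-i}$), I would expand along this column: each term is $(-1)^{i+j}\det(L_{ji})$, where $L_{ji}$ is the submatrix of $L$ obtained by deleting row $j$ and column $i$. By the Matrix-Tree Theorem one has $(-1)^{i+j}\det(L_{ji}) = \kappa$ for every $j$, so the sum over $j \in [n]$ equals $n\kappa$.

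For (iii), the column-sum identity $\1^T L = 0$ implies $\sum_k \text{row}_k(L(i)) = 0$, making the origin the centroid of the vertices of $\conv{L(i)^T}$; since all barycentric coordinates are then strictly positive and the simplex is full-dimensional, this places the origin in the interior, and the property transports to $T_G$ via the equivalence from (i), which fixes the origin. For (iv), I would apply Lemma~\ref{lem:fpp} directly to $T_G$: a lattice point of $\Pi_{T_G}$ at height $i$ corresponds to coefficients $\lambda_k \in [0,1)$ with $\sum_k \lambda_k = i$ and $\sum_k \lambda_k v_k \in \Z^{n-1}$, where $v_k$ is the $k$-th row of $L_B$. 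Since $\sum_k v_k = \1^T L_B = (\1^T L)\, A = 0$, choosing $\lambda_k = i/n$ uniformly is admissible precisely for $i \in \{0,1,\ldots,n-1\}$ and gives $\sum_k \lambda_k v_k = 0$, exhibiting the witness lattice point $(0,i) \in \Z^n$ and therefore $h_i^*(T_G) \ge 1$. I expect the only real nuisance to be tracking the sign conventions in the cofactor expansion of (ii); the remaining parts are essentially routine once the coordinate identification in (i) is in hand.
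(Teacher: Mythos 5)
Your proposal is correct and follows essentially the same route as the paper: a unimodular change of coordinates for (i), a cofactor expansion along the column of ones combined with the Matrix--Tree Theorem for (ii), the centroid observation $\1^T L = 0$ for (iii), and the uniform witness $\lambda_k = i/n$ in the fundamental parallelepiped for (iv). The only minor difference is that in (ii) you expand $\det[L(i)\mid\1]$ so that cofactors of $L$ appear directly, whereas the paper expands $\det[L_B\mid\1]$ and first relates its minors to those of $L$ via $L(i\mid n)\cdot U = L_B(i\mid\emptyset)$; both are sound.
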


\begin{proof}
\begin{enumerate}[(i)]
\item Notice we can write $L(n) \cdot A(n \mid \emptyset) = L_B$ where $A$ is the matrix defined in equation (\ref{eqn:A}). 
Let $U:=A(n \mid \emptyset)$.  
Then $U$ is the upper diagonal matrix of all ones so that $\det{U} = 1$.
This implies $T_G$ is unimodularly equivalent to $\conv{L(n)^T}$.
By Proposition \ref{equivalence}, the result follows.
\item Since $T_G$ is a simplex, the normalized volume of $T_G$ is equal to 
\[
\left| \det{[L_B\mid\1]} \right|= \left| \sum_{i=1}^n (-1)^{i+n} M_{in} \right| = \left| \sum_{i=1}^n C_{in} \right|,
\] 
where $M_{i,n}$ is a minor of $[L_B \mid \1]$, $C_{i,n}$ is the corresponding cofactor, and the determinant is expanded along the appended column of ones.
The relation $L(n) \cdot U = L_B$ yields $L(i \mid n) \cdot U = L_B(i \mid \emptyset).$
Then for each cofactor,
\begin{equation*}
\begin{split}
C_{i,n} &= (-1)^{i+n}\det L_B(i \mid \emptyset ) \\
&= (-1)^{i+n}\det(L(i\mid n)\cdot U) \\
&= (-1)^{i+n}\det L(i\mid n)\det U \\
&= (-1)^{i+n}\det L(i\mid n) \\
&= \bar{C}_{i,n} \\
&= \kappa 
\end{split}
\end{equation*}
where $\bar{C}_{i,n}$ is the cofactor of $L$, and the last equality is a result of the Matrix Tree Theorem.
Summing over all $i \in [n]$ yields the desired result.

\item Note the sum of all rows of $L_B$ is $0$, and $L_B$ has no column with all entries equal to $0$. It follows that $(0, \ldots, 0) \in \Z^{n}$ is in the interior of $T_G$.   

\item Observe each column in $L_B$ sums to $0$.
Consider lattice points of the form 
\[
p_i = \left(\frac{i}{n}, \frac{i}{n}, \ldots, \frac{i}{n}\right) \cdot [L_B \mid \1] = \left(0, 0, \ldots, 0, i  \right) \in \Z^{1 \times n} 
\] 
for each $0 \le i < n$. 
Then $p_i \in \Pi_{T_G} \cap \{x \in \Z^n \mid x_{n} = i\}$ implies $h_i^*(T_G) \ge 1$ for each $0 \le i \le n-1$.
\end{enumerate}
\end{proof}

\begin{example}
The simplex $T_{C_5}$ is obtained as the convex hull of the columns of the transpose of
\[
L_B = 
\left[
\begin{array}{rrrr}
    2 & 1 &  1&  1 \\
    -1& 1 &  0  & 0 \\
    0 & -1&   1  & 0 \\
    0 &  0  & -1 & 1 \\
    -1 & -1 &  -1 & -2
\end{array}
\right] \, .
\]
The determinant of $L_B$ with a column of ones appended is easily computed to be $25$.
By applying Lemma~\ref{lem:fpp} to $T_{C_5}$, it is straightforward to verify that $h^*(T_{C_5})=(1,1,21,1,1)$.
\end{example}

In the proof of (ii) in Proposition \ref{properties} above, we showed the minor obtained by deleting the $i$\ss{th} row of $L_B$ is equal to the minor obtained by deleting the $n$\ss{th} column and the $i$\ss{th} row of $L$ for some $i \in [n]$, i.e., $\det{L_B(i \mid \emptyset)} = \det{L(i \mid n)}$ for any $i \in [n]$.  
The second minors of $L_B$ and $L$ are related in the following manner, which we will need in subsequent sections.

\begin{lemma}\label{determinant} 
Let $i,k \in [n]$ and $j \in [n-1]$ such that $i \ne k$.  
Then 
\[
\det{L_B(i,k \mid j )} = \det{L(i,k \mid j,n)} + \det{L(i,k \mid j+1, n)}.
\] 
In the case $j=n-1$, $\det{L_B(i,k \mid n-1 )} = \det{L(i,k \mid n-1,n)}.$
\end{lemma}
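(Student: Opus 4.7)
The plan is to exploit the explicit factorization $L_B = L \cdot A$ from equation (\ref{eqn:A}). Because the $j$th column of $A$ has a $1$ in rows $1$ through $j$ and $0$'s elsewhere, the $j$th column of $L_B$ equals the sum of the first $j$ columns of $L$. Thus $L_B(i,k\mid j)$ is the $(n-2)\times(n-2)$ matrix whose columns, indexed in natural order by $[n-1]\setminus\{j\}$, are partial column-sums of $L$, restricted to the rows indexed by $[n]\setminus\{i,k\}$.

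First I would apply elementary column operations to recover individual columns of $L$ from these partial sums. Subtracting the $(m-1)$st retained column from the $m$th yields the $m$th column of $L$ whenever the two columns were consecutive in $L_B$; the only exception is the gap at position $j$, where subtracting the $(j-1)$st column of $L_B$ from the $(j+1)$st produces the sum of the $j$th and $(j+1)$st columns of $L$. These operations preserve the determinant, and the resulting matrix has, in their natural order, columns $1,\ldots,j-1$ of $L$, then column $j$ plus column $j+1$ of $L$, then columns $j+2,\ldots,n-1$ of $L$ (all restricted to rows $[n]\setminus\{i,k\}$). Applying multilinearity of the determinant in the single non-pure column splits the result into two determinants whose column sets are $[n]\setminus\{j+1,n\}$ and $[n]\setminus\{j,n\}$ respectively. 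In each term the columns remain in increasing order, so no sign correction is introduced, and the two terms are precisely $\det L(i,k\mid j+1,n)$ and $\det L(i,k\mid j,n)$, giving the stated identity.

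The boundary case $j=n-1$ differs only in that the deleted column is the last column of $L_B$, so every pair of retained consecutive columns is already consecutive in $L_B$. The same column operations then recover columns $1,\ldots,n-2$ of $L$ with no leftover summed column, producing the single determinant $\det L(i,k\mid n-1,n)$. The argument is essentially routine linear algebra; the only thing requiring genuine care is bookkeeping which columns of $L$ appear in each term and verifying that reordering does not introduce any sign, which it does not since we never permute columns.
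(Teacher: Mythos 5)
Your proof is correct, but it takes a genuinely different route from the paper. The paper writes $L_B(i,k\mid j) = L(i,k\mid\emptyset)\cdot A(j)$ and applies the Cauchy--Binet formula, then argues that only two column-subsets of $L(i,k\mid\emptyset)$ (one in the case $j=n-1$) contribute, because only two choices of $n-2$ rows of $A(j)$ give a nonsingular submatrix; the corresponding minors of $A(j)$ are each $1$, which yields the identity. You instead observe directly that the $j$\ss{th} column of $L_B$ is the partial sum of the first $j$ columns of $L$, undo that summation by determinant-preserving column subtractions, and then split the one ``impure'' column $c_j+c_{j+1}$ by multilinearity. Both arguments rest on the same structural fact about $A$, but yours is the more elementary and self-contained of the two: it avoids Cauchy--Binet entirely and makes the absence of sign corrections transparent, since the surviving columns stay in increasing order. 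The paper's version is shorter once Cauchy--Binet is granted, but it leaves the reader to verify which minors of $A(j)$ vanish and that the nonvanishing ones equal $+1$. The only point in your write-up that deserves an explicit word is the order of the column operations: you should perform the subtractions from the rightmost retained column leftward (or phrase them as a single multiplication by a unimodular upper-triangular matrix), so that each subtraction uses the \emph{original} previous column rather than one already modified. With that bookkeeping made explicit, the argument is complete.
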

\begin{proof}
Recall $L_B = L \cdot A$ where $A$ is the $n \times (n-1)$ upper diagonal matrix defined in equation (\ref{eqn:A}).
It follows $L_B(i, k \mid j) = L(i, k \mid \emptyset) \cdot A(j)$.
Apply the Cauchy-Binet formula to compute the determinant 
\begin{equation*}
\begin{split}
\det{L_B(i,k \mid j)} &= \sum_{S \in \binom{[n]}{n-2}} \det{L(i, k \mid \emptyset)_{[n-2],S}} \det{A(j)_{S,[n-2]}} \\
&= \det{L(i, k \mid \emptyset)_{[n-2],[n] \setminus \{j,n\}}} \det{A(j)_{[n]\setminus \{j,n\},[n-2]}}  \\ 
& \phantom{.......} + \det{L(i, k \mid \emptyset)_{[n-2],[n]\setminus \{(j+1),n\}}} \det{A(j)_{[n]\setminus \{(j+1),n\},[n-2]}} \\
&= \det{L(i,k \mid j,n)} + \det{L(i,k \mid j+1, n)}.
\end{split}
\end{equation*}


The only nonzero terms in the sum arise from choosing $(n-2)$ linearly independent rows in $A$.
Based on the structure of $A$, there are only two ways to do this unless we are in the case $j=n-1$ in which there is exactly one way.
\end{proof}

The following is a special case of a general characterization of reflexive simplices using cofactor expansions. 

\begin{theorem}\label{characterization}
For a connected graph $G$ with Laplacian matrix $L$, $T_G$ is reflexive if and only if for each $i \in [n]$, $\kappa$ divides 
\[
\sum_{k=1}^{n-1} C_{kj} = \sum_{k=1}^{n-1} (-1)^{k+j}M_{kj}
\] 
for each $1 \le j \le n-1$.
Here $C_{kj}$ is the cofactor and $M_{kj}$ is the minor of the matrix $L_B(i \mid \emptyset) \in \Z^{(n-1) \times (n-1)}$.
\end{theorem}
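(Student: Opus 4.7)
My plan is to translate reflexivity of $T_G$ into divisibility conditions on the coefficients of each facet-defining hyperplane equation, then compute those coefficients explicitly via a determinantal formula. Since $T_G$ is a full-dimensional lattice simplex containing the origin in its interior by Proposition~\ref{properties}, it is reflexive iff every vertex of $T_G^*$ lies in $\Z^{n-1}$. Each facet of $T_G$ is opposite some vertex $v_i$ (the $i$-th row of $L_B$), and the corresponding vertex of $T_G^*$ is $a^{(i)}/c^{(i)}$, where $\langle a^{(i)}, x\rangle = c^{(i)}$ is the facet equation with $c^{(i)} > 0$. This dual vertex lies in $\Z^{n-1}$ iff $c^{(i)}$ divides every coordinate of $a^{(i)}$, so I would reduce the problem to computing $(a^{(i)}, c^{(i)})$ explicitly for each $i\in[n]$.

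Fix $i\in[n]$ and set $W := L_B(i\mid \emptyset) \in \Z^{(n-1)\times(n-1)}$, whose rows $w_1,\ldots,w_{n-1}$ are the $n-1$ vertices of $T_G$ other than $v_i$. These rows are affinely independent, so the facet hyperplane is exactly $\{x\in\R^{n-1} : \det(W - \1 x^T) = 0\}$: for such $x$ the vectors $w_k - x$ are linearly dependent iff $x$ is in the affine span of the $w_k$. Applying the matrix determinant lemma to this rank-one update,
\[
\det(W - \1 x^T) \;=\; \det W \;-\; x^T\,\mathrm{adj}(W)\,\1,
\]
so the facet equation is $\langle \mathrm{adj}(W)\,\1,\, x\rangle = \det W$. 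Since $\mathrm{adj}(W)_{j,k} = (-1)^{k+j}M_{k,j} = C_{k,j}$, the $j$-th coordinate of $\mathrm{adj}(W)\,\1$ is $\sum_{k=1}^{n-1}(-1)^{k+j}M_{k,j} = \sum_{k=1}^{n-1}C_{k,j}$, precisely the quantity appearing in the theorem.

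Finally, Proposition~\ref{properties}(ii) together with its derivation via the Matrix--Tree Theorem gives $|\det W| = \kappa$, so up to sign the facet equation is $\sum_j\bigl(\sum_k(-1)^{k+j}M_{k,j}\bigr)x_j = \pm\kappa$. The dual vertex is then a lattice point iff $\kappa$ divides $\sum_k(-1)^{k+j}M_{k,j}$ for each $j=1,\ldots,n-1$. Letting $i$ range over $[n]$ accounts for all $n$ facets and yields the stated equivalence. The main step requiring care is the derivation of the facet equation through the matrix determinant lemma and the identification of its coefficients as signed sums of minors of $W$; once this dictionary is in place, the conclusion is immediate from Proposition~\ref{properties}(ii), with the sign on $\det W$ absorbed harmlessly into the divisibility condition.
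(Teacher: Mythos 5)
Your proposal is correct and follows essentially the same route as the paper: both reduce reflexivity to integrality of the vertices of $T_G^*$ and compute the vertex dual to the facet through the rows of $W = L_B(i \mid \emptyset)$ as $\mathrm{adj}(W)\,\1/\det W$, with $|\det W| = \kappa$ supplied by the Matrix--Tree computation in Proposition~\ref{properties}(ii). The only cosmetic difference is that you derive the facet equation of $T_G$ first (via the matrix determinant lemma applied to $\det(W - \1 x^T)$) and then polarize, whereas the paper writes $T_G^* = \{x \mid L_B x \le \1\}$ and solves $W v_i = \1$ directly through the adjugate; the resulting cofactor sums are identical.
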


\begin{proof}
We show $T_{G}$ is reflexive by showing the vertices of its dual polytope are lattice points. 
By \cite[Theorem 2.11]{ZieglerLectures}, the hyperplance description of the dual polytope is given by $T_{G}^* = \{x \in \R^{n-1} \mid L_B \cdot x \le \1 \}.$
Each intersection of $(n-1)$ hyperplanes will yield a unique vertex of $T_{G}^*$ since any first minor of $L_B$ is nonzero. 
Let $\{v_1, v_2, \ldots, v_{n} \}$ be the set of vertices of $T^*_{G}$.
Each $v_i$ satisfies
\[
L_B(i \mid \emptyset) v_i = \1
\]
for $i \in [n]$. 
Reindex the rows of $L_B(i \mid \emptyset)$ in increasing order by $[n-1]$. 
We can write 
\[
v_i = L_B(i \mid \emptyset)^{-1} \cdot \1
	= \frac{1}{\det{L_B(i \mid \emptyset)}} C^T \cdot \1 
\]
where $C^T$ is the $(n-1) \times (n-1)$ matrix whose whose $(j,k)$ entry is the $(k,j)$ cofactor of $L_B( i \mid \emptyset)$, which we denote as $C_{k j}$.
Since $\det{L_B(i \mid \emptyset)} = \det{L( i \mid n)} = \pm \kappa$, each vertex is of the form 
\[
v_i = \frac{1}{\pm \kappa} \left( \sum_{k=1}^{n-1} C_{k1}, \sum_{k=1}^{n-1} C_{k2}, \ldots, \sum_{k=1}^{n-1} C_{k(n-1)}\right)^T,
\]
which is a lattice point if and only if $\kappa$ divides each coordinate.
\end{proof}

\begin{remark}
Apply Lemma \ref{determinant} to Proposition \ref{characterization} to yield a condition on the second minors of $L$ when determining if $T_G$ is reflexive.
Notice
\begin{equation*}
\begin{split}
(C^T)_{j k} &= C_{k j} \\
&= (-1)^{k + j} \det{L_B( i, k \mid j )} \\
&= (-1)^{k + j} \left(\det{L(i, k \mid j, n)}+\det{L(i,k \mid j+1, n)} \right), 
\end{split}
\end{equation*}
which shows for a given $v_i$, its $\ell \ss{th}$ coordinate has the form 
\[
  \frac{1}{\pm \kappa} \sum_{k=1}^{n-1}C_{k \ell} = \frac{1}{\pm \kappa} \sum_{k=1}^{n-1} (-1)^{k + \ell} \left(\det{L(i, k \mid \ell, n)}+\det{L(i,k \mid \ell+1, n)} \right).
\]
\end{remark}

\begin{remark}
Computing alternating sums of second minors of Laplacian matrices can be challenging.
Thus, we often verify reflexivity by explicitly computing the vertices of $T_G^*$ via ad hoc methods.
\end{remark}

\subsection{Graph Operations and Laplacian Simplices}

We next introduce an operation on a graph that preserves the lattice-equivalence class of $T_G$.

\begin{prop}\label{algorithm}
Let $G$ be a connected graph on $n$ vertices such that the following cut is possible.
Partition $V(G)$ into vertex sets $A$ and $B$ such that all edges between $A$ and $B$ are incident to a single vertex $x \in A$; label those edges $\{e_1, \ldots, e_k \}$. 
Additionally suppose $x$ has a leaf with adjacent vertex $y \in A$.
Form a new graph $G'$ by moving the edges $\{e_1, \ldots, e_k\}$ previously incident to $x$ to  be incident to $y$. 
Then $G'$ has vertex set $V(G)$, and edge set $\left( E(G) \setminus \{e_1, \ldots, e_k\}\right) \cup \{\{y, v_i \}:i=1,\ldots,k\}$ where $e_i = \{x, v_i\} \in E(G)$. 
Then $T_{G} \cong T_{G'}$.
\end{prop}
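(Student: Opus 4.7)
The plan is to invoke Proposition~\ref{equivalence} to represent both $T_G$ and $T_{G'}$ as the simplices $\conv{L(x)^T}$ and $\conv{L'(x)^T}$ respectively, where $L'$ denotes the Laplacian of $G'$. It then suffices to exhibit a unimodular matrix $M \in \mathrm{GL}_{n-1}(\Z)$, with rows and columns indexed by $V(G) \setminus \{x\}$, satisfying $L(x) \cdot M = L'(x)$: such an $M$ provides a unimodular linear transformation of $\R^{n-1}$ sending the rows of $L(x)$ bijectively to those of $L'(x)$, yielding the desired equivalence of simplices.

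The matrix $M$ I would construct differs from the identity only in the entries $M_{yy} = k+1$, $M_{y, v_i} = -1$ for each $i \in \{1,\ldots,k\}$, $M_{u, y} = -1$ for every $u \in B$, and $M_{uu} = 1$ for $u \neq y$, with all unspecified entries equal to $0$. Partitioning $V(G) \setminus \{x\}$ as $\{y\} \sqcup \{v_1, \ldots, v_k\} \sqcup (B \setminus \{v_1, \ldots, v_k\}) \sqcup (A \setminus \{x, y\})$, the nontrivial portion of $M$ takes the block form
\[
\begin{pmatrix} k+1 & -\mathbf{1}_k^T & \mathbf{0}^T \\ -\mathbf{1}_k & I_k & 0 \\ -\mathbf{1}_{|B|-k} & 0 & I_{|B|-k} \end{pmatrix},
\]
and a Schur-complement calculation (taking the lower-right block $I_{|B|}$ as the invertible factor) gives $\det M = (k+1) - k = 1$, so $M$ is unimodular.

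The verification $L(x) \cdot M = L'(x)$ is a column-by-column computation resting on three structural facts: (i) $y$ is a leaf of $G$ with unique neighbor $x$, so column $y$ of $L$ equals $-e_x + e_y$; (ii) every $A$-to-$B$ edge is incident to $x$, which forces $\sum_{u \in B} L_{\cdot u} = -k e_x + \sum_{i=1}^{k} e_{v_i}$ because all $A$-rows other than $x$'s vanish on $B$-columns; and (iii) passing from $L$ to $L'$ alters only the columns indexed by $\{x, y, v_1, \ldots, v_k\}$. Using (i) and (ii), column $y$ of $L(x) \cdot M$ evaluates to $(k+1) L_{\cdot y} - \sum_{u \in B} L_{\cdot u} = -e_x + (k+1) e_y - \sum_i e_{v_i}$, exactly column $y$ of $L'$. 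Column $v_i$ of $L(x) \cdot M$ equals $-L_{\cdot y} + L_{\cdot v_i}$, which rearranges to column $v_i$ of $L'$. All other columns of $M$ are identity columns, and by (iii) the corresponding columns of $L$ and $L'$ coincide, so those columns agree automatically.

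The main difficulty lies in identifying the correct form of $M$. The naive candidate supported only on the $(k+1) \times (k+1)$ block indexed by $\{y, v_1, \ldots, v_k\}$ is adequate when the $v_i$'s have no further neighbors in $B$, but it fails in general because $\sum_i L_{\cdot v_i}$ then picks up spurious $e_w$ contributions for $w \in B \setminus \{v_1, \ldots, v_k\}$. The fix, namely extending the $-1$'s in column $y$ of $M$ to \emph{every} row indexed by $B$, is forced by the cut condition: only with this extension does column $y$ of $L(x) \cdot M$ subtract the full sum $\sum_{u \in B} L_{\cdot u}$, which the cut hypothesis makes telescope to the clean vector $-k e_x + \sum_i e_{v_i}$ required to produce column $y$ of $L'$.
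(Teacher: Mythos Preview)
Your argument is correct. The matrix $M$ you construct is unimodular, and the verification $L(x)\cdot M = L'(x)$ goes through exactly as you describe; the cut hypothesis is used precisely where you say, to make $\sum_{u\in B} L_{\cdot u}$ collapse to $-k e_x + \sum_i e_{v_i}$.

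The paper takes the transposed route: it performs \emph{row} operations on the full Laplacian, producing a unimodular $U\in\Z^{n\times n}$ with $U\cdot L = L'$, and then concludes $\conv{L(n)^T}\cong\conv{L'(n)^T}$. Because $L$ and $L'$ are symmetric, $U\cdot L = L'$ is equivalent to $L\cdot U^T = L'$, and your $M$ is exactly $U^T$ with the $x$-indexed row and column deleted (this works cleanly since $U_{jx}=0$ for $j\neq x$). So the two arguments are essentially the same computation viewed from opposite sides of the Laplacian. Your column-operation version has the advantage that it directly exhibits a unimodular map on the ambient $\R^{n-1}$ carrying vertices of $T_G$ to vertices of $T_{G'}$; the paper's row-operation version leaves that final passage (from $U\cdot L(n)=L'(n)$ to lattice equivalence of the simplices) implicit, relying tacitly on the symmetry of $L$. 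Your choice to delete column $x$ rather than a generic column is also a small but helpful simplification, since it makes the restriction of $U^T$ clean.
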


\begin{proof}
Label the vertices of $G$ with $[n]$.
Observe $G'$ has the same labels since $V(G) = V(G')$.
We refer to each vertex by its label for simplicity.
Let $N_G(i)$ be the set of neighbors of vertex $i$ in $G$, that is, $N_G(i) := \{ j \in V(G) \mid \{i, j\} \in E(G) \}$.
Let $L$ be the Laplacian matrix of $G$ and $L'$ be the Laplacian matrix of $G'$.
We describe row operations that take each row $r_i \in L$ to row $r'_i \in L'$.
For each $i \in V(G)$, $1 \le i \le n$, we have the following cases.

Consider $i \in A$ such that $i \ne x, y$.
Then $N_G(i) = N_{G'}(i)$, so we set $r'_i=r_i$ since the $i\ss{th}$ row is the same in $L$ and $L'$. Then $r'_i \in L'$.

Consider $i \in B \setminus N_{G'}(x)$.
Again, $N_G(i) = N_{G'}(i)$, so we set $r'_i = r_i$ and have $r'_i \in L'$. 

Consider $i \in B \cap N_{G'}(x)$.
The degree of $i$ is constant in $G$ and $G'$, but $\{i, x\} \in E(G)$ becomes $\{i, y\} \in E(G')$ in the described algorithm.
Set $r'_i = r_i - r_y$ to reflect the change in incident edges of $i$ from $G$ to $G'$.
Since $y \in V(G)$ is a leaf, $r'_i$ now has a $0$ in the $x\ss{th}$ coordinate, a $-1$ in the $y\ss{th}$ coordinate, and all remaining coordinates are unchanged. Then $r'_i \in L'.$

Consider $i = x$. 
Set $r'_x = r_x + \sum_{j \in B} r_j$.
Observe $N_G(x) \setminus N_{G'}(x) = \{v_1, \ldots, v_k \}$.
Then adding $\sum_{\ell =1}^{k} r_{v_\ell}$ decreases the $x\ss{th}$ coordinate of $r_x$ by $k$, which is the new degree of vertex $x \in V(G')$.
Adding the other rows does contribute to the $x\ss{th}$ coordinate of $r'_x$ since those vertices are not adjacent to $x \in V(G)$; however, we must add all rows corresponding to $j \in B$ to obtain a $0$ in all coordinates indexed by $j \in B$.
Notice the coordinates indexed by the vertices in $A$ remain fixed.
Then $r'_x \in L'$.

Finally consider $i=y$.
Set $r'_y = (k+1)r_y -\sum_{j \in B} r_j$.
The $y\ss{th}$ coordinate of $r'_y$ is $k+1$, which is the degree of $y$ in $V(G')$.
Observe $N_{G'}(y) \setminus N_G(y) = \{v_1, \ldots, v_k\}.$ 
Then subtracting $\sum_{\ell =1}^k r_{v_\ell}$ from $(k+1)r_y$ ensures the $x\ss{th}$ coordinate of $r'_y$ is $-1$. 
We subtract all rows corresponding to $j \in B$ from $(k+1)r_y$ to obtain a $-1$ in all coordinates of $r'_y$ indexed by $\{v_\ell \}_{\ell=1}^k$.
Then $r'_y \in L'$.

It is straightforward to verify that the collection of row operations described above is a unimodular transformation of the Laplacian matrix and thus can be represented by the multiplication of unimodular matrix $U \in \Z^{n \times n}$ such that $U\cdot L = L'$.
It follows that $U\cdot L(n) = L'(n)$.
Thus  $\conv{L(n)^T}= \conv{L'(n)^T}$, and we have shown $T_G \cong T_{G'}$. 
\end{proof}

\begin{example}\label{5} 
In the figure below, the graph on the left is the wedge of $K_5$ and $C_5$ with a leaf, and the graph on the right is the bridge of $K_5$ and $C_5$ with the appropriate labels.
\definecolor{zzttqq}{rgb}{0.6,0.2,0.}
\begin{center}
\begin{tikzpicture}[scale=.5][line cap=round,line join=round,>=triangle 45,x=1.0cm,y=1.0cm]
\clip(7.5,0.5) rectangle (16.2,8.);
\fill[color=zzttqq] (10.,-4.) -- (10.,-6.) -- (11.902113032590307,-6.618033988749895) -- (13.077683537175254,-5.) -- (11.902113032590307,-3.381966011250105) -- cycle;
\fill(4.854972311947651,-2.95708556678168) -- (4.854972311947651,-4.95708556678168) -- (6.757085344537957,-5.575119555531575) -- (7.932655849122904,-3.957085566781681) -- (6.757085344537959,-2.3390515780317855) -- cycle;
\fill[color=white](12.,4.) -- (12.,2.) -- (13.902113032590307,1.381966011250105) -- (15.077683537175254,3.) -- (13.902113032590307,4.618033988749895) -- cycle;
\draw (10.,-4.)-- (10.,-6.);
\draw (10.,-6.)-- (11.902113032590307,-6.618033988749895);
\draw (11.902113032590307,-6.618033988749895)-- (13.077683537175254,-5.);
\draw (13.077683537175254,-5.)-- (11.902113032590307,-3.381966011250105);
\draw (11.902113032590307,-3.381966011250105)-- (10.,-4.);
\draw (4.854972311947651,-2.95708556678168)-- (4.854972311947651,-4.95708556678168);
\draw (4.854972311947651,-4.95708556678168)-- (6.757085344537957,-5.575119555531575);
\draw (6.757085344537957,-5.575119555531575)-- (7.932655849122904,-3.957085566781681);
\draw (7.932655849122904,-3.957085566781681)-- (6.757085344537959,-2.3390515780317855);
\draw (6.757085344537959,-2.3390515780317855)-- (4.854972311947651,-2.95708556678168);
\draw (12.,4.)-- (12.,2.);
\draw (12.,2.)-- (13.902113032590307,1.381966011250105);
\draw (13.902113032590307,1.381966011250105)-- (15.077683537175254,3.);
\draw (15.077683537175254,3.)-- (13.902113032590307,4.618033988749895);
\draw (13.902113032590307,4.618033988749895)-- (12.,4.);
\draw (10.,2.)-- (12.,4.);
\draw (7.932655849122904,-3.957085566781681)-- (10.,-4.);
\draw (13.902113032590307,4.618033988749895)-- (12.,2.);
\draw (15.077683537175254,3.)-- (12.,4.);
\draw (13.902113032590307,4.618033988749895)-- (13.902113032590307,1.381966011250105);
\draw (15.077683537175254,3.)-- (12.,2.);
\draw (13.902113032590307,1.381966011250105)-- (12.,4.);
\draw (10.,-4.)-- (13.077683537175254,-5.);
\draw (10.,-4.)-- (11.902113032590307,-6.618033988749895);
\draw (11.902113032590307,-3.381966011250105)-- (10.,-6.);
\draw (11.902113032590307,-3.381966011250105)-- (11.902113032590307,-6.618033988749895);
\draw (10.,-6.)-- (13.077683537175254,-5.);
\draw (10.097886967409693,6.618033988749895)-- (8.922316462824746,5.);
\draw (8.922316462824746,5.)-- (10.097886967409693,3.381966011250105);
\draw (10.097886967409693,3.381966011250105)-- (12.,4.);
\draw (12.,6.)-- (10.097886967409693,6.618033988749895);
\draw (12.,4.)-- (12.,6.);
\begin{scriptsize}
\draw [fill=black] (10.,-4.) circle (2.5pt);
\draw[color=black] (9.79819918683186,-3.087469617134429) node {10};
\draw [fill=black] (10.,-6.) circle (2.5pt);
\draw[color=black] (9.29899178703002,-5.9011840523811605) node {8};
\draw [fill=black] (11.902113032590307,-6.618033988749895) circle (2.5pt);
\draw[color=black] (12.47576614940536,-6.491156433965153) node {7};
\draw [fill=black] (13.077683537175254,-5.) circle (2.5pt);
\draw[color=black] (13.655710912573344,-4.902769252777482) node {6};
\draw [fill=black] (11.902113032590307,-3.381966011250105) circle (2.5pt);
\draw[color=black] (12.47576614940536,-2.8151746717879713) node {5};
\draw [fill=black] (4.854972311947651,-2.95708556678168) circle (2.5pt);
\draw[color=black] (4.170770316338401,-2.8605571626790476) node {2};
\draw [fill=black] (4.854972311947651,-4.95708556678168) circle (2.5pt);
\draw[color=black] (4.080005334556248,-4.948151743668558) node {3};
\draw[color=black] (7.256779696931588,-1.7259948904021396) node {1};
\draw [fill=black] (6.757085344537957,-5.575119555531575) circle (2.5pt);
\draw[color=black] (6.076834933763605,-5.810419070599008) node {4};
\draw [fill=black] (7.932655849122904,-3.957085566781681) circle (2.5pt);
\draw[color=black] (8.255194496535267,-3.132852108025505) node {9};
\draw [fill=black] (6.757085344537959,-2.3390515780317855) circle (2.5pt);
\draw [fill=black] (12.,4.) circle (2.5pt);
\draw[color=black] (12.339618676732131,4.854466288803926) node {$9$};
\draw [fill=black] (12.,2.) circle (2.5pt);
\draw[color=black] (11.431968858910606,1.7684569082107366) node {$8$};
\draw [fill=black] (13.902113032590307,1.381966011250105) circle (2.5pt);
\draw[color=black] (14.56336073039487,1.405396981082126) node {$7$};
\draw [fill=black] (15.077683537175254,3.) circle (2.5pt);
\draw[color=black] (15.561775529998547,3.674521525635942) node {$6$};
\draw [fill=black] (13.902113032590307,4.618033988749895) circle (2.5pt);
\draw[color=black] (14.20030080326626,5.444438670387918) node {$5$};
\draw [fill=black] (10.097886967409693,6.618033988749895) circle (2.5pt);
\draw[color=black] (10.433554059306928,7.441268269595276) node {$2$};
\draw [fill=black] (8.922316462824746,5.) circle (2.5pt);
\draw[color=black] (8.436724460099573,5.762116106625452) node {$3$};
\draw [fill=black] (10.097886967409693,3.381966011250105) circle (2.5pt);
\draw[color=black] (9.43513925970325,3.4929915620716363) node {$4$};
\draw [fill=black] (10.,2.) circle (2.5pt);
\draw[color=black] (9.026696841683563,2.2222818171214995) node {$10$};
\draw [fill=black] (12.,6.) circle (2.5pt);
\draw[color=black] (12.248853694949979,6.8512958880112835) node {$1$};
\end{scriptsize}
\end{tikzpicture} 
\begin{tikzpicture}[scale=.5][line cap=round,line join=round,>=triangle 45,x=1.0cm,y=1.0cm]
\clip(3.5,-8.) rectangle (15.,-1.);
\fill[color=white] (10.,-4.) -- (10.,-6.) -- (11.902113032590307,-6.618033988749895) -- (13.077683537175254,-5.) -- (11.902113032590307,-3.381966011250105) -- cycle;
\fill[color=white](4.854972311947651,-2.95708556678168) -- (4.854972311947651,-4.95708556678168) -- (6.757085344537957,-5.575119555531575) -- (7.932655849122904,-3.957085566781681) -- (6.757085344537959,-2.3390515780317855) -- cycle;
\fill[color=white] (12.,4.) -- (12.,2.) -- (13.902113032590307,1.381966011250105) -- (15.077683537175254,3.) -- (13.902113032590307,4.618033988749895) -- cycle;
\draw (10.,-4.)-- (10.,-6.);
\draw (10.,-6.)-- (11.902113032590307,-6.618033988749895);
\draw (11.902113032590307,-6.618033988749895)-- (13.077683537175254,-5.);
\draw (13.077683537175254,-5.)-- (11.902113032590307,-3.381966011250105);
\draw (11.902113032590307,-3.381966011250105)-- (10.,-4.);
\draw (4.854972311947651,-2.95708556678168)-- (4.854972311947651,-4.95708556678168);
\draw (4.854972311947651,-4.95708556678168)-- (6.757085344537957,-5.575119555531575);
\draw (6.757085344537957,-5.575119555531575)-- (7.932655849122904,-3.957085566781681);
\draw (7.932655849122904,-3.957085566781681)-- (6.757085344537959,-2.3390515780317855);
\draw (6.757085344537959,-2.3390515780317855)-- (4.854972311947651,-2.95708556678168);
\draw (12.,4.)-- (12.,2.);
\draw (12.,2.)-- (13.902113032590307,1.381966011250105);
\draw (13.902113032590307,1.381966011250105)-- (15.077683537175254,3.);
\draw (15.077683537175254,3.)-- (13.902113032590307,4.618033988749895);
\draw (13.902113032590307,4.618033988749895)-- (12.,4.);
\draw (10.,2.)-- (12.,4.);
\draw (7.932655849122904,-3.957085566781681)-- (10.,-4.);
\draw (13.902113032590307,4.618033988749895)-- (12.,2.);
\draw (15.077683537175254,3.)-- (12.,4.);
\draw (13.902113032590307,4.618033988749895)-- (13.902113032590307,1.381966011250105);
\draw (15.077683537175254,3.)-- (12.,2.);
\draw (13.902113032590307,1.381966011250105)-- (12.,4.);
\draw (10.,-4.)-- (13.077683537175254,-5.);
\draw (10.,-4.)-- (11.902113032590307,-6.618033988749895);
\draw (11.902113032590307,-3.381966011250105)-- (10.,-6.);
\draw (11.902113032590307,-3.381966011250105)-- (11.902113032590307,-6.618033988749895);
\draw (10.,-6.)-- (13.077683537175254,-5.);
\draw (10.097886967409693,6.618033988749895)-- (8.922316462824746,5.);
\draw (8.922316462824746,5.)-- (10.097886967409693,3.381966011250105);
\draw (10.097886967409693,3.381966011250105)-- (12.,4.);
\draw (12.,6.)-- (10.097886967409693,6.618033988749895);
\draw (12.,4.)-- (12.,6.);
\begin{scriptsize}
\draw [fill=black] (10.,-4.) circle (2.5pt);
\draw[color=black] (9.79819918683186,-3.087469617134429) node {10};
\draw [fill=black] (10.,-6.) circle (2.5pt);
\draw[color=black] (9.29899178703002,-5.9011840523811605) node {8};
\draw [fill=black] (11.902113032590307,-6.618033988749895) circle (2.5pt);
\draw[color=black] (12.47576614940536,-6.491156433965153) node {7};
\draw [fill=black] (13.077683537175254,-5.) circle (2.5pt);
\draw[color=black] (13.655710912573344,-4.902769252777482) node {6};
\draw [fill=black] (11.902113032590307,-3.381966011250105) circle (2.5pt);
\draw[color=black] (12.47576614940536,-2.8151746717879713) node {5};
\draw [fill=black] (4.854972311947651,-2.95708556678168) circle (2.5pt);
\draw[color=black] (4.170770316338401,-2.8605571626790476) node {2};
\draw [fill=black] (4.854972311947651,-4.95708556678168) circle (2.5pt);
\draw[color=black] (4.080005334556248,-4.948151743668558) node {3};
\draw[color=black] (7.256779696931588,-1.7259948904021396) node {1};
\draw [fill=black] (6.757085344537957,-5.575119555531575) circle (2.5pt);
\draw[color=black] (6.076834933763605,-5.810419070599008) node {4};
\draw [fill=black] (7.932655849122904,-3.957085566781681) circle (2.5pt);
\draw[color=black] (8.255194496535267,-3.132852108025505) node {9};
\draw [fill=black] (6.757085344537959,-2.3390515780317855) circle (2.5pt);
\draw [fill=black] (12.,4.) circle (2.5pt);
\draw[color=black] (12.339618676732131,4.854466288803926) node {$9$};
\draw [fill=black] (12.,2.) circle (2.5pt);
\draw[color=black] (11.431968858910606,1.7684569082107366) node {$8$};
\draw [fill=black] (13.902113032590307,1.381966011250105) circle (2.5pt);
\draw[color=black] (14.56336073039487,1.405396981082126) node {$7$};
\draw [fill=black] (15.077683537175254,3.) circle (2.5pt);
\draw[color=black] (15.561775529998547,3.674521525635942) node {$6$};
\draw [fill=black] (13.902113032590307,4.618033988749895) circle (2.5pt);
\draw[color=black] (14.20030080326626,5.444438670387918) node {$5$};
\draw [fill=black] (10.097886967409693,6.618033988749895) circle (2.5pt);
\draw[color=black] (10.433554059306928,7.441268269595276) node {$2$};
\draw [fill=black] (8.922316462824746,5.) circle (2.5pt);
\draw[color=black] (8.436724460099573,5.762116106625452) node {$3$};
\draw [fill=black] (10.097886967409693,3.381966011250105) circle (2.5pt);
\draw[color=black] (9.43513925970325,3.4929915620716363) node {$4$};
\draw [fill=black] (10.,2.) circle (2.5pt);
\draw[color=black] (9.026696841683563,2.2222818171214995) node {$10$};
\draw [fill=black] (12.,6.) circle (2.5pt);
\draw[color=black] (12.248853694949979,6.8512958880112835) node {$1$};
\end{scriptsize}
\end{tikzpicture}
\end{center}
In the graph on the left, let $A=\{1,2,3,4,9,10\}$, let $x=9$ and let $y=10$.
It is straightforward to verify that with this assignment, the graphs above are related via Proposition \ref{algorithm}, and thus their respective Laplacian simplices are lattice equivalent.
\end{example}

\begin{remark}
It is not obvious which graph operations, aside from the transformations detailed in the proof of Proposition~\ref{algorithm} and those found in Proposition~\ref{tail}, will result in unimodularly equivalent Laplacian simplices. 
It would be interesting to investigate this phenomenon further.
\end{remark}

We next provide in Theorem~\ref{thm:bridge} an operation on graphs that preserves reflexivity of Laplacian simplices.
We will require the following lemma.

\begin{lemma}\label{keylemma} 
Let $A \in \Z^{k \times k}$.
If $\left(\det{A}\right)$ divides $mC_{k i}$ for each $i \in [k]$, where  $C_{k i}$ is the cofactor of $A$, and $Ax = \1$ has an integer solution $x \in \Z^{k}$, then $Aw = [1, \ldots, 1, 1+m]^T$ has an integer solution $w \in \Z^k$.
\end{lemma}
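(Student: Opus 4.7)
The plan is to reduce this to an application of Cramer's rule. Let $x \in \Z^k$ be the given integer solution to $Ax = \1$. Since $[1,\ldots,1,1+m]^T = \1 + m e_k$, where $e_k$ is the $k$\ss{th} standard basis vector, I would look for $w$ of the form $w = x + y$, so that the question reduces to showing that the equation $Ay = m e_k$ has an integer solution $y \in \Z^k$. This reformulation is useful because the right-hand side $m e_k$ has only one nonzero entry, which makes the Cramer's rule determinants collapse cleanly.

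Next, I would apply Cramer's rule to $Ay = m e_k$: for each $i \in [k]$,
\[
y_i \;=\; \frac{\det A_i}{\det A},
\]
where $A_i$ is the matrix obtained from $A$ by replacing its $i$\ss{th} column with $m e_k$. Expanding $\det A_i$ along the $i$\ss{th} column, the only surviving term comes from the entry $m$ in row $k$, yielding
\[
\det A_i \;=\; m \cdot (-1)^{k+i} M_{ki} \;=\; m\,C_{ki},
\]
since the minor obtained by deleting row $k$ and column $i$ of $A_i$ is the same as the minor obtained by deleting row $k$ and column $i$ of $A$ (the replaced column is discarded). Therefore $y_i = mC_{ki}/\det A$.

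By the hypothesis, $\det A$ divides $mC_{ki}$ for every $i \in [k]$, so each $y_i \in \Z$, hence $y \in \Z^k$. Setting $w := x + y \in \Z^k$ gives
\[
Aw \;=\; Ax + Ay \;=\; \1 + m e_k \;=\; [1,1,\ldots,1,1+m]^T,
\]
as required. There is no real obstacle here; the essential content is just recognizing that adjusting the right-hand side by $m e_k$ corresponds, via Cramer's rule, to adjusting each coordinate of the solution by $mC_{ki}/\det A$, which is exactly what the divisibility hypothesis is tailored to control. The role of the assumption that $Ax = \1$ has an integer solution is simply to supply the base point around which this one-column perturbation is applied.
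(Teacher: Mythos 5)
Your proposal is correct and follows essentially the same route as the paper: both decompose $w = x + y$ with $Ay = m e_k$ and conclude $y_i = mC_{ki}/\det A \in \Z$ from the divisibility hypothesis, the only cosmetic difference being that you invoke Cramer's rule directly while the paper writes $A^{-1} = \frac{1}{\det A}C^T$, which is the same computation.
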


\begin{proof}
Notice we can write
\[
Aw = A(x+y) = Ax + Ay= \begin{bmatrix} 
1 \\
\vdots \\
1 \\
1+m
\end{bmatrix}
= \begin{bmatrix} 
1 \\
\vdots \\
1 \\
1 \\
\end{bmatrix} + \begin{bmatrix} 
0 \\
\vdots \\
0 \\
m
\end{bmatrix}.
\]
Solving the system $Ay = [0, \ldots, 0, m]^T$ yields
\[
y = A^{-1} \cdot \begin{bmatrix}
0 \\
\vdots \\
0 \\
m \\
\end{bmatrix} = \frac{1}{\det{A}} C^T \cdot \begin{bmatrix}
0 \\
\vdots \\
0 \\
m \\
\end{bmatrix} = \frac{m}{\det{A}} \begin{bmatrix}
C_{k 1} \\
C_{k 2} \\
\vdots \\
C_{k k} \\
\end{bmatrix} 
\]
in which $C_{k i}$ is the cofactor of $A$. 
The above is an integer for each $i \in [k]$ by assumption.
Set $w_j = x_j + y_j \in \Z$, and the result follows.
\end{proof}

We apply Lemma \ref{keylemma} when considering a connected graph $G$ on $m=n$ vertices with $A = L_B(i \mid \emptyset)$ for any $i \in [n]$.
Here $\det{L_B(i \mid \emptyset)}= \pm \kappa$.
Observe in this case the condition $Ax = \1$ for all $i \in [n]$ is equivalent to $T_G$ being a reflexive Laplacian simplex.

\begin{theorem}\label{thm:bridge}
Let $G$ and $G'$ be graphs with vertex set $[n]$ such that $T_G$ and $T_{G'}$ are reflexive.
Suppose $\kappa_G$ divides $nM_{ij}$ and $\kappa_{G'}$ divides $nM_{ij}'$ for all $i,j \in [n-1]$, where $M_{ij} = \det{L_B(i,n \mid j)}$ with $L$ as the Laplacian matrix of $G$, and $M_{ij}'$ is defined similarly. 
Let $H$ be the graph formed by $G$ and $G'$ with $V(H)= V(G) \uplus V(G')$ and $E(H)= E(G) \uplus E(G') \uplus \{i,i'\}$ where $i \in V(G)$ and $i' \in V(G')$. 
Then $T_{H}$ is reflexive.
\end{theorem}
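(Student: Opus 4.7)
I will show $T_H$ is reflexive by establishing that every vertex of $T_H^*$ is a lattice point, which by the logic underlying Theorem~\ref{characterization} reduces to a divisibility statement modulo $\kappa_H$. The hypotheses on $n M_{ij}$ and $n M_{ij}'$ will be absorbed through an application of Lemma~\ref{keylemma}.

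\medskip

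\emph{Step 1 (Block decomposition).} Relabel so that the bridge joins vertex $n \in V(G)$ to vertex $2n \in V(H) = [2n]$ (the latter being vertex $n$ of $V(G')$). Writing $L_H$ as a $2 \times 2$ block matrix,
\[
L_H = \begin{pmatrix} L_G + e_n e_n^{\top} & -e_n e_n^{\top} \\ -e_n e_n^{\top} & L_{G'} + e_n e_n^{\top} \end{pmatrix},
\]
with $e_n \in \R^n$ in each block. The Matrix-Tree Theorem, or a direct spanning-tree count using that the bridge lies in every spanning tree of $H$, yields $\kappa_H = \kappa_G \kappa_{G'}$.

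\medskip

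\emph{Step 2 (Reduction to a linear system).} By Proposition~\ref{equivalence}, $T_H$ is lattice equivalent to $\conv{L_H(n)^{\top}}$, obtained by deleting the $n$th column of $L_H$. The vertices of its dual are the solutions $\nu_u$ of
\[
L_H(u \mid n)\,\nu_u = \1 \quad \text{for } u \in [2n] \setminus \{n\},
\]
and we must show $\nu_u \in \Z^{2n-1}$ for each such $u$. Since $\det L_H(u \mid n) = \pm \kappa_H = \pm \kappa_G \kappa_{G'}$, this is a divisibility assertion modulo $\kappa_G \kappa_{G'}$.

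\medskip

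\emph{Step 3 (Decoupling the bridge).} With the column of the bridge endpoint in $V(G)$ deleted, the matrix $L_H(u \mid n)$ acquires a favorable block structure: one of the two off-diagonal blocks is zero because its single bridge entry sat in the deleted column. Splitting $\nu_u = (\nu^G, \nu^{G'})$ according to the $V(G)/V(G')$ partition, the system decouples into a pair of smaller systems. When $u \in V(G')$, the equation for $\nu^G$ is standalone (a system for a minor of $L_G$), and the equation for $\nu^{G'}$ takes the form $\widetilde{L}_{G'} \, \nu^{G'} = \1 + m\, e_n$ for an integer $m$ obtained from $\nu^G$ via the bridge; symmetrically, when $u \in V(G)$ the roles of $G$ and $G'$ interchange.

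\medskip

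\emph{Step 4 (Applying Lemma~\ref{keylemma}).} Cramer's rule applied to the decoupled $G$-system, combined with the Matrix-Tree Theorem evaluating first minors of $L_G$ as $\pm \kappa_G$, identifies $m$ as an integer that is a multiple of $n$. Given this, Lemma~\ref{keylemma} applied with $A$ equal to the coefficient matrix of the $\nu^{G'}$-system guarantees an integer solution to the perturbed system, provided $\kappa_{G'} \mid m \cdot C_{ki}$ for each relevant cofactor $C_{ki}$ of $A$. By Lemma~\ref{determinant}, these cofactors are sums of the form $\pm(\det L_{G'}(i,k \mid j, n) + \det L_{G'}(i,k \mid j+1, n))$, i.e., sums of the $M_{ij}'$. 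Since $m \in n\Z$, the hypothesis $\kappa_{G'} \mid n\,M_{ij}'$ delivers the required divisibility. A symmetric argument, using $\kappa_G \mid n\,M_{ij}$, handles the case $u \in V(G)$. Thus every $\nu_u$ is integer, and $T_H$ is reflexive.

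\medskip

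\emph{Main obstacle.} The technical heart of the argument is Step~3, specifically the identification that the coupling constant $m$ carries an explicit factor of $n$. This factor should arise because Cramer's rule forces us to sum the $n$ rows of the $G$-block (or $G'$-block) against $\1$, which interacts with the zero row sums of a Laplacian; handling the arithmetic of this cancellation while extracting the correct cofactors of $L_{G,B}$ requires care. Once the factor $n$ is made explicit, the hypotheses $\kappa_G \mid n\,M_{ij}$ and $\kappa_{G'} \mid n\,M_{ij}'$ are precisely the strength needed for Lemma~\ref{keylemma} to close the argument.
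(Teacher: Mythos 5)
Your proposal follows essentially the same route as the paper's proof: block-decompose the Laplacian of the bridged graph, exploit the zero column sums to decouple the linear system defining each dual vertex, identify the coupling constant contributed by the bridge as $\pm n$, and close with Lemma~\ref{keylemma} together with the divisibility hypotheses (translated through Lemma~\ref{determinant}). The one place your plan as written would snag is Step~3: in your coordinates (the raw Laplacian with the column of the bridge endpoint of $G$ deleted), the perturbed system has the single-coordinate form $\1 + m e_n$ only when the deleted row lies in $V(G)$; when it lies in $V(G')$, the coupling value $\nu^{G'}_n = -n$ must be substituted back through the entire $n$-th column of $L_{G'}$, so the right-hand side of the residual $G'$-system is $\1$ plus $n$ times a vector supported on all neighbors of the bridge vertex in $G'$, and Lemma~\ref{keylemma} does not apply verbatim. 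This is repairable in two ways: either generalize the lemma to perturbations $m\,w$ for an integer vector $w$ (the extra cofactor sums $\sum_k C_{k\ell}(L_{G'})_{k,n}$ collapse to $\pm\kappa_{G'}$ by the Matrix-Tree theorem, so only the $e_n$-component survives the divisibility test and the stated hypothesis still suffices), or do what the paper does, namely work with the basis-$B$ representation $L_B$ and apply explicit unimodular row operations (adding all rows of one block to the two bridge rows) that concentrate the entire coupling into a single entry, producing the right-hand side $(1,\ldots,1,n+1,n,1,\ldots,1)^T$ so that Lemma~\ref{keylemma} applies literally with $m=n$.
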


\begin{proof}
To show $T_H$ is reflexive, we show $T_H^*$ is a lattice simplex.
Label the vertices of $H$ such that $V(G) = [n]$, $V(G')=[2n] \setminus [n]$. 
Let $L_B$, $L_B(G)$, and $L_B(G')$ be the Laplacian matrices with basis $B$ of the graphs $H$, $G$ and $G'$, respectively. 
Then $L_B$ is of the form
\[
\left[
\begin{array}{ccc|r|ccc}
     &  &  & 0 & & & \\
    & L_B(G)  &  & \vdots &  & 0 &   \\
     & &  &  0 &  & & \\
     & & &  1 &  & &\\ \cline{1-3} \cline{5-7}
     & 	& &	-1 &  & &\\  
      & 0 &  & 0 &  & L_B(G')  &\\
      &  & & \vdots  &  & & \\
     & &  &  0 & & & \\
\end{array}
\right].
\]
For $1 \le i \le 2n$, the vertex $v_i$ of $T_H^*$ is the solution to $L_B(i \mid \emptyset)v_i = \1$.
We consider two cases: $i \in [n-1]$ and $i = n$.
The cases $i =n+1$ and $i \in [2n] \setminus [n+1]$ follow without loss of generality.

First suppose $i \in [n-1]$. 
Then $L_B(i \mid \emptyset) v_i = \1$ can be solved the following way.
Multiply each side of the equation on the left by the $(2n-1) \times (2n-1)$ unimodular matrix
\[
\left[
\begin{array}{ccc|cc|cccc}
     &  &  & 0 & 0 & & & &\\
    & I_{n-2} & & \vdots & \vdots &  & 0 & &   \\
     & &  &  0 & 0 & & & &\\ \cline{1-3}
     & & &  1 & 1 & 1 & \cdots &\cdots & 1 \\  
     & 	& &	0 & 1 & 1 & \cdots & \cdots & 1 \\  \cline{6-9}
      & 0 &  & 0 & 0 &  & & &\\
      &  & & \vdots & \vdots &  & I_{n-1} & &\\
     & &  &  0 & 0 & & & &\\
\end{array}\right] 
\]
to obtain
\begin{equation*}
\begin{split}
\left[
\begin{array}{ccc|c|ccc}
     &  &  & 0 & & & \\
    & L_B(G)(i \mid \emptyset)  &  & \vdots &  & 0 &   \\
     & &  &  \vdots &  & & \\
     & & &  0 &  & &\\ \cline{1-3} 
     & 	& &	-1 & 0 & \cdots & 0 \\  \cline{5-7}
      &  &  & 0 &  &  &\\
      &  0 & & \vdots  &  & L_B(G')(1 \mid \emptyset) & \\
     &  & & \vdots  &  & & \\
     & &  &  0 & & & \\
\end{array}\right] v_i &= \begin{bmatrix}
1 \\
\vdots \\
1 \\
n+1 \\
n \\
\hline
1 \\
\vdots \\
\vdots \\
1 \\
\end{bmatrix}.
\end{split}
\end{equation*}
We write $(v_i)_k$ to denote the $k\ss{th}$ coordinate of $v_i$.
Then $(v_i)_k \in \Z$ for all $k \in [n-1]$ by Lemma \ref{keylemma}.
Observe from the above multiplication $(v_i)_n=-n$.
Finally, $(v_i)_k \in \Z$ for all $k$, $n+1 \le k \le 2n-1$, as a consequence of $T_{G'}$ being reflexive, i.e., $T_{G'}^*$ is a lattice polytope.

Now suppose $i = n$.
Replace $I_{n-2}$ with $I_{n-1}$ and follow the same argument as above.
Then $(v_i)_n=-n$, and it follows all other coordinates of $v_i$ are integers since $T_G^*$ and $T_{G'}^*$ are lattice polytopes.
\end{proof}

\begin{remark}
The bridge graph construction described in Theorem~\ref{thm:bridge} can be obtained by applying Proposition~\ref{algorithm} to the wedge of two graphs $G$ and $G'$ with a leaf attached to the wedge point. 
Proposition \ref{algorithm} shows the Laplacian simplex associated to the wedge of $G$ and $G'$ is lattice equivalent to the Laplacian simplex associated to the bridge of $G$ and $G'$.
Thus, the wedge of $G$ and $G'$ is reflexive if $G$ and $G'$ satisfy the conditions in Theorem~\ref{thm:bridge}.
\end{remark}

The following proposition shows that Theorem \ref{thm:bridge} applies to graphs such as the one given in Example~\ref{5}.

\begin{prop}
If $G=C_{2k+1}$ and $G'=K_{2k+1}$, then the bridge graph between these is reflexive.
\end{prop}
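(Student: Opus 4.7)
The plan is to verify both hypotheses of Theorem~\ref{thm:bridge} for $G = C_{2k+1}$ and $G' = K_{2k+1}$, writing $n := 2k+1$. Reflexivity of $T_G$ and $T_{G'}$ is already established by Theorem~\ref{cycle} (odd cycles) and Theorem~\ref{complete} (complete graphs), so only the two divisibility conditions on second minors remain to be checked.

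For $G = C_{2k+1}$ one has $\kappa_G = n$, so the required condition $\kappa_G \mid n M_{ij}$ reduces to $n \mid n M_{ij}$ and holds trivially. The substantive work is in verifying the condition for $G' = K_{2k+1}$. Here the Matrix-Tree Theorem gives $\kappa_{G'} = n^{n-2}$, so $\kappa_{G'} \mid n M_{ij}'$ is equivalent to $n^{n-3} \mid M_{ij}'$. Using Lemma~\ref{determinant}, I would write
\[ M_{ij}' = \det L'(i, n \mid j, n) + \det L'(i, n \mid j+1, n) \]
(with only the first summand when $j = n-1$), where $L'$ denotes the Laplacian of $K_n$; it thus suffices to show that every second minor of $L'$ is divisible by $n^{n-3}$.

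For this I would exploit $L' = n I_n - J_n$ and classify $\det L'(\{a,b\} \mid \{c,d\})$ by $|\{a,b\} \cap \{c,d\}|$. When $\{a,b\} = \{c,d\}$ the submatrix equals $n I_{n-2} - J_{n-2}$, whose determinant is $n^{n-3}(n-(n-2)) = 2 n^{n-3}$. When the two sets share exactly one element, a suitable permutation of rows and columns (affecting only the sign) puts the submatrix into the block form
\[ \begin{pmatrix} n I_{n-3} - J_{n-3} & -\mathbf{1} \\ -\mathbf{1}^T & -1 \end{pmatrix}, \]
whose Schur complement with respect to the $1 \times 1$ block is $n I_{n-3}$, yielding determinant $-n^{n-3}$. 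When the two sets are disjoint, the indices $c$ and $d$ remain as row indices but their columns are deleted, causing rows $c$ and $d$ of the submatrix to equal the all-$(-1)$ vector, so the determinant is zero. In every case the second minor is a multiple of $n^{n-3}$, hence so is $M_{ij}'$, and Theorem~\ref{thm:bridge} applies.

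The main obstacle is handling the case in which $|\{a,b\} \cap \{c,d\}| = 1$; recognizing the block structure and applying a Schur complement is the cleanest way to compute this determinant and exhibit the $n^{n-3}$ divisor explicitly, while the other two cases follow essentially immediately from the spectral structure of $n I_m - J_m$.
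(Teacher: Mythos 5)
Your proposal is correct and follows essentially the same route as the paper: the paper likewise applies Theorem~\ref{thm:bridge}, dismisses the cycle condition via $\kappa_{C_n}=n$, and delegates the complete-graph condition to Lemma~\ref{lem:completebridge}, which uses Lemma~\ref{determinant} to reduce to the same second minors of $L(K_n)$. The only difference is cosmetic: the paper evaluates those minors by explicit row operations (obtaining $2n^{n-3}$ and $-n^{n-3}$ in the two cases that actually occur, since the deleted index sets always share the index $n$), whereas you use the spectrum of $nI-J$ and a Schur complement, plus a third (disjoint) case that never arises in the application.
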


\begin{proof}
For cyclic graphs on $n$ vertices, the number of spanning trees is $n$.
This and Lemma~\ref{lem:completebridge} show that both cyclic graphs and complete graphs satisfy the condition $\kappa$ divides $|V(G)|\cdot M_{ij}$, as described in Lemma \ref{keylemma}.
We show in later sections that $T_{K_n}$ and $T_{C_{2k+1}}$ are reflexive Laplacian simplices.
\end{proof}

\begin{lemma}\label{lem:completebridge}
For all $n \ge 1$, $G=K_n$ satisfies the conditions of Lemma \ref{keylemma}; that is, for each $i \in [n-1]$, $\kappa$ divides $nM_{nj}$ for each $1 \le j \le n-1$.
Here $M_{nj} = \det{L_B(i, n \mid j)}$. 
\end{lemma}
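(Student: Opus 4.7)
The approach is to invoke Cayley's formula $\kappa(K_n)=n^{n-2}$ and reduce the divisibility $\kappa \mid nM_{nj}$ to showing that $n^{n-3}$ divides $\det L(i,n \mid j,n)$ for every $i \in [n-1]$ and every admissible $j$. By Lemma~\ref{determinant},
\[
M_{nj} = \det L(i,n \mid j,n) + \det L(i,n \mid j+1,n)
\]
for $j \le n-2$, and $M_{n,n-1} = \det L(i,n \mid n-1,n)$, so once divisibility by $n^{n-3}$ holds for each second minor on the right, it passes to $M_{nj}$, and multiplication by $n$ yields $\kappa = n^{n-2} \mid nM_{nj}$.

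The heart of the proof is therefore to evaluate the $(n-2)\times(n-2)$ second minors of $L(K_n)=nI_n-J_n$. I would split into the two cases $i=j$ and $i\ne j$. In the first case, deleting matching rows and columns gives $L(i,n \mid i,n)= nI_{n-2}-J_{n-2}$, whose eigenvalues are $n$ (with multiplicity $n-3$) and $2$ (with multiplicity $1$); hence its determinant is $2n^{n-3}$. In the second case, since $j \in [n-1]\setminus\{i\}$ belongs to the surviving row set and $i$ belongs to the surviving column set, the row indexed by $j$ has entries $L_{j\ell}=-1$ for every surviving $\ell$ and the column indexed by $i$ has entries $L_{ki}=-1$ for every surviving $k$. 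Permuting that row to the top and that column to the left (which only changes a sign), the submatrix becomes
\[
\begin{pmatrix} -1 & -1 & \cdots & -1 \\ -1 & n-1 & \cdots & -1 \\ \vdots & \vdots & \ddots & \vdots \\ -1 & -1 & \cdots & n-1 \end{pmatrix},
\]
because the lower-right $(n-3)\times(n-3)$ block is exactly $nI_{n-3}-J_{n-3}$. Subtracting the first row from every other row yields an upper-triangular matrix with diagonal $(-1,n,n,\ldots,n)$, so its determinant is $\pm n^{n-3}$.

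In either case $n^{n-3}$ divides the second minor in question, which completes the proof by the reduction above. The main obstacle is just the bookkeeping in the $i \ne j$ case: one has to verify carefully that the row indexed by $j$ and the column indexed by $i$ really are the all-$(-1)$ strip of the surviving submatrix, and that the remaining block is $nI_{n-3}-J_{n-3}$, after which the row reduction and the identification of the determinant are routine.
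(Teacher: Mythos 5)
Your proposal is correct and follows essentially the same route as the paper: reduce via Lemma~\ref{determinant} and Cayley's formula to showing $n^{n-3}$ divides each second minor $\det L(i,n\mid j,n)$, then split into the cases $i=j$ and $i\ne j$ and evaluate the determinants as $2n^{n-3}$ and $\pm n^{n-3}$ respectively. The only cosmetic difference is that you compute the $i=j$ case spectrally from $nI_{n-2}-J_{n-2}$ while the paper uses row operations; both yield the same value.
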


\begin{proof}
It is sufficient to show for each $1 \le i,j \le n-1$, $\kappa$ divides $nM_{ij}$ where $M_{ij} = \det{L(i, n \mid j, n)}$.
By Lemma \ref{determinant}, this implies the result.
For $G=K_n$, recall Cayley's formula yields $\kappa = n^{n-2}$.
Then we must show $n^{n-3}$ divides $M_{ij}$
There are two cases to consider.

Suppose $i=j$.
Then using row operations on $L(i,n \mid i,n) \in \Z^{(n-2) \times (n-2)}$ which preserve the determinant, we have 

\begin{equation*}
\begin{split}
M_{ii} &=
\det{\left[
\begin{array}{rrrrr}
(n-1) & -1 & \cdots & \cdots & -1 \\
-1 & \ddots & \ddots & & \vdots \\
\vdots & \ddots & \ddots &\ddots & \vdots \\
\vdots &  & \ddots &\ddots  & -1 \\
-1 & \cdots & \cdots & -1 & (n-1) \\
\end{array}
\right]} \\
&= \det{ \begin{bmatrix}
2 & 2 & \cdots & \cdots & 2 \\
-1 & (n-1) & -1 & \cdots & -1 \\
\vdots & \ddots & \ddots & \ddots & \vdots \\
\vdots &  & \ddots & \ddots & -1 \\
-1 & \cdots & \cdots & -1 &(n-1) \\
\end{bmatrix}} \\
&= 2\det{ \begin{bmatrix}
1 & 1 & \cdots & \cdots & 1 \\
-1 & (n-1) & -1 & \cdots & -1 \\
\vdots & \ddots & \ddots & \ddots & \vdots \\
\vdots &  & \ddots & \ddots & -1 \\
-1 & \cdots & \cdots & -1 & (n-1) \\
\end{bmatrix}} \\
&= 2\det{ \begin{bmatrix}
1 & 1 & \cdots & \cdots & 1 \\
0 & n & 0 & \cdots & 0 \\
\vdots & 0 & n & \ddots & \vdots \\
\vdots &  & \ddots & \ddots & 0 \\
0 & \cdots & \cdots & 0 & n \\
\end{bmatrix}} \\
&= 2n^{n-3}.
\end{split}
\end{equation*}

In the case $i \ne j$, $L(i,n \mid j, n) \in \Z^{(n-2) \times (n-2)}$ contains exactly one row and one column with all entries of $-1$. 
Without loss of generality we have 

\begin{equation*}
\begin{split}
M_{ij} &= \det{\begin{bmatrix}
-1 & -1 & \cdots & \cdots & -1 \\
-1 & (n-1) & -1 & \cdots & \vdots \\
\vdots & -1 & \ddots & \ddots & \vdots \\
\vdots &  & \ddots & \ddots & -1 \\
-1 & \cdots & \cdots & -1 & (n-1) \\
\end{bmatrix}} \\
&= \det{\begin{bmatrix}
-1 & -1 & \cdots & \cdots & -1 \\
0 & n & 0 & \cdots & 0 \\
\vdots & 0 & n & \ddots & \vdots \\
\vdots & & \ddots & \ddots & 0 \\
0 & \cdots & \cdots & 0 & n \\
\end{bmatrix}} \\ 
&= -n^{n-3}.
\end{split}
\end{equation*}

\end{proof}

\section{Trees}

Consider the case where $G=P_k$, a path on $k$ vertices.
Label the vertices along the path with the elements of $[k]$ in increasing order.
Then $L$ and consequently $L_B$ have the form
\[
L = 
\left[
\begin{array}{rrrrrr}
    1 & -1 & 0 & \cdots & \cdots & 0 \\
    -1 & 2 & -1& 0 &  & \vdots  \\
    0 & -1 & 2 & -1 & \ddots & \vdots \\
    \vdots & \ddots & \ddots & \ddots & \ddots & 0 \\
    \vdots &  & \ddots & \ddots & 2 & -1 \\
    0 & \cdots & \cdots & 0 & -1 & 1
\end{array}
\right] \hspace{.2in}
L_B = 
\left[
\begin{array}{rrrrrr}
    1 & 0 & \cdots & \cdots  & 0 \\
    -1 & 1 & 0  &  & \vdots  \\
    0 & -1 & 1 & \ddots & \vdots \\
    \vdots & \ddots & \ddots & \ddots &  0 \\
    \vdots &  & \ddots & -1 & 1 \\
    0 & \cdots & \cdots & 0 & -1
\end{array}
\right]
\]

Observe that multiplication by the lower triangular matrix of all ones yields

\[
L_B \cdot \begin{bmatrix}
    1 & 0 & \cdots & \cdots & 0 \\
    \vdots & \ddots & \ddots &  & \vdots  \\
    \vdots &  & \ddots & \ddots & \vdots \\
    \vdots  &  &  & \ddots & 0 \\
    1 & \cdots & \cdots & 1 & 1
\end{bmatrix}  =
\begin{bmatrix}
    1 & 0 & \cdots & \cdots & 0 \\
    0 & 1 & \ddots &  & 0 \\
    \vdots & \ddots & \ddots & \ddots & \vdots \\
    \vdots &  & \ddots & \ddots & 0 \\
    0 & \cdots & \cdots & 0 & 1 \\
    -1 & \cdots & \cdots & -1 & -1
\end{bmatrix} \]

Since the lower triangular matrix is an element in ${\text GL}_{k-1}(\Z)$, it follows that $T_P$ is lattice equivalent to 
\[
S_{k-1}(1) := \conv{ e_1, e_2, \cdots , e_{k-1}, - \sum_{i=1}^{k-1} e_i} \, .
\] 
We leave it as an exercise for the reader to show that $S_{k-1}(1)$ is the unique reflexive $(k-1)$-polytope of minimal volume.
This extends to all trees as follows.

\begin{prop}\label{prop:trees}
Let $G$ be a tree on $n$ vertices. 
Then $T_G$ is unimodularly equivalent to $S_{n-1}(1)$.
\end{prop}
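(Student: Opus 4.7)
The plan is to exploit the fact that any tree $G$ has exactly $\kappa = 1$ spanning tree, which forces the $(n-1)\times(n-1)$ minors of $L_B$ to be $\pm 1$; this in turn makes it possible to write down a single explicit unimodular change of basis that sends $T_G$ to $S_{n-1}(1)$, bypassing any need to induct over tree structure or to iterate the edge-moving operation of Proposition~\ref{algorithm}.

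First, I recall from the computation inside the proof of Proposition~\ref{properties}(ii) that $\det L_B(i \mid \emptyset) = (-1)^{i+n}\kappa$ for every $i \in [n]$. Since $G$ is a tree, $\kappa = 1$, and hence the submatrix $A' := L_B(n \mid \emptyset) \in \Z^{(n-1)\times(n-1)}$ has determinant $\pm 1$. Consequently $U := (A')^{-1}$ lies in $\mathrm{GL}_{n-1}(\Z)$, i.e.\ it is an honest integer unimodular matrix, and right multiplication by $U$ realizes a lattice equivalence of Laplacian simplices.

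Next I will show that $L_B \cdot U$ is precisely the vertex matrix of $S_{n-1}(1)$. By construction the top $(n-1) \times (n-1)$ block of $L_B \cdot U$ equals $A' \cdot U = I_{n-1}$, so the first $n-1$ rows are $e_1, \ldots, e_{n-1}$. For the last row, I use that every column of $L$ sums to zero, so every column of $L_B = L \cdot A$ sums to zero, and hence the rows $v_1, \ldots, v_n$ of $L_B$ satisfy $\sum_{i=1}^n v_i = 0$. Therefore the last row of $L_B \cdot U$ is $v_n \cdot U = -\sum_{i=1}^{n-1} v_i \cdot U = -\sum_{i=1}^{n-1} e_i$. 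Taking convex hulls of the rows then yields $T_G \cong S_{n-1}(1)$ as claimed.

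I do not anticipate any real obstacle: the whole argument reduces to the single observation that $\kappa = 1$ makes a submatrix of $L_B$ unimodular, after which the unique affine relation among the rows is forced by the column-sum property of the Laplacian. The path computation displayed earlier in the section is exactly the special case where $A'$ happens to be lower triangular, so this proof can be viewed as the natural generalization of that calculation to arbitrary trees.
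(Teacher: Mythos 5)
Your proof is correct, and it takes a more explicit linear-algebraic route than the paper's. Both arguments ultimately rest on the same key input --- $\kappa=1$ forces $\det L_B(i\mid\emptyset)=\pm 1$ --- but you use it to write down the unimodular map directly as $U=(L_B(n\mid\emptyset))^{-1}$ and then pin down the image of the last vertex via the relation $\sum_{i=1}^n v_i = 0$ coming from the zero column sums of $L$. The paper instead argues geometrically: it triangulates $T_G$ by coning each facet over the origin, notes that each pyramid has normalized volume $\kappa=1$ (so the simplex on any $n-1$ vertices and the origin is unimodular), asserts that a unimodular transformation sends those $n-1$ vertices to standard basis vectors, and then recovers the final vertex from the requirement that the remaining pyramids also have normalized volume $1$ together with the origin lying in the interior. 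Your version buys a concrete formula for the equivalence and a cleaner determination of the last vertex (the affine relation is exact, not merely a volume constraint combined with a sign condition), and it sidesteps the slightly informal ``we can assume'' step in the paper; the paper's version emphasizes the volume decomposition, which foreshadows how $\kappa$ controls the geometry of $T_G$ for general graphs. One small point worth making explicit in your write-up: right multiplication by $U$ acts on the rows of $L_B$, i.e.\ on the vertices of $T_G$, as a lattice-preserving linear automorphism of $\Z^{n-1}$, which is exactly the notion of unimodular equivalence used throughout the paper, so no further justification is needed there.
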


\begin{proof} 
Let $G$ be a tree on $n$ vertices. 
Then $T_G$ is a simplex that contains the origin in its strict interior and has normalized volume equal to $n$, since $G$ has only one spanning tree.
Consider the triangulation of $T_G$ that consists of creating a pyramid at the origin over each facet. 
Since $G$ is a tree, 
\[
\text{vol}(T_G) = \sum_{\text{facet}} \text{vol}(F) = 1\cdot n = n \, .
\]
There are $n$ facets, so each must have vol$(F)=1$. 
Applying a unimodular transformation to $n-1$ of the vertices of $T_G$, we can assume that the vertices of $T_G$ are the $n$ standard basis vectors and a single integer vector in the strictly negative orthant (so that the origin is in the interior of $T_G$).
Because the normalized volume of the pyramid over each facet is equal to $1$, it follows that the final vertex is $-\1$.
\end{proof}

\begin{corollary}\label{cor:treeunim}
The $h^*$-vector of the Laplacian simplex for any tree is $(1,1,\ldots,1)$, hence is unimodal.
\end{corollary}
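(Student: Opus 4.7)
The plan is to extract $h^*(T_G) = (1, 1, \ldots, 1)$ purely from the enumerative data already assembled in Proposition~\ref{properties}, without doing any further work on the combinatorial structure of $T_G$ or even appealing to the identification with $S_{n-1}(1)$ in Proposition~\ref{prop:trees}. The two ingredients I will combine are: the normalized-volume identity $n!\,\text{vol}(T_G) = \sum_{i=0}^{n-1} h_i^*(T_G)$ together with Proposition~\ref{properties}(ii), which for a tree forces this sum to equal $n$; and the pointwise lower bound $h_i^*(T_G) \ge 1$ from Proposition~\ref{properties}(iv), which holds on exactly $n$ indices. A sum of $n$ nonnegative integers, each at least $1$, that equals $n$ can only be achieved when every summand equals $1$.

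Concretely, I would first note that any tree on $n$ vertices has a unique spanning tree, so $\kappa = 1$. Proposition~\ref{properties}(ii) then gives $\sum_{i=0}^{n-1} h_i^*(T_G) = n\kappa = n$, while Proposition~\ref{properties}(iv) supplies $h_i^*(T_G) \ge 1$ for each of the $n$ indices $0 \le i \le n-1$. These two inequalities are simultaneously tight only when $h_i^*(T_G) = 1$ for every $i$, so the $h^*$-vector is $(1, 1, \ldots, 1)$. This vector is trivially unimodal (it is both weakly increasing and weakly decreasing), which finishes the proof.

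I do not expect a real obstacle here — every ingredient is already stated in the paper, and the argument is just the observation that these two facts fit together. An alternative route would be to invoke Proposition~\ref{prop:trees} and compute the fundamental parallelepiped of $S_{n-1}(1)$ directly via Lemma~\ref{lem:fpp}, writing down the $n$ points $(i/n)(1,\ldots,1)\cdot[L_B\mid\1] = (0,\ldots,0,i)$ for $0 \le i \le n-1$ and checking that these are the only lattice points in $\Pi_{T_G}$; but the volume/lower-bound argument sidesteps this computation entirely.
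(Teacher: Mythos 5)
Your argument is correct, and it takes a genuinely different route from the paper. The paper obtains Corollary~\ref{cor:treeunim} as an immediate consequence of Proposition~\ref{prop:trees}, which establishes the stronger fact that $T_G$ is unimodularly equivalent to $S_{n-1}(1)$ (via a volume computation showing each facet-pyramid over the origin has normalized volume $1$, after which the last vertex is forced to be $-\1$). You instead bypass the classification entirely: $\kappa=1$ for a tree, so Proposition~\ref{properties}(ii) gives $\sum_{i=0}^{n-1} h_i^*(T_G)=n$, while Proposition~\ref{properties}(iv) gives $h_i^*(T_G)\ge 1$ for each of the $n$ indices, and a sum of $n$ terms each at least $1$ equaling $n$ forces every term to be $1$. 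This is airtight — you do not even need Stanley's integrality here, only the lower bound and the volume identity $(n-1)!\,\mathrm{vol}(\mathcal P)=\sum_i h_i^*$ stated in Section~\ref{sec:ehrhart}. What your argument buys is economy and a mild generalization (any connected graph with $\kappa=1$, i.e.\ any tree, is handled with no structural analysis; more generally any lattice $d$-simplex with $h_i^*\ge 1$ for all $i$ and normalized volume $d+1$ has $h^*=(1,\ldots,1)$). What it does not buy is the unimodular equivalence with $S_{n-1}(1)$ itself, which the paper needs downstream in Corollary~\ref{tree transform} and Proposition~\ref{tail}; so your proof is a clean shortcut to the corollary, but not a replacement for Proposition~\ref{prop:trees}.
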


\begin{corollary}\label{tree transform} 
Let $G$ be a tree on $n$ vertices with Laplacian matrix $L_B$.
Then there exists $U \in {\text GL}_{n-1}(\Z)$ such that 
\[
L_B \cdot U = 
\left[
\begin{array}{rrrr}
    1 & 0 & \cdots &  0 \\
    0 & 1 & \ddots & \vdots  \\
    \vdots & \ddots & \ddots & 0 \\
    0 & \cdots & 0 & 1 \\
    -1 & \cdots & \cdots & -1 
\end{array} 
\right]
\] 
\end{corollary}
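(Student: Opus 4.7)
The plan is to exhibit $U$ explicitly rather than extract it from Proposition~\ref{prop:trees}. The key observation is that a tree has exactly one spanning tree, so $\kappa = 1$, and this forces the $(n-1) \times (n-1)$ submatrix formed by the first $n-1$ rows of $L_B$ to be unimodular over $\Z$.

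First I would set $L' := L_B(n \mid \emptyset)$, the square submatrix obtained by deleting the last row of $L_B$. Using the cofactor computation already carried out in the proof of Proposition~\ref{properties}(ii), which established $\det L_B(i \mid \emptyset) = (-1)^{i+n}\kappa$, specializing to $i = n$ gives $\det L' = \kappa = 1$. Hence $L'$ is itself a unimodular integer matrix, and its inverse $U := (L')^{-1}$ lies in $\mathrm{GL}_{n-1}(\Z)$.

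Next I would verify that $L_B \cdot U$ equals the target matrix. The top $(n-1) \times (n-1)$ block of $L_B \cdot U$ is $L' \cdot U = I_{n-1}$ by construction. For the final row, I would invoke the identity recorded in the proof of Proposition~\ref{properties}(iii), namely that the rows of $L_B$ sum to zero, so $(L_B)_{n,:} = -\sum_{i=1}^{n-1} (L_B)_{i,:}$. Right-multiplying by $U$ yields
\[
(L_B)_{n,:}\, U \;=\; -\sum_{i=1}^{n-1} (L_B)_{i,:}\, U \;=\; -\sum_{i=1}^{n-1} e_i^T \;=\; -\1^T,
\]
which is precisely the last row of the target matrix.

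There is essentially no obstacle once the correct $U$ is identified; the proof reduces to a two-line computation. An indirect alternative would be to deduce the corollary from Proposition~\ref{prop:trees} by using that reflexive polytopes contain a unique interior lattice point (so the affine equivalence furnished by that proposition must fix the origin and hence is linear) together with the transitive action of the lattice symmetries of $S_{n-1}(1)$ on its vertex set to arrange the correct ordering of rows. However, this path invokes additional machinery and obscures the clean explicit formula $U = L_B(n \mid \emptyset)^{-1}$, which also recovers the lower-triangular all-ones matrix used above in the special case $G = P_k$.
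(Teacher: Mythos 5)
Your proof is correct, and it takes a more direct route than the paper, which states this corollary without proof as a consequence of Proposition~\ref{prop:trees}. The paper's path is geometric: the volume computation in Proposition~\ref{prop:trees} (each pyramid over a facet with apex at the origin has normalized volume $1$, since the total is $n\cdot\kappa = n$ and there are $n$ facets) shows $T_G$ is unimodularly equivalent to $S_{n-1}(1)$, and the corollary is then read off. Your argument instead exhibits $U = L_B(n\mid\emptyset)^{-1}$ explicitly: the determinant identity $\det L_B(i\mid\emptyset) = (-1)^{i+n}\kappa$ from the proof of Proposition~\ref{properties}(ii) gives $\det L_B(n\mid\emptyset)=\kappa=1$ for a tree, so $U\in\mathrm{GL}_{n-1}(\Z)$, and the zero row-sum of $L_B$ forces the last row of $L_B\cdot U$ to be $-\1^T$. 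Both steps check out. Your version actually buys something the paper's phrasing leaves implicit: ``unimodularly equivalent'' in Proposition~\ref{prop:trees} permits a translation and a permutation of vertices, so deducing the exact matrix identity of the corollary (linear map, rows in the stated order) from that proposition requires the extra remarks you sketch in your last paragraph about the unique interior lattice point and the symmetries of $S_{n-1}(1)$. Your explicit construction sidesteps all of that and, as you note, specializes to the lower-triangular all-ones matrix the paper displays for the path $P_k$. The only cosmetic point: you do not need the alternative indirect argument at all, and the phrase ``forces the submatrix to be unimodular'' is justified precisely by the cofactor identity you cite, so it is worth stating $\det L_B(n\mid\emptyset)=(-1)^{2n}\kappa=1$ explicitly rather than leaving the sign to the reader.
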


The next proposition asserts that attaching an arbitrary tree with $k$ vertices to a graph on $n$ vertices yields a lattice isomorphism between the resulting Laplacian simplex and the Laplacian simplex obtained by attaching any other tree with $k$ vertices at the same root.

\begin{prop}\label{tail} 
Let $G$ be a connected graph on $n$ vertices, and let $v$ be a vertex of $G$. 
Let $G'$ be the graph obtained from $G$ by attaching $k$ vertices such that $G'$ restricted to the vertex set $ \{ v \} \cup [k]$ forms a tree, call it $T$. 
The edges of $G'$ are the edges from $G$ along with any edges among the vertices $\{v\} \cup [k]$.
Let $P$ be the graph obtained from $G$ by attaching $k$ vertices such that $P$ restricted to the vertex set $\{ v\} \cup [k]$ forms a path. 
Then $T_{G'} \cong T_{P}.$ 
\end{prop}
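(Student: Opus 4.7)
The plan is to identify a canonical unimodular form for the Laplacian matrix of $G'$ that depends only on $G$, $v$, and $k$, and not on the internal structure of the tree $T$. By Propositions~\ref{equivalence} and~\ref{properties}(i), $T_{G'}$ is unimodularly equivalent to $\conv{L(G')(v)^T}$, the simplex obtained by deleting the column of $L(G')$ indexed by $v$. Ordering the vertices of $G'$ as $V(G)\setminus\{v\},\,v,\,[k]$, the matrix $L(G')(v)$ admits the block decomposition
\[
L(G')(v) = \begin{pmatrix} M_G & 0 \\ \gamma_G & \gamma_T \\ 0 & M_T \end{pmatrix},
\]
where $M_T$ is the principal submatrix of $L(T)$ obtained by deleting the row and column indexed by $v$, $\gamma_T$ is the partial row $L(T)_{v,[k]}$, and $M_G,\gamma_G$ are the analogous pieces coming from $L(G)$.

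Two observations drive the argument. First, since $T$ is a tree, the Matrix-Tree Theorem gives $\det M_T = \kappa(T) = 1$, so $M_T \in \mathrm{GL}_k(\Z)$ and $M_T^{-1}$ has integer entries. Second, since each column of $L(T)$ sums to zero, one has $\gamma_T + \1^T M_T = 0$, and consequently $\gamma_T M_T^{-1} = -\1^T$. Right-multiplying $L(G')(v)$ by the unimodular block matrix $\mathrm{diag}(I_{n-1},\,M_T^{-1})$ therefore produces
\[
L(G')(v) \cdot \begin{pmatrix} I_{n-1} & 0 \\ 0 & M_T^{-1} \end{pmatrix} = \begin{pmatrix} M_G & 0 \\ \gamma_G & -\1^T \\ 0 & I_k \end{pmatrix}.
\]

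This canonical form depends only on the pair $(G,v)$ and the integer $k$. Running exactly the same argument on $L(P)(v)$, using $M_{T_P}^{-1}$ in place of $M_T^{-1}$, yields the identical matrix; composing the two column operations then exhibits an explicit unimodular transformation from $L(G')(v)$ to $L(P)(v)$, giving $T_{G'} \cong T_P$. The only genuine content is the first of the two observations: the tree condition $\kappa(T)=1$ is precisely what is needed to convert the Laplacian column-sum identity $\gamma_T = -\1^T M_T$ into an integral, unimodular column operation that erases every trace of how $T$ branches.
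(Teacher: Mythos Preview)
Your proof is correct and follows essentially the same strategy as the paper: exhibit a block-diagonal unimodular column transformation that normalizes the tree portion of the Laplacian to a canonical form depending only on $(G,v,k)$. The only difference is bookkeeping---you work with $L(G')(v)$ and invoke the Matrix-Tree Theorem directly to obtain $M_T\in\mathrm{GL}_k(\Z)$ and $\gamma_T M_T^{-1}=-\1^T$, whereas the paper works in the $L_B$ representation and cites Corollary~\ref{tree transform} for the analogous normalization.
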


\begin{proof} 
The reduced Laplacian matrix associated to $T_{G'}$ is the following $(n+k) \times (n+k-1)$ matrix:
\[ 
\left[
\begin{array}{ccc|ccc}
     &  &  &  &   & \\
     & L_B(G) & &  & 0 &   \\
     & &  &  &  &   \\
     & & &   &  &   \\ \cline{4-6}
     & 	& &	 & 	&   \\ \cline{1-3}
      &  &   & &  &   \\
      & 0 &  &  & L_B(T) &  \\
      &  &  & & &  \\
\end{array}
\right] 
\]

Here $L_B(T) \in \Z^{(k+1) \times k}$ is the Laplacian matrix for $T$, the tree on $(k+1)$ vertices.
Let $U \in {\text GL}_{k}(\Z)$ be the matrix such that $L_B(T) \cdot U$ gives the matrix with vertex set $S_{k}(1)$ as in Corollary~\ref{tree transform}. Then we have 

\[ 
\left[
\begin{array}{ccc|ccc}
     &  &  &  &  &  \\
     & L_B(G) & &  & 0 &   \\
     &  &  &  &  &   \\
     &  &  &  &  &  \\ \cline{4-6}
     & 	&  &  &  &  \\ \cline{1-3}
     &  &  &  &  & \\
     & 0 &  &  & L_B(T) & \\
     &  &  &  & &  \\
\end{array}
\right] \cdot \left[ \begin{array}{ccc|ccc}
	 &  &  &  &  &  \\
     & I_{n-1} &  &  & 0 &   \\
     & &  &  &  &  \\ \cline{1-6}
     & 	&	&  & 	&  \\ 
     &  &  & &  & \\
     & 0 &  & & U & \\
     &  & &  & &  \\
\end{array}
\right] = \left[ \begin{array}{ccc|ccc}
     &  &  &  &  &  \\
     & L_B(G) & &  & 0 &   \\
     &  &  &  &  &   \\
     &  &  &  &  &  \\ \cline{4-6}
     & 	&  &  &  &  \\ \cline{1-3}
     &  &  &  &  & \\
     & 0 &  &  & L_B(P) & \\
     &  &  &  & &  \\
\end{array}
\right]
\]

For any set of $k$ vertices we attach to a vertex $v \in V(G)$ to obtain a tree on the vertex set $\{v\} \cup [k]$, we get a corresponding unimodular matrix $U$ such that the above multiplication holds. 
The determinant of the $(n-1+k) \times (n-1+k)$ transformation matrix is equal to the determinant of $U$, which is $\pm 1$. 
Then $T_{G'}$ is lattice equivalent to $T_P$ for any such $G'$.
\end{proof}

\begin{remark}
It follows from Theorem~\ref{thm:bridge} that bridging a tree to a graph $G$ with $T_G$ reflexive and $L(G)$ satisfying the appropriate division condition on minors will result in a new reflexive Laplacian simplex.
Further, Proposition~\ref{tail} shows that the equivalence class of the resulting reflexive simplex is independent of the choice of tree used in the attachment.
\end{remark}

\section{Cycles}

Let $C_n$ denote the cycle with $n$ vertices.
In this section, we show that odd cycles are reflexive and have unimodal $h^*$-vectors, but fail to be IDP.
We show that whiskering even cycles results in reflexive Laplacian simplices.
Finally, we determine the $h^*$-vectors for $T_{C_n}$ when $n$ is an odd prime.

\subsection{Reflexivity and Whiskering}

\begin{theorem}\label{cycle} 
For $n \ge 3$, the simplex $T_{C_n}$ is reflexive if and only if $n$ is odd.
For $k \ge 2$, the simplex $T_{C_{2k}}$ is $2$-reflexive. 
\end{theorem}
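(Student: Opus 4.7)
The plan is to analyze reflexivity by explicitly computing the vertices of the dual polytope $T_{C_n}^*$, using the unimodularly equivalent description $T_{C_n} \cong \conv{L(n)^T}$ from Proposition~\ref{properties}(i). Denote the rows of $L(n)$ by $r_1, \ldots, r_n$; the dual vertex $v_k$ opposite the facet $F_k$ is determined by $r_j \cdot v_k = 1$ for all $j \ne k$. The cyclic automorphism of $C_n$, combined with Proposition~\ref{equivalence} for passing between equivalent representatives $\conv{L(i)^T}$, reduces the problem to analyzing a single dual vertex, which I choose to be $v_n$.

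The defining equation $L(n,n) v_n = \1$ is the tridiagonal system whose general interior row reads $-x_{i-1} + 2 x_i - x_{i+1} = 1$, with boundary rows $2x_1 - x_2 = 1$ and $-x_{n-2} + 2x_{n-1} = 1$. I would solve this using the ansatz $x_i = A + Bi + Ci^2$ (or guess and verify) to obtain the closed form
\[
x_i = \frac{i(n-i)}{2}, \qquad i = 1, \ldots, n-1.
\]
The reflexivity dichotomy follows immediately: $v_n \in \Z^{n-1}$ iff $i(n-i)$ is even for every $i \in [n-1]$; when $n$ is odd, $i$ and $n-i$ always have opposite parities, so this holds, while when $n$ is even the choice $i = 1$ gives the odd product $n-1$. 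Cyclic symmetry then transfers this dichotomy to every dual vertex, proving the first statement.

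For the 2-reflexivity claim with $n = 2k$, I would verify the three conditions of Definition~\ref{def:ref} with $\ell = 2$. Condition (i) is Proposition~\ref{properties}(iii). Condition (ii) follows because each row $r_i$ of $L(n)$ for $C_n$ contains an entry equal to $\pm 1$, making its coordinates coprime. For condition (iii), the facet $F_n$ lies in the affine hyperplane $\{x : x \cdot (2 v_n) = 2\}$, and the integer vector $2 v_n$ has $j$-th coordinate $j(2k-j)$; since the first entry $2k-1$ is odd and $\gcd(2k-1,\, 2(2k-2)) = \gcd(2k-1, 2) = 1$, the vector $2 v_n$ is primitive, forcing $\ell_{F_n} = 2$. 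Cyclic symmetry then yields $\ell_F = 2$ for every facet.

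The hardest part is identifying the closed form $x_i = i(n-i)/2$; once in hand, the parity test for reflexivity and the primitivity check for 2-reflexivity are routine. A subtle point is invoking cyclic symmetry when working with the asymmetric representative $\conv{L(n)^T}$: I would handle this by explicitly combining the graph automorphism $i \mapsto i+1$ of $C_n$ with the unimodular equivalence of Proposition~\ref{equivalence} to transport the analysis of each dual vertex back to $v_n$.
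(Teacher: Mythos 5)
Your proposal is correct and follows essentially the same route as the paper: both arguments establish reflexivity by explicitly solving the linear systems $L(\,\cdot\mid\cdot\,)\,v=\1$ for the vertices of the dual simplex and reading off the parity obstruction, and both obtain $2$-reflexivity from the half-integrality and primitivity of those dual vertices. The differences are bookkeeping rather than substance: the paper solves all $n$ systems in the $L_B$ coordinates and lists every dual vertex, whereas you solve a single tridiagonal system (getting the cleaner closed form $i(n-i)/2$) and transport it to the other facets via the cyclic automorphism combined with Proposition~\ref{equivalence}; your write-up also makes explicit two points the paper leaves implicit, namely the primitivity of the vertices of $T_{C_{2k}}$ itself (condition (ii) of Definition~\ref{def:ref}) and the gcd computation showing $2v_n$ is primitive.
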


\begin{proof} 
Let $C_n$ be a cycle with vertex set $[n]$ and vertices labeled cyclically.
Then $L$ and consequently $L_B$ have the form (when rows and columns are suitably labeled)
\[
L = \left[ \begin{array}{rrrrrr}
    2 & -1& 0  & \cdots & 0 & -1 \\
    -1& 2 & -1 & \ddots &  & 0 \\
    0 & -1& 2 & -1  & \ddots & \vdots \\
    \vdots  & \ddots & \ddots  & \ddots & \ddots & 0 \\
    0 & & \ddots &-1 & 2 & -1 \\
    -1 & 0 & \cdots & 0 & -1 & 2
\end{array}\right] \hspace{.2in}
L_B = \left[ \begin{array}{rrrrr}
    2 & 1 &  \cdots & \cdots & 1 \\
    -1& 1 &  0 & \cdots & 0 \\
    0 & -1&   1  & \ddots & \vdots \\
    \vdots  & \ddots &  \ddots & \ddots & 0 \\
    0 & \cdots & 0  & -1 & 1 \\
    -1 & -1 & \cdots & -1 & -2
\end{array}\right] \, . 
\]

To show that $T_{C_n}$ is reflexive, we show $T_{C_n}^*= \{x \mid L_B x \le \1 \}$ is a lattice polytope.
Each intersection of $(n-1)$ of these facet hyperplanes will yield a unique vertex of $T_{C_n}^*$, since the rank of $L_B$ is $n-1$.
For each $i \in [n]$, let $v_i \in \R^{n-1}$ be the vertex that satisfies $L_B(i \mid \emptyset) \cdot v_i = \1$.
Solving the appropriate system of linear equations yields
\begin{equation*}
\begin{split}
v_1 &= \left( \frac{1-n}{2}, \frac{3-n}{2}, \frac{5-n}{2}, \cdots , \frac{n-5}{2}, \frac{n-3}{2} \right) = \left( \frac{(2j-1)-n}{2} \right)_{j=1}^{n-1} \\
v_i &= \left( \left( \frac{(2j+1)+n-2i}{2} \right)_{j=1}^{i-1}, \left(\frac{(2j+1) -n - 2i}{2} \right)_{j=i}^{n-1} \right), \text{ for } 2 \le i \le n-1 \\
v_n &= \left( \frac{3-n}{2} , \frac{5-n}{2}, \frac{7-n}{2}, \cdots , \frac{n-3}{2}, \frac{n-1}{2} \right)= \left(\frac{(2j+1)-n}{2} \right)_{j=1}^{n-1} \\
\end{split}
\end{equation*}

These are the vertices of $T_{C_n}^*$.
Note $v_i \in \Z^{n-1}$ only if $n$ is odd. 
Then $T_{C_n}$ is reflexive if and only if $n$ is odd.

For the even case, observe the coordinates of each vertex of $2\cdot T_{C_{2k}}^*$ are relatively prime.
Then each of these vertices is primitive.
Thus, for $n=2k$ each vertex of $T_{C_{2k}}^*$ is a multiple of $\frac{1}{2}$, which allows us to write
\[
T_{C_{2k}} = \left\{ x \mid \frac{1}{2} \tilde{A}x \le \1 \right\}= \left\{x \mid \tilde{A}x \le 2 \cdot \1 \right\} 
\] 
where $\tilde{A} \in \Z^{n \times (n-1)}$.
The facets of $T_{C_{2k}}$ have supporting hyperplanes $\langle r_i, x \rangle = 2$ where $r_i$ is the $i$\ss{th} row of $\tilde{A}$.
Thus $T_{C_{2k}}$ is a  $2$-reflexive Laplacian simplex.
\end{proof}

\begin{example} 
Below are the dual polytopes to $T_{C_n}$ for small $n$.
\begin{itemize}
\item $T_{C_3}^* = \conv{(-1,0), (1,-1), (0,1)}$
\item $T_{C_4}^* = \conv{ (-\frac{3}{2}, - \frac{1}{2}, \frac{1}{2}), (\frac{3}{2}, -\frac{3}{2}, -\frac{1}{2}), (\frac{1}{2}, \frac{3}{2}, - \frac{3}{2}), (-\frac{1}{2}, \frac{1}{2}, \frac{3}{2})}$
\item $T_{C_5}^* = \conv{(-2,-1,0,1), (2,-2,-1,0), (1,2,-2,-1) , (0,1,2,-2), (-1,0,1,2)}$
\end{itemize}
\end{example}


Although $T_{C_{2k}}$ is not reflexive, we show next that whiskering $C_{2k}$ results in a graph $W(C_{2k})$ such that $T_{W(C_{2k})}$ is reflexive. 
The technique of whiskering graphs has been studied previously in the context of Cohen-Macaulay edge ideals, see \cite[Theorem 4.4]{DochtermannEngstromEdgeIdeals} and \cite{VillarrealCMGraphs}.

\begin{definition}\label{whisker}
To add a \emph{whisker} at a vertex $x \in V(G)$, one adds a new vertex $y$ and the edge connecting $x$ and $y$. 
Let $W(G)$ denote the graph obtained by whiskering all vertices in $G$.
We call $W(G)$ the \emph{whiskered graph of $G$}. 
If $V(G) = \{ x_1, \ldots, x_n\}$ and $E(G) = E$, then $V(W(G)) = V(G) \cup \{y_1, \ldots, y_n\}$ and $E(W(G))=E \cup \{ \{x_1, y_2\}, \ldots, \{x_n, y_n \} \}$.
\end{definition}

\begin{prop}\label{evenreflexive}
$T_{W(C_{n})}$ is reflexive for even integers $n \ge 2$.
\end{prop}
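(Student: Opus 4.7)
The strategy is to verify reflexivity by computing the vertices of the dual polytope $T_{W(C_n)}^*$ directly and checking that each lies in $\Z^{2n-1}$. Label the vertices of $W(C_n)$ so that $[n]$ indexes the cycle vertices cyclically and $n+1,\ldots,2n$ index the whisker leaves, with vertex $n+i$ adjacent to cycle vertex $i$. With this ordering the Laplacian has the block form
\[
L = \begin{pmatrix} L(C_n) + I_n & -I_n \\ -I_n & I_n \end{pmatrix},
\]
and $L_B = L \cdot A$ is obtained from the basis matrix $A$ of equation~(\ref{eqn:A}). First I would compute $\kappa(W(C_n))$: since every whisker edge is a bridge, each spanning tree must contain all $n$ of them, and contracting those edges leaves $C_n$, so $\kappa(W(C_n)) = \kappa(C_n) = n$. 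In particular $T_{W(C_n)}$ has normalized volume $2n \cdot n = 2n^2$, and by Theorem~\ref{characterization} reflexivity is equivalent to showing that, for every $i \in [2n]$, the system $L_B(i \mid \emptyset) v_i = \1$ has an integer solution.

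Next I would reduce the amount of casework by exploiting the cyclic $\Z/n$ symmetry of $W(C_n)$ together with the symmetry swapping each cycle vertex with its leaf orbit: it suffices to treat two representative indices, namely a whisker index $i=n+1$ and a cycle index $i=1$. In each case I would carry out the block row operation of adding each whisker row to the corresponding cycle row. This kills the $-I_n$ block sitting below the diagonal, turns the upper-left block into $L(C_n)$ (with one row missing or modified depending on which $i$ was deleted), and leaves a diagonal $I_n$ block on the whisker rows. The whisker coordinates are then solved immediately in integers, while the cycle coordinates satisfy a reduced system of the form $L(C_n)(i \mid n)\, w = \1 + e_i$, where $e_i$ is a correction vector that records the contribution of the deleted row and the reindexing to $L_B$.

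The main obstacle is showing that this modified right-hand side $\1 + e_i$ lies in the integer lattice image of the reduced cycle Laplacian, which is precisely the step where the bare even cycle failed in Theorem~\ref{cycle}. The expected mechanism is that the correction $e_i$ provided by the whiskers shifts the parity of the numerators appearing in Cramer's rule so that they become divisible by $\kappa = n$. Concretely, from the explicit half-integer formulas for the vertices of $T_{C_{2k}}^*$ derived in the proof of Theorem~\ref{cycle}, one sees that the obstruction is a uniform factor of $\tfrac{1}{2}$ in every coordinate; the whisker correction contributes exactly an odd integer to each such coordinate, doubling the numerator and eliminating the denominator. Once this explicit integer formula for each $v_i$ is in hand, reflexivity follows immediately, since $T_{W(C_n)}^*$ is the convex hull of the $v_i$ and they all lie in $\Z^{2n-1}$.
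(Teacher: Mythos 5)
Your proposal is correct and follows essentially the same route as the paper's proof: write $L$ in the block form $\bigl(\begin{smallmatrix} L(C_n)+I_n & -I_n \\ -I_n & I_n\end{smallmatrix}\bigr)$, add each whisker row to its cycle row to decouple the leaf coordinates, and observe that the reduced cycle system has right-hand side $2\cdot\1$, so its solution is twice a vertex of $T_{C_n}^*$, which is integral precisely because those vertices are half-integral by Theorem~\ref{cycle}. The only difference is that you invoke the cyclic symmetry to reduce to two representative indices, whereas the paper works through three explicit index ranges ($1\le i\le n$, $i=n+1$, and $n+2\le i\le 2n$) by hand; this is a legitimate minor streamlining rather than a different argument.
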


\begin{proof}
$W(C_n)$ is a graph with vertex set $[2n]$ and $2n$ edges.
Label the vertices of the cycle with $[n]$ in a cyclic manner.
Label the vertices of each whisker with $i$ and $n+i$ where $i\in [n]$. 
The Laplacian matrix has the following form.
\[
L=\left[\begin{array}{ ccc | ccc}
 &  &  & & &   \\
  &  L + I_n&  & &-I_n &   \\
   &  &  & & &   \\
   \hline
    &  &  & & &   \\ 
     &  -I_n&  & & I_n&   \\
      &  &  & & &   \\
\end{array}\right]
\]
Consequently if $A$ is the $n \times (n-1)$ matrix given by Equation~\eqref{eqn:A}, then   
\[
L_B=\left[\begin{array}{ ccc | cccc}
 &  &  & & &  &  \\
  &  L_B(C_n) + A&  & & A^T & &  \\
     &  &  & & &  & \\
   &  &  & 1 & \cdots & \cdots & 1\\
   \hline
    &  &  & & &   & \\ 
     &  -A &  & & -A^T& &  \\
      &  &  & & &  & \\
      &  &  & -1 &\cdots & \cdots &-1 \\
\end{array}\right].
\]
We show $T_{W(C_n)}$ is reflexive by showing $T_{W(C_n)}^*$ is a lattice polytope.
Each vertex of the dual is a solution to $L_B(i \mid \emptyset) v_i = \1$.
We consider the following cases.

\textbf{Case:} $1 \le i \le n$.
Multiply both sides of $L_B(i \mid \emptyset) v_i = \1$ by the $(2n-1) \times (2n-1)$ upper diagonal matrix with the following entries. 

\[
x_{\ell k}=
\begin{cases}
1, & \text{if $\ell = k$} \\
1, & \text{if $\ell < k$ and $\{v_\ell, v_k\}$ is a whisker} \\
-1, & \text{if $n < \ell = k-1$} 
\end{cases}
\]
In this matrix each of the first $n-1$ rows will have exactly two non-zero entries of value $1$, which corresponds to adding the two rows of $L_B(i \mid \emptyset)$ that are indexed by the labels of a whisker in the graph. 
The last $n$ rows will have an entry of $1$ along the diagonal and an entry of $-1$ on the superdiagonal, which corresponds to subtracting consecutive rows in $L_B(i \mid \emptyset)$ to achieve cancellation.
We obtain the following system of linear equations.
\[
\left[\begin{array}{ ccc | cccc}
 &  &  & & &  &  \\
  &  L_B(C_n)(i \mid \emptyset)  & & &  & 0 &  \\
     &  &  & & &  & \\
   &  &  &  &  &  & \\
   \hline
    &  &  & 0 &  &   & \\ 
     &  -I_{n-1} &  & \vdots & & I_{n-1} &  \\
      &  &  & 0 & &  & \\
      0& \cdots &0  & -1 &\cdots & \cdots &-1 \\
\end{array}\right] v_i = \begin{bmatrix}
2 \\
\vdots \\
2 \\
0 \\
\vdots \\
0 \\
1 \\
\end{bmatrix}
\]

Let $(v_i^*)_j$ denote the $j\ss{th}$ coordinate of the vertex $v_i \in \Q^{n-1}$ of $T_{C_n}^*$ described in Proposition \ref{cycle}.
Then the vertex $v_i$ of $T_{W(C_n)}^*$ has the following form. 
\[
(v_i)_j = \begin{cases}
2(v_i^*)_j, & \text{if $1 \le j \le n-1$} \\
-1 - \sum_{k=1}^{n-1} 2(v_i^*)_k, & \text{if $j=n$} \\
2(v_i^*)_{j-n}, & \text{if $n+1 \le j \le 2n-1$} \\
\end{cases}
\]
 Since $2(v_i^*)_j \in \Z$ by Proposition \ref{cycle} for $1 \le j \le n-1$, then $v_i \in \Z^{2n-1}$. \\

\textbf{Case:}  $n+2 \le i \le 2n$.
The strategy is to multiply the equality $L_B( i \mid \emptyset) v_i = \1$ by the matrix that performs the following row operations.
Let $r_m \in \Z^{2n-1}$ denote the $m\ss{th}$ row of $L_B( i \mid \emptyset)$.
For each whisker with vertex labels $\{m, n+m\}$, replace $r_m$ with $r_m + r_{n+m}$ for $m \in [n]$.  
Row $i-n$ will not have a row to add because the index of its whisker is the index of the deleted row.
Since each column in $L_B$ sums to $0$, the negative sum of all the rows of $L_B(i \mid \emptyset)$ is equal to the row removed.
We recover the missing row by replacing $r_{i-n}$ with $-\sum_{k=1}^{2n-1} r_k$ for $r_k \in L_B(i \mid \emptyset)$. 
Then as in the previous case, we want to replace row $r_k$ with $r_k - r_{k+1}$ for $n+1 \le k \le 2n-2$.
Here $r_{i-n}$ plays the role of the deleted $r_i$.
We obtain a similar system of linear equations found in the first case.
The vertex $v_i$ of $T_{W(C_n)}^*$ has the following form.
\[
(v_i)_j = \begin{cases}
2(v_i^*)_j, & \text{if $1 \le j \le n-1$} \\
-1 - \sum_{k=1}^{n-1} 2(v_i^*)_k, & \text{if $j = n$} \\
2(v_i^*)_{j-n}, & \text{if $n+1 \le j \le 2n-1$ and $j \ne i-1, i$} \\
2(v_i^*)_{j-n} + 2n, & \text{if $j=i-1$} \\
2(v_i^*)_{j-n} - 2n, & \text{if $j=i$} \\
\end{cases}
\]

Observe in the case $i=2n$, the last equality is not applicable since $j \in [2n-1]$.
Then $v_i \in \Z^{2n-1}$. \\

\textbf{Case:} $i=n+1$.
Here $(v_i)_{i-1} = (v_i)_{n} = -(2n-1) - \sum_{k=1}^{n-1} 2(v_i^*)_k \in \Z$ and the other coordinates are as described above. 
Then $v_i \in \Z^{2n-1}$.

\end{proof}

We extend Proposition \ref{evenreflexive} to a more general result, that whiskering a graph whose Laplacian simplex is $2$-reflexive results in a graph whose Laplacian simplex is reflexive.
Although even cycles are the only known graph type to result in $2$-reflexive Laplacian simplices, we include the following result.

\begin{prop}
If $G$ is a connected graph on $n$ vertices such that $T_G$ is $2$-reflexive, then $T_{W(G)}$ is reflexive for all $n \ge 2$.
\end{prop}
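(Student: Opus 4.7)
The plan is to recast the argument used in Proposition~\ref{evenreflexive} in a way that depends on $G$ only through the $2$-reflexivity of $T_G$, not through the cyclic structure. Label the vertices of $G$ by $[n]$ and the whisker vertices by $n+1,\ldots,2n$ so that $n+i$ is the whisker attached to vertex $i$. Then the Laplacian of $W(G)$ has the block form
\[
L(W(G)) = \begin{bmatrix} L(G)+I_n & -I_n \\ -I_n & I_n \end{bmatrix},
\]
and, using the matrix $A$ from \eqref{eqn:A}, one obtains for $L_B(W(G))$ the same block shape that appears in the proof of Proposition~\ref{evenreflexive}, with $L_B(C_n)$ replaced by $L_B(G)$ throughout. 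I would then verify reflexivity by showing that every vertex of $T_{W(G)}^*$ is a lattice point, i.e.\ that for each $i\in[2n]$ the unique solution $v_i$ to $L_B(W(G))(i\mid\emptyset)v_i = \1$ lies in $\Z^{2n-1}$.

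The three-case row reduction used in the even-cycle proof does not actually use anything about cycles: it only uses the whisker structure, i.e.\ the presence of the $-I_n$ and $I_n$ blocks. So I would carry out exactly the same unimodular row operations. Concretely, for each whisker pair $\{m, n+m\}$ I add the whisker row $r_{n+m}$ to the interior row $r_m$ (which kills the $-I_n$ block), and then subtract consecutive whisker rows to turn the lower block into $I_{n-1}$ as in the original proof. When a whisker row has been deleted (the cases $i\ge n+1$), I first recover it as the negative sum of all remaining rows (using that columns of $L_B(W(G))$ sum to $0$) before applying the same reduction. This yields a reduced system whose upper block is precisely $L_B(G)(i\mid\emptyset)$ times the first $n-1$ coordinates of $v_i$, equated to $2\cdot \1$, and whose lower block immediately determines the remaining coordinates.

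Solving this, the first $n-1$ coordinates of $v_i$ are $2(v_i^*)_j$, where $v_i^*$ is the $i$th vertex of $T_G^*$; the $n$th coordinate is $-1-\sum_{k=1}^{n-1}2(v_i^*)_k$ (up to the modifications near the deleted whisker row in cases $i\ge n+1$, which shift two coordinates by $\pm 2n$); and the last $n-1$ coordinates are again $2(v_i^*)_{j-n}$. By definition of $T_G$ being $2$-reflexive, $2 v_i^*\in \Z^{n-1}$ for every $i$, so each coordinate of $v_i$ is an integer, and therefore $T_{W(G)}^*$ is a lattice simplex. Since $T_{W(G)}$ obviously contains the origin in its interior (the columns of $L_B(W(G))$ span and sum to zero), this gives reflexivity.

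The main obstacle is the bookkeeping in the cases $i=n+1$ and $n+2\le i\le 2n$, where the deleted row is a whisker row. Here one must check that recovering the missing row as the negative sum of the remaining rows still produces a system whose ``interior block'' is an invertible minor of $L_B(G)$ and whose right-hand side, after row reduction, is again a multiple of $\1$ on that block (with at most an integer adjustment from the $-\1$ on the recovered row). Once this is checked, the identification of the interior coordinates of $v_i$ with $2(v_i^*)_j$ is immediate and $2$-reflexivity finishes the argument exactly as in Proposition~\ref{evenreflexive}.
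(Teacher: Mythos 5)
Your proposal is correct and follows essentially the same route as the paper: the paper's own proof is a two-line sketch that says to repeat the row reductions of Proposition~\ref{evenreflexive} with $L_B(C_n)$ replaced by $L_B(G)$ and to conclude from $2v_i^*\in\Z^{n-1}$, which is exactly what you spell out (including the correct handling of the deleted-whisker-row cases). No gaps.
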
 

\begin{proof}
If $T_G$ is $2$-reflexive, then each vertex $v_i$ of $T_G^*$ satisfies $2 v_i \in \Z^{n-1}$ for each $1 \le i \le n$.
As in the proof of Proposition \ref{evenreflexive}, we can find descriptions of the vertices of $T_{W(G)}^*$ in terms of the coordinates from vertices of $T_G^*$ to show they are lattice points.
The result follows. 
\end{proof}

Given a graph $G$ with $T_G$ is reflexive, we have already seen that attaching a tree on $|V(G)|$ vertices to obtain a new graph $G'$ on $2\cdot |V(G)|$ vertices results in the reflexive Laplacian simplex $T_{G'}$.
Whiskering a graph also preserves the reflexivity of $T_G$, as seen in the following result.

\begin{prop}
If $G$ is a connected graph on $n$ vertices such that $T_G$ is reflexive, then $T_{W(G)}$ is reflexive for all $n \ge 1$.
\end{prop}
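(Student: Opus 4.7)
The plan is to mirror the argument given for Proposition~\ref{evenreflexive} (the whiskered even cycle case), and its $2$-reflexive generalization, reading off the integrality of the vertices of $T_{W(G)}^*$ directly from the integrality of the vertices of $T_G^*$. Because $T_G$ is assumed reflexive rather than $2$-reflexive, each vertex $v_i^*$ of $T_G^*$ satisfying $L_B(G)(i \mid \emptyset) v_i^* = \1$ already lies in $\Z^{n-1}$, which is precisely the input we need.

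First I would label the vertices of $W(G)$ so that $[n]$ indexes $V(G)$ and vertex $n+i$ is the whisker attached at $i \in [n]$. Then $L(W(G))$ has the block form
\[
L(W(G)) = \begin{bmatrix} L(G) + I_n & -I_n \\ -I_n & I_n \end{bmatrix},
\]
and $L_B(W(G))$ has the analogous block form obtained by multiplying on the right by the $2n\times(2n-1)$ matrix $A$ from equation~\eqref{eqn:A}, exactly as in the proof of Proposition~\ref{evenreflexive} (with $L_B(C_n)$ replaced by $L_B(G)$). To show $T_{W(G)}$ is reflexive I would show that each vertex $v_i$ of $T_{W(G)}^*$, determined by $L_B(W(G))(i \mid \emptyset) v_i = \1$ for $i \in [2n]$, is a lattice point.

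Next I would split into the same three cases as in Proposition~\ref{evenreflexive}: (a) $1 \le i \le n$, a row from the $V(G)$ block is removed; (b) $n+2 \le i \le 2n$, a non-initial whisker row is removed; (c) $i = n+1$, the first whisker row is removed. In each case I would apply the same unimodular row operations: add each whisker row to its paired vertex row to zero out the lower $-I_n$ block, subtract consecutive whisker rows to reduce the remaining whisker block to a (shifted) identity, and in cases (b) and (c) recover the deleted row as the negative sum of all the remaining rows (using that each column of $L_B(W(G))$ sums to $0$). The resulting reduced system has the form already computed in the cyclic case, with $L_B(G)(i \mid \emptyset)$ appearing in the upper-left block and right-hand side a vector of $2$'s and $0$'s together with $\pm 1$ in the appropriate entry.

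Solving this reduced system then expresses each coordinate of $v_i$ as an integer linear combination of the coordinates of $v_i^*$, together with an integer constant shift; the formulas are identical to those at the end of Proposition~\ref{evenreflexive}, i.e.\ $(v_i)_j = 2(v_i^*)_j$ on the cycle/graph block, $(v_i)_n = -1 - \sum_{k=1}^{n-1} 2(v_i^*)_k$ on the transitional coordinate, $(v_i)_j = 2(v_i^*)_{j-n}$ on the whisker block, plus the $\pm 2n$ corrections in case (b) and the analogous modification for case (c). Since $T_G$ is reflexive we have $v_i^* \in \Z^{n-1}$, so every coordinate of $v_i$ lies in $\Z$, and thus $v_i \in \Z^{2n-1}$. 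This forces $T_{W(G)}^*$ to be a lattice simplex, so $T_{W(G)}$ is reflexive.

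The main obstacle is bookkeeping: verifying that the block row operations used in Proposition~\ref{evenreflexive} depend only on the block structure arising from the whiskering and not on any cycle-specific feature of $L(C_n)$. Since the whiskering block structure is identical for an arbitrary connected $G$, the unimodular row-reduction goes through verbatim, and the integrality conclusion follows directly from reflexivity of $T_G$ rather than from the weaker $2$-reflexivity hypothesis used in the even cycle case.
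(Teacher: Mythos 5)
Your proposal is correct and follows essentially the same route as the paper: the paper likewise observes that reflexivity of $T_G$ gives $2v_i^* \in \Z^{n-1}$ with $L_B(G)(i \mid \emptyset)\,2v_i^* = 2\cdot\1$, and then invokes the block row-reduction from Proposition~\ref{evenreflexive} to express the vertices of $T_{W(G)}^*$ as integer combinations of the coordinates of $2v_i^*$. You have simply written out in more detail the verification that those row operations depend only on the whiskering block structure and not on $G$ being a cycle.
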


\begin{proof}
If $T_G$ is reflexive, then vertices of $T_G^*$ are integer and satisfy $L_B(i \mid \emptyset)v_i = \1$ for all $1 \le i \le n$.
Observe $2v_i \in \Z^{n-1}$ satisfies $L_B(i \mid \emptyset) 2v_i = 2\cdot \mathbbm{1}$.
Following the proof technique in Proposition \ref{evenreflexive}, we can find descriptions of the vertices of $T_{W(G)}^*$ in terms of the coordinates from vertices of $T_G^*$ to show they are lattice points.
\end{proof}

\subsection{$h^*$-Unimodality}

For odd $n$, our proof of the following theorem can be interpreted as establishing the existence of a weak Lefschetz element in the quotient of the semigroup algebra associated to $\cone{T_{C_n}}$ by the system of parameters corresponding to the ray generators of the cone.
This proof approach is not universally applicable, as there are examples of reflexive IDP simplices with unimodal $h^*$-vectors for which this proof method fails~\cite{BraunDavisFreeSum}.

\begin{theorem}\label{unimodal} 
For odd $n$, $h^*(T_{C_n})$ is unimodal. 
\end{theorem}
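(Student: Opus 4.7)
The plan is to follow the weak Lefschetz strategy that the authors foreshadow in the sentence preceding the statement. Since $T_{C_n}$ is reflexive for odd $n$ by Theorem~\ref{cycle}, Hibi's theorem gives the symmetry $h_i^*=h_{n-1-i}^*$. Consequently it suffices to prove the increasing half, $h_0^*\le h_1^*\le \cdots \le h_{(n-1)/2}^*$, and then symmetry automatically handles the decreasing half.

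Algebraic setup: form the graded semigroup algebra $S=\C[\cone{T_{C_n}}\cap \Z^n]$, graded by the last coordinate. The vertices of $T_{C_n}$ produce ray generators $(v_1,1),\ldots,(v_n,1)$ of $\cone{T_{C_n}}$; the corresponding monomials $x_1,\ldots,x_n\in S_1$ form a linear system of parameters. Let $R:=S/(x_1,\ldots,x_n)$. By Lemma~\ref{lem:fpp}, the Hilbert function of $R$ equals $(h_0^*,\ldots,h_{n-1}^*)$, with $R_i$ spanned by classes of lattice points of $\Pi_{T_{C_n}}$ at height $i$. By Proposition~\ref{properties}(ii) the normalized volume is $n\cdot\kappa=n^2$, so the lattice points of $\Pi_{T_{C_n}}$ correspond bijectively to admissible $b$-vectors $b\in\{0,1,\ldots,n^2-1\}^n$ satisfying the integrality condition $\frac{1}{n^2}\,b\cdot [L_B\mid\1]\in\Z^n$, and the height of the associated lattice point is $\frac{1}{n^2}\sum b_i$.

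Candidate Lefschetz element: take $\ell\in R_1$ to be the class of the height-one lattice point $(0,\ldots,0,1)\in\Pi_{T_{C_n}}$ exhibited in the proof of Proposition~\ref{properties}(iv). Under the $b$-vector parameterization this point corresponds to $b=(n,n,\ldots,n)$. The key claim to prove is that multiplication by $\ell$ induces an injection $R_i\to R_{i+1}$ for every $i\le (n-3)/2$. Combined with the symmetry obtained above, this injectivity yields $h_i^*\le h_{i+1}^*$ across the first half of the $h^*$-vector, which together with reflexivity-induced symmetry completes the proof of unimodality.

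Main obstacle: verifying that the map $\cdot\ell: R_i\to R_{i+1}$ is injective for $i<(n-1)/2$. In $b$-coordinates, multiplication by $\ell$ corresponds to the translation $b\mapsto b+(n,n,\ldots,n)$, reduced to $\Pi_{T_{C_n}}$ by subtracting $n^2\, e_j$ from any coordinate that reaches or exceeds $n^2$ (which corresponds to absorbing a copy of $(v_j,1)$ into the relation $x_j=0$). The task is to rule out that two distinct admissible $b$-vectors at height $i$ map, after this translation and reduction, to the same admissible $b$-vector at height $i+1$. The parity of $n$ is expected to enter decisively through the explicit form of the dual vertices computed in the proof of Theorem~\ref{cycle}: the half-integrality obstruction that blocks reflexivity in the even case is precisely what fails, for odd $n$, in a way that constrains the admissible $b$-vectors enough to force injectivity below the middle. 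The paper's subsequent Theorem~\ref{primes} on the odd-prime case (where $h^*=(1,\ldots,1,n^2-n+1,1,\ldots,1)$) provides a sanity check: in that case $R$ is concentrated almost entirely in the middle degree, and injectivity of $\cdot\ell$ from the thin outer levels into (or out of) the large middle level is transparent.
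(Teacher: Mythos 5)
Your setup is the same as the paper's: reduce to the increasing half via reflexivity and Hibi's symmetry, parameterize the lattice points of $\Pi_{T_{C_n}}$ by admissible $b$-vectors, and try to show that adding the height-one point $(0,\ldots,0,1)$ (i.e., $b\mapsto b+n\1$) injects height $i$ into height $i+1$ for $i<(n-1)/2$. But you have not proved the one claim that carries all the weight: you explicitly label the injectivity verification as the ``main obstacle'' and then only speculate about how parity ``is expected to enter decisively.'' That is precisely the content of the paper's proof, and nothing in your write-up supplies it. Concretely, what must be shown is that whenever $h(b)<\tfrac{n-1}{2}$, every coordinate satisfies $b_j<n(n-1)$, so that $b+n\1$ still lies in $[0,n^2)^n$ and hence $p+(0,\ldots,0,1)$ remains in $\Pi_{T_{C_n}}$. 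The paper does this in two steps: first it proves that for odd $n$ every admissible $b$ has all coordinates divisible by $n$ (oddness enters because $n\mid\frac{n(n-1)}{2}$ exactly when $n$ is odd --- not, as you guess, through the half-integrality of the dual vertices), and then, writing $b_i=m_in$, it runs a case analysis on the constraint equations $-m_{j-1}+2m_j-m_{j+1}\in n\Z$ to show that $m_j=n-1$ for some $j$ forces $h(b)\ge\tfrac{n-1}{2}$, a contradiction. Without these two claims your argument is a plan, not a proof.

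A secondary point: your description of what could go wrong is slightly off. Since the underlying map on $b$-vectors is a translation, two distinct admissible $b$-vectors cannot collide after reduction modulo $n^2$ in a single coordinate (that would force $b_j-b_j'=n^2$, impossible on $[0,n^2)$). The actual failure mode is that if some $b_j+n\ge n^2$, then in the Artinian quotient $R$ the product $\ell\cdot x_b$ is divisible by the parameter $x_j$ and hence is \emph{zero} in $R_{i+1}$, killing injectivity. So the correct thing to rule out is overflow of a coordinate, not collision --- which is exactly why the bound $b_j<n(n-1)$ is the statement you need to prove.
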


\begin{proof}
Recall from Lemma~\ref{lem:fpp} that $h^*_i(T_{C_n})$ is the number of lattice points in $\Pi_{T_{C_n}}$ at height $i$. 
Theorem~\ref{cycle} shows $h^*_i(T_{C_n})$ is symmetric.
Our goal is to prove that for $i\leq \lfloor n/2\rfloor$ we have $h_i^*\leq h^*_{i+1}$.
This will show that $h^*(T_{C_n})$ is unimodal.

While $\kappa=n$ for $C_n$, we will freely use both $\kappa$ and $n$ to denote this quantity, as it is often helpful to distinguish between the number of spanning trees and the number of vertices.
Lattice points in the fundamental parallelepiped of $T_{C_n}$ can be described as follows: 
\[
\Z^n \cap \left\{\frac{1}{\kappa n} b \cdot [L_B \mid \1] \mid 0 \le b_i < \kappa n, b_i\in \Z_{\geq 0}, \sum_{i=1}^n b_i \equiv 0 \bmod \kappa n \right\}.
\]
We will use the modular equation above extensively in our analysis.
Denote the height of a lattice point in $\Pi_{T_{C_n}}$ by 
\[
h(b):= \dfrac{\sum_{i=1}^n b_i}{n \kappa} \in \Z_{\ge 0} \, .
\]

We first show that every lattice point in $\Pi_{T_{C_n}}$ arising from $b$ satisfies 
\[ 
\dfrac{(k-j+1)(b_1-b_n)}{\kappa n} + \dfrac{b_j - b_{k+1}}{\kappa n} \in \Z 
\] 
for each $1 \le j < k \le n-1$. 
 Since the lattice point lies in $\Pi_{T_{C_n}}$, we have the following constraint equations:
\[
\frac{b_1 - b_n + b_i - b_{i+1}}{\kappa n} \in \Z
\] 
for each $1 \le i \le n-1$. Summing any consecutive set of these equations where $1 \le j \le k \le n-1$ yields
\[ 
\sum_{i=j}^k \left( \dfrac{b_1-b_n}{\kappa n} + \dfrac{b_i - b_{i+1}}{\kappa n} \right) \in \Z \, . 
\] 
The result follows. 

Thus, each vector $b$ corresponding to an integer point in $\Pi_{T_{C_n}}$ satisfies $\kappa \mid (b_1 - b_n)$, which follows from setting $j=1$ and $k=n-1$.
We next claim that every lattice point in $\Pi_{T_{C_n}}$ arises from $b \in \Z^n$ such that $b_i \equiv b_j \mod(\kappa)$ for each $1 \le i, j \le n$.   
To prove this, set $\frac{b_1-b_n}{\kappa}= B \in \Z.$ 
Then for each $1 \le i \le n-1$, our constraint equation becomes $\frac{B}{n} + \frac{b_i-b_{i+1}}{\kappa n} = C$ for some $C \in \Z$. 
Then $\frac{b_i - b_{i+1}}{\kappa} = Cn - B \in \Z$ holds for each $i$. 
The result follows. 

\textbf{First Major Claim:}
For $n$ odd, any lattice point in $\Pi_{T_{C_n}}$ arises from $b \in \Z^n$ such that $b_i \equiv 0 \mod(\kappa)$ for each $1 \le i \le n$.  

To prove this, let $b_i = m_i \kappa + \alpha$ such that $0 \le m_i < \kappa$ and $0 \le \alpha < \kappa$. 
Constraint equations yield 
\[ 
\frac{b_1-b_n + b_i - b_{i+1}}{\kappa n} = \frac{m_1-m_n+m_i-m_{i+1}}{n} \in \Z 
\] 
using $\kappa = n$. 
Summing all $n-1$ integer expressions with linear coefficients yields 
\[ 
\sum_{i=1}^k i(m_1 - m_n + m_i - m_{i+1}) = \frac{n(n-1)}{2}m_1 +\sum_{i=1}^{n-1} m_i - (n-1)m_n - \frac{n(n-1)}{2}m_n, 
\]
which is divisible by $n$. Call the resulting sum $An$ for some $A \in \Z$.
Finally, notice the last constraint equation (corresponding to $h(b)$) can be written 
\begin{equation*}
\begin{split}
\frac{\sum_{i=1}^n b_i}{\kappa n} &= \frac{\sum_{i=1}^n m_i + \alpha}{n}  \\ 
&= \frac{m_n + An - \frac{n(n-1)}{2}m_1 + (n-1)m_n + \frac{n(n-1)}{2}m_n + \alpha}{n} \in \Z.
\end{split}
\end{equation*}
Then $n$ odd implies $n$ divides $\frac{n(n-1)}{2}$ so that $n$ divides $\alpha$. 
Since $0 \le \alpha < n$, then $\alpha = 0$ as desired.


\textbf{Second Major Claim:}
Consider $T_{C_n}$ for odd $n$. Suppose $h(b) < \frac{n-1}{2}$. 
If $p \in \Pi_{T_{C_n}} \cap \Z^n$, then $ p + (0, \cdots, 0, 1)^T \in \Pi_{T_{C_n}} \cap \Z^n.$ 

To establish this, it suffices to prove that for every $p = \frac{1}{n^2}b \cdot [L_B \mid \1] \in \Pi_{T_{C_n}} \cap \Z^n$ such that $h(b)< \frac{n-1}{2}$, we have $b_i < n(n-1)$ for each $i$.
This would imply 
\[
p + (0, \cdots, 0, 1)^T = \frac{1}{n^2} (b+n \1) \cdot [L_B \mid \1] \in \Pi_{T_{C_n}} \cap \Z^n \, ,
\]
providing an injection from the lattice points in $\Pi_{T_{C_n}}$ at height $i$ to those at height $i+1$.
Constraint equations yield, using the same notation as in the proof of our first major claim, that
\[
-m_{j-1} + 2m_j -  m_{j+1} \in n\Z 
\] 
for each $1 \le j \le n$. 
Note that this comes from subtracting the two integers 
\[ 
\frac{m_1+m_j- m_{j+1} - m_n}{n} - \frac{m_1+m_{j-1} - m_j - m_n}{n} = \frac{2m_j - (m_{j-1} + m_{j+1})}{n} \in \Z
\] 
for each $2 \le j \le n-1$, as well as
\[ 
\frac{2m_1 - m_2 -m_n}{n}, \frac{-(m_1 + m_{n-1} - 2m_n)}{n} \in \Z \, .
\]
For a contradiction, suppose there exists a $j$ such that $b_j = n(n-1)$. 
Then $m_j = n-1$. 
Constraints on the other variables $m_i$ imply 
\[
0 \le \frac{2(n-1)-(m_{j-1}+m_{j+1})}{n} \le 1 \implies 2(n-1)-(m_{j-1} + m_{j+1}) = 0 \text{ or } n.
\]

\noindent Case 1: If the above is $0$, then 
\[ 2(n-1) = m_{j-1} + m_{j+1} \implies m_{j-1} = m_{j+1} = n-1. \] 
Apply these substitutions on other constraint equations to yield $m_i = n-1$ for all $1 \le i \le n$.
Then 
\[ 
h(b) = \frac{\sum_{i=1}^n m_i}{n} = \dfrac{n(n-1)}{n} = n-1 > \dfrac{n-1}{2},
\] 
which is a contradiction.

\noindent Case 2: If the above is $1$, then $n-2=m_{j-1}+m_{j+1}$.
Adding subsequent constraint equations yields 
\begin{equation*}
\begin{split}
\left( -m_j + 2m_{j-1} - m_{j-2}\right) + \left(-m_j + 2m_{j+1} - m_{j+2} \right) &= -2m_j + 2(m_{j-1} + m_{j+1}) - (m_{j-2} + m_{j+2})  \\
&= -2(n-1) + 2(n-2) - (m_{j-2} + m_{j+2}) \\
&= -2 -(m_{j-2} + m_{j+2}) 
\end{split}
\end{equation*}

\noindent Since the above is in $n\Z$, it is equal to either $-2n$ or $-n$.

\noindent Case 2a: If the above is equal to $-2n$, then $m_{j-2}=m_{j+2} = n-1$. 
Then 
\[
-m_{j-3} + 2m_{j-2} - m_{j-1} = -m_{j-3} + m_{j+1} \in n \Z \implies m_{j-3} = m_{j+1}. 
\]
A similar argument shows $m_{j+3} = m_{j-1}$. 
Continuing in this way shows $m_{j \pm k} = m_{j \mp 1}$ for remaining $m_i$. 
Then for each of the $\frac{n-3}{2}$ pairs, $m_{j-k} + m_{j+k}=n-2$ where $k \in \{ 1, \hat{2}, 3, \cdots, \frac{n-1}{2} \}$.
But then 
\begin{equation*}
\begin{split}
 h(b) &= \frac{\sum_{i=1}^n m_i}{n} \\
&= \frac{n-1 + 2(n-1) + \frac{n-3}{2}(n-2)}{n} \\
&= \frac{n+1}{2},
\end{split}
\end{equation*}
which is a contradiction. 

\noindent Case 2b: If the above is equal to $-n$, then $m_{j-2}+m_{j+2}=n-2$.
Adding subsequent constraint equations as above yields $n-2 -(m_{j-3}+m_{j+3})$.
Since the above is in $n\Z$, it is equal to either $-2n$ or $-n$. 

\noindent Case 2b(i): If the above is equal to $-n$, then $m_{j-3}=m_{j+3}=n-1$. 
Following the same argument as Case 2a leads to the contradiction, $h(b) = \dfrac{n+1}{2}$.\\

\noindent Case 2b(ii): If the above is equal to $-2n$, then $m_{j-3} + m_{j+3} = n-2$. 
Continuing in this manner yields $m_{j-k}+m_{j+k} = n-2$ for all $k \in \{ 1, 2, \cdots, \frac{n-1}{2} \}$. 
But then 
\[
h(b) = \frac{n-1 + \frac{(n-1)}{2}(n-2)}{n} = \frac{n-1}{2}, 
\]
which is a contradiction.
This concludes the proof of our second major claim.

The second claim implies that for $i\leq \lfloor n/2 \rfloor$, we have $h_i^*\leq h_{i+1}^*$.
Thus, our proof is complete.
\end{proof}

\subsection{Structure of $h^*$-vectors}

We next classify the lattice points in the fundamental parallelepiped for $T_{C_n}$ by considering the matrix $[L_B \mid \1]$ over the ring $\Z/\kappa \Z$. 
Let 
\[
[\widetilde{L} \mid \1] := [L_B \mid \1] \bmod \kappa \, .
\]
Recall that for a cycle we have $n=\kappa$.

\begin{lemma}\label{kernel}
For $C_{n}$ with odd $n$ and corresponding reduced Laplacian matrix $[L_B \mid \1]$, we have 
\[
\ker_{\Z/\kappa \Z}{[\widetilde{L} \mid \1]} = \{ x \in \left(\Z/\kappa \Z\right)^n \mid x[L_B\mid \1] \equiv \mathbf{0} \mod{\kappa} \} = \langle \1^n, (0, 1, \cdots, n-1) \rangle .
\]


\end{lemma}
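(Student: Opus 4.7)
The plan is to prove the two inclusions separately, exploiting the very explicit form of $L_B$ for the cycle $C_n$ that was displayed in the proof of Theorem~\ref{cycle}. The main work is a direct computation of the $n$ scalar congruences $x\cdot [L_B\mid \1]\equiv 0\bmod n$, which turns out to force $x$ to be an arithmetic progression modulo $n$; the odd-$n$ hypothesis enters exactly where the sum condition would otherwise fail.

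For the containment $\supseteq$, I would verify directly that each generator lies in the kernel. Since every column of $L_B$ sums to $0$ (as noted in Proposition~\ref{properties}) and $\1\cdot\1 = n\equiv 0\bmod n$, the vector $\1^n$ is in the kernel without any parity assumption. For $(0,1,\ldots,n-1)$, I would compute the $j$-th coordinate of $(0,1,\ldots,n-1)\cdot L_B$ column by column using the three row types of $L_B$ (the first row, the generic row with a $-1$ and $1$, and the last row $(-1,\ldots,-1,-2)$). In all three cases $j=1$, $2\le j\le n-2$, and $j=n-1$ the contributions telescope to $-n$, so $(0,1,\ldots,n-1)\cdot L_B = -n\cdot(1,1,\ldots,1) \equiv 0\bmod n$. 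The final pairing with $\1$ gives $n(n-1)/2$, which is divisible by $n$ precisely because $n$ is odd.

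For the containment $\subseteq$, suppose $x\cdot[L_B\mid\1]\equiv 0\bmod n$. Writing out the $j$-th column equation of $L_B$ for $j=1$, $2\le j\le n-2$, and $j=n-1$ gives respectively
\[
2x_1 - x_2 - x_n \equiv 0,\quad x_1 + x_j - x_{j+1} - x_n \equiv 0,\quad x_1 + x_{n-1} - 2x_n \equiv 0 \pmod n,
\]
and each of these can be rearranged to $x_{j+1} - x_j \equiv x_1 - x_n \pmod n$. Setting $d := x_1 - x_n \bmod n$, this shows $x_j \equiv x_1 + (j-1)d \bmod n$ for every $j$, so $x$ is an arithmetic progression. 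The internal consistency $x_n \equiv x_1 + (n-1)d \equiv x_1 - d \pmod n$ is automatic since $(n-1)d\equiv -d\bmod n$.

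It then remains to impose the $\1$-column equation $\sum_j x_j \equiv 0 \bmod n$, which becomes $nx_1 + d\cdot n(n-1)/2 \equiv 0\bmod n$. Because $n$ is odd, $(n-1)/2\in\Z$, so this congruence holds automatically for every choice of $x_1, d\in\Z/n\Z$. Consequently $x = x_1\cdot\1 + d\cdot(0,1,\ldots,n-1)$, yielding the reverse inclusion. A quick independence check (the first coordinate forces $x_1=0$, then the second forces $d=0$) shows the span has exactly $n^2$ elements, matching $|\det[L_B\mid\1]| = n\kappa = n^2$ as a sanity check. The one place the odd-$n$ hypothesis is genuinely needed is in killing $n(n-1)/2$ modulo $n$, both in the $\supseteq$ direction (for the pairing of $(0,1,\ldots,n-1)$ with $\1$) and in the $\subseteq$ direction (making the sum condition vacuous); this is the only subtle point of the argument.
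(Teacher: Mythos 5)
Your proof is correct, and for the harder inclusion it takes a genuinely different route from the paper. Both arguments handle $\supseteq$ the same way, by directly verifying that $\1$ and $(0,1,\ldots,n-1)$ annihilate $[L_B\mid\1]$ modulo $\kappa$ (the paper also gets $(-n,\ldots,-n,n(n-1)/2)$ for the second generator, with oddness of $n$ entering exactly where you say). For $\subseteq$, however, the paper argues by a rank count: it exhibits the unimodular $(n-2)\times(n-2)$ minor $[L_B\mid\1](1,n\mid 1,n)$ to get $\mathrm{rk}_{\Z/\kappa\Z}\ge n-2$, then uses the two independent kernel vectors to force the kernel to be ``two-dimensional.'' You instead solve the congruences explicitly: the column equations all rearrange to $x_{j+1}-x_j\equiv x_1-x_n$, so every kernel element is an arithmetic progression $x_1\1+d(0,1,\ldots,n-1)$, and the $\1$-column condition is vacuous for odd $n$. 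Your version is more self-contained and, arguably, more robust: since $\Z/n\Z$ is not a field for composite odd $n$, linear-independence and dimension counts over it require some care (a nonzero vector can generate a submodule of fewer than $n$ elements), whereas your explicit parametrization, together with the injectivity check giving exactly $n^2=n\kappa$ kernel elements, sidesteps that issue entirely and matches the count of lattice points in $\Pi_{T_{C_n}}$ used in Theorem~\ref{cyclefpp}. The paper's approach is shorter; yours trades a few lines of computation for an argument that needs no rank machinery over the ring.
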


\begin{proof}
Consider the second principal minor of $[L_B\mid \1]$ with the first and $n$\ss{th} rows and columns deleted. 
The matrix $[L_B \mid \1](1,n \mid 1,n)$ is the lower diagonal matrix of the following form:
\[ 
\left[
\begin{array}{rrrrr}
    1 & 0&  0 & \cdots & 0  \\
    -1& 1 &  0 &   & \vdots \\
    0 & \ddots  & \ddots & \ddots & \vdots \\
    \vdots  & \ddots & \ddots &  \ddots & 0  \\
    0 & \cdots & 0  & -1 & 1 
\end{array}
\right]
\]
Then $\det{[L_B \mid \1](1,n \mid 1,n)} = 1$ implies there are $n-2$ linearly independent columns, hence $\text{rk}_{\Z/\kappa \Z} [L_B \mid \1] \ge n-2$.

Since the entries in each column of $[L_B\mid \1]$ sum to $0$, then \[\1 \cdot [L_B\mid \1] = (0, 0, \ldots, 0, n) \equiv \bf{0} \mod \kappa\] implies $\1 \in \ker_{\Z/\kappa \Z}{[L_B\mid \1]}$.
Consider \[ (0, 1, \ldots, n-1) \cdot 
\left[\begin{array}{rrrrrrr}
    2 & 1& 1 &  \cdots & \cdots & 1 & 1\\
    -1& 1 & 0&  \cdots & \cdots & 0 & 1 \\
    0 & -1& 1 &  \ddots  &  & \vdots & \vdots\\
    \vdots  & \ddots & \ddots &  \ddots & \ddots& \vdots & \vdots \\
    \vdots  &  & \ddots &  \ddots & \ddots & 0 & \vdots \\
    0 & \cdots & \cdots & 0  & -1 & 1 & 1 \\
    -1 & -1 & -1 & \cdots & -1 & -2 &1
\end{array}\right] 
= \left(-n, \ldots, -n, \frac{n(n-1)}{2}\right) \equiv \bf{0} \mod \kappa. \]
This shows $(0, 1, \ldots , n-1) \in \ker_{\Z/\kappa \Z}{[L_B\mid \1]}$.
Since these two vectors are linearly independent, we have $\text{rk}_{\Z/\kappa \Z} [L_B \mid \1] \leq n-2$.

Thus, the kernel is two-dimensional and we have found a basis.
\end{proof}

\begin{theorem}\label{cyclefpp}
For odd $n \ge 3$, lattice points in $\Pi_{T_{C_n}}$ are of the form 
\[\frac{(\alpha \1 + \beta (0, 1, \ldots, n-1)) \mod{\kappa}}{\kappa} \cdot [L_B \mid \1] \] for all $\alpha, \beta \in \Z / \kappa \Z$.
Thus, $h^*_i(T_G)$ is equal to the cardinality of
\[
\left\{ \frac{(\alpha \1 + \beta (0, 1, \ldots, n-1)) \mod \kappa}{\kappa} \cdot [L_B \mid \1] \mid 0\leq \alpha, \beta<\kappa-1, \frac{1}{\kappa} \sum_{j=0}^{n-1} (\alpha + j \beta \mod \kappa ) = i \right\}.
\]
\end{theorem}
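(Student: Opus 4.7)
\textbf{Proof plan for Theorem \ref{cyclefpp}.} The strategy is to combine the $b$-vector parametrization of $\Pi_{T_{C_n}}\cap\Z^n$ from Section~\ref{sec:ehrhart} with the First Major Claim inside the proof of Theorem~\ref{unimodal} and the kernel computation in Lemma~\ref{kernel}. Recall that $\det[L_B\mid\1]=n\kappa=n^2$, so lattice points in $\Pi_{T_{C_n}}$ correspond to vectors $b\in\Z^n$ with $0\le b_i<n^2$ and $b\cdot[L_B\mid\1]\equiv \mathbf{0}\pmod{n^2}$, in which case the point itself is $\frac{1}{n^2}\,b\cdot[L_B\mid\1]$.

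First I would invoke the First Major Claim of Theorem~\ref{unimodal}, which asserts that for odd $n$, every such $b$ satisfies $b_i\equiv 0\pmod{\kappa}$. Writing $b_i=\kappa m_i$ with $0\le m_i<\kappa$, the factor of $\kappa$ cancels one factor of $n$ in the denominator, so the lattice point becomes $\frac{1}{\kappa}(m_1,\ldots,m_n)\cdot[L_B\mid\1]$, and the remaining congruence reduces to $(m_1,\ldots,m_n)\cdot[L_B\mid\1]\equiv\mathbf{0}\pmod{\kappa}$; equivalently, $(m_1,\ldots,m_n)\in\ker_{\Z/\kappa\Z}[\widetilde L\mid\1]$.

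Next I would apply Lemma~\ref{kernel}, which identifies this kernel as the rank-two $\Z/\kappa\Z$-submodule spanned by $\1$ and $(0,1,\ldots,n-1)$. Since these two generators are linearly independent over $\Z/\kappa\Z$ for $\kappa=n\ge 3$, the kernel contains exactly $\kappa^2=n^2$ elements, which matches the normalized volume of $T_{C_n}$ from Proposition~\ref{properties}(ii). Thus each lattice point $p\in\Pi_{T_{C_n}}\cap\Z^n$ corresponds to a unique pair $(\alpha,\beta)\in(\Z/\kappa\Z)^2$, where the required $m_i$'s are the reduced representatives $(\alpha\1+\beta(0,1,\ldots,n-1))\bmod\kappa$ in $\{0,1,\ldots,\kappa-1\}$. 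Substituting into $\frac{1}{\kappa}(m_1,\ldots,m_n)\cdot[L_B\mid\1]$ yields precisely the formula in the statement.

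For the $h^*$-enumeration, I would read off the height coordinate: since the last column of $[L_B\mid\1]$ is $\1$, the last entry of $p$ equals $\frac{1}{\kappa}\sum_{j=0}^{n-1}\bigl((\alpha+j\beta)\bmod\kappa\bigr)$, which by Lemma~\ref{lem:fpp} is the Ehrhart index $i$ contributing to $h^*_i$. Counting pairs $(\alpha,\beta)$ with this height equal to $i$ then gives the claimed cardinality. The only genuine subtlety is the bookkeeping between the two denominators $\det[L_B\mid\1]=n\kappa$ and $\kappa$, which is handled by the substitution $b_i=\kappa m_i$ supplied by the First Major Claim; once this is in place, the proof is an assembly of previously established facts rather than new combinatorics.
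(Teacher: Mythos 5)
Your proposal is correct, but it runs in the opposite direction from the paper's proof, and it imports a different auxiliary fact. The paper's argument is a counting argument: it observes that $|\Pi_{T_{C_n}}\cap\Z^n|=n\kappa=n^2$ equals the number of pairs $(\alpha,\beta)\in(\Z/\kappa\Z)^2$, shows via Lemma~\ref{kernel} that each reduced combination $(\alpha\1+\beta(0,1,\ldots,n-1))\bmod\kappa$ yields a point of $\Pi_{T_{C_n}}$ (the coefficients lie in $[0,1)$ after reduction), and concludes by cardinality that every lattice point arises this way. You instead start from an arbitrary lattice point, invoke the First Major Claim from the proof of Theorem~\ref{unimodal} to write $b=\kappa m$, reduce the congruence $b\cdot[L_B\mid\1]\equiv\mathbf{0}\pmod{n^2}$ to $m\in\ker_{\Z/\kappa\Z}[\widetilde L\mid\1]$, and then apply Lemma~\ref{kernel} to identify $m$ with a unique pair $(\alpha,\beta)$. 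Your route is more constructive — it establishes the ``only these'' direction directly rather than deducing surjectivity from a count — at the cost of depending on the divisibility claim $b_i\equiv 0\pmod\kappa$, which the paper proves only inside the unimodality argument; the paper's route is shorter given Lemma~\ref{kernel} but leaves the injectivity of $(\alpha,\beta)\mapsto\alpha\1+\beta(0,1,\ldots,n-1)\bmod\kappa$ implicit (it follows by inspecting the first two coordinates, as your element count $\kappa^2$ requires as well). One small caution: for the $h^*$-enumeration you should also record the easy converse that every pair $(\alpha,\beta)$ does produce a point of $\Pi_{T_{C_n}}$ (the reduced $m$ has entries in $\{0,\ldots,\kappa-1\}$, so the coefficients $m_i/\kappa$ lie in $[0,1)$); this is exactly the step the paper makes explicit and your plan leaves tacit.
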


\begin{proof}  
Since $|\Pi_{T_{C_n}} \cap \Z^n | = \sum_{i=0}^{n-1} h^*_i(T_{C_n}) = n \kappa = n^2$, there are $n^2$ lattice points in the fundamental parallelepiped.
Similarly, there are $n^2$ possible linear combinations of $\1$ and $(0,1,2,\ldots,n-1)$ in $\Z/\kappa \Z$.
We show that each such linear combination yields a lattice point.
Recall the sum of the coordinates down each of the first $n-1$ columns of $[L_B \mid \1]$ is $0$. 
Since 
\[
(\alpha \1 + \beta (0, 1, \ldots, n-1)) \cdot [L_B \mid \1] \equiv \mathbf{0} \mod \kappa
\] 
by Lemma \ref{kernel}, it follows that 
\[(\alpha \1 + \beta (0, 1, \ldots, n-1) \mod \kappa) \cdot [L_B \mid \1] \equiv \mathbf{0} \mod \kappa.
\]
Then $\dfrac{(\alpha \1 + \beta (0, 1, \ldots, n-1)) \mod{\kappa}}{\kappa} \cdot [L_B \mid \1]$ is a lattice point.
Since we are reducing the numerators of the entries in the vector of coefficients modulo $\kappa$ prior to dividing by $\kappa$, it follows that each entry in the coefficient vector is greater than or equal to $0$ and strictly less than $1$, and hence the resulting lattice point is an element of $\Pi_{T_{C_n}}$.
\end{proof}

\begin{theorem}\label{primes} 
Consider $C_n$ where $n \ge 3$ is odd. 
Let $n = p_1^{a_1} p_2^{a_2} \cdots p_k^{a_k}$ be the prime factorization of $n$ where $p_1 > p_2 > \cdots > p_k$. Then
\[
h^*(T_{C_n}) = (1, \ldots, 1, h^*_m, h^*_{m+1}, \ldots, h^*_{\frac{n-1}{2}}, \ldots , h^*_{n-m-1}, h^*_{n-m}, 1, \ldots, 1)
\]
where $m=\frac{1}{2}(n - p_1^{a_1} \cdots p_k^{a_k-1})$ and $h_m > 1$. 
Further, if $\Z_n^*$ denotes the group of units of $\Z_n$, we have that $h^*_{(n-1)/2}\geq n\cdot |\Z_n^*|+1$.
In particular, if $n$ is prime, we have
\[
h^*(T_{C_n}) = (1, \ldots, 1, n^2 - n + 1 , 1, \ldots, 1)  
\]
\end{theorem}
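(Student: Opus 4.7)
The plan is to exploit the parametrization from Theorem~\ref{cyclefpp}, which identifies $\Pi_{T_{C_n}}\cap\Z^n$ with pairs $(\alpha,\beta)\in(\Z/n\Z)^2$, and then compute the height function
\[
H(\alpha,\beta)=\frac{1}{\kappa}\sum_{j=0}^{n-1}\bigl((\alpha+j\beta)\bmod \kappa\bigr)
\]
in closed form so that $h^*_i(T_{C_n})$ reduces to a counting problem parametrized by divisors of $n$.

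First I would fix $\beta\in\Z/n\Z$ and set $d=\gcd(\beta,n)$ (with the convention $\gcd(0,n)=n$). The subgroup $\langle\beta\rangle\subseteq\Z/n\Z$ equals $d\Z/n\Z$ and has order $n/d$, so as $j$ runs over $\{0,1,\dots,n-1\}$ the residue $j\beta\bmod n$ attains each element of $d\Z/n\Z$ exactly $d$ times. Writing $\alpha'\equiv\alpha\pmod d$ with $\alpha'\in\{0,1,\dots,d-1\}$, the multiset $\{(\alpha+j\beta)\bmod n\}_{j=0}^{n-1}$ coincides with the list $\alpha',\alpha'+d,\dots,\alpha'+(n/d-1)d$, each appearing $d$ times. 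A direct arithmetic-progression calculation then yields
\[
H(\alpha,\beta)=\alpha'+\frac{n-d}{2},
\]
which is an integer since $n$ is odd (hence so is every divisor $d$ of $n$).

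Next I would count. For each divisor $d\mid n$, the number of $\beta\in\Z/n\Z$ with $\gcd(\beta,n)=d$ is $\phi(n/d)$, and for each such $\beta$ the height $H(\alpha,\beta)=\alpha'+(n-d)/2$ takes each value in the interval $\bigl[(n-d)/2,\,(n+d)/2-1\bigr]$ exactly $n/d$ times as $\alpha$ ranges over $\Z/n\Z$. Summing over divisors gives
\[
h^*_i(T_{C_n})=\sum_{\substack{d\mid n \\ (n-d)/2\,\le\, i\,\le\,(n+d)/2-1}}\phi(n/d)\cdot\frac{n}{d}.
\]
Here the divisor $d=n$ contributes $\phi(1)\cdot 1=1$ to every coordinate $0\le i\le n-1$, recovering the ``baseline'' of ones.

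From this formula the statement is immediate. For any proper divisor $d<n$, the interval $\bigl[(n-d)/2,(n+d)/2-1\bigr]$ is symmetric about $(n-1)/2$ and maximally of the form $[m,n-m-1]$ when $d$ is the largest proper divisor, namely $d=n/p_k=p_1^{a_1}\cdots p_k^{a_k-1}$ where $p_k$ is the smallest prime factor of $n$. Therefore every $i<m$ or $i>n-m-1$ receives only the $d=n$ contribution, giving $h^*_i=1$, and $h^*_m\ge 1+\phi(p_k)p_k>1$. At the central coordinate $i=(n-1)/2$ every divisor of $n$ contributes, so
\[
h^*_{(n-1)/2}=\sum_{d\mid n}\phi(n/d)\cdot\frac{n}{d}\;\ge\;\phi(n)\cdot n+1\;=\;n\,|\Z_n^*|+1.
\]
When $n$ is prime the only divisors are $1$ and $n$, so $h^*_{(n-1)/2}=(n-1)n+1=n^2-n+1$ and all other entries are $1$.

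The main obstacle is the explicit height computation in the first step: one must be careful that the parametrization of Theorem~\ref{cyclefpp} assigns the correct last coordinate (the height) after reducing the coefficients $\alpha\1+\beta(0,1,\dots,n-1)$ modulo $\kappa=n$ entry-by-entry, and then verify that the coset-sum identity above does yield an integer via the oddness of $d$. Once this is done, the remainder is bookkeeping over divisors.
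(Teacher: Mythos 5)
Your proposal is correct, and it rests on the same foundation as the paper's argument: the parametrization of $\Pi_{T_{C_n}}\cap\Z^n$ by pairs $(\alpha,\beta)$ from Theorem~\ref{cyclefpp}, followed by a case analysis on $d=\gcd(\beta,n)$. The difference is in how far you push the height computation. The paper evaluates $h(\alpha,\beta)$ only enough to get what the statement needs: it observes that units $\beta$ all land at height $(n-1)/2$, and for non-unit $\beta$ it computes only the minimal height $h(0,\beta)=(n-d)/2$ over each $\gcd$-class, which locates $m$ and gives the lower bounds. You instead compute $H(\alpha,\beta)=\alpha'+\tfrac{n-d}{2}$ for \emph{every} $\alpha$ (your arithmetic here is right: the multiset $\{(\alpha+j\beta)\bmod n\}_j$ is the coset $\alpha+d\Z/n\Z$ with multiplicity $d$, summing to $n\alpha'+\tfrac{n(n-d)}{2}$), and this yields the exact closed form
\[
h^*_i(T_{C_n})=\sum_{\substack{d\mid n\\ (n-d)/2\le i\le (n+d)/2-1}}\phi(n/d)\cdot\frac{n}{d},
\]
which is strictly more than the theorem asserts: it gives every entry exactly, it makes the symmetry of the vector visible without appealing to reflexivity, and the total-count check $\sum_d \phi(n/d)\cdot\frac{n}{d}\cdot d=n^2$ confirms nothing is missed. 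The one point you rightly flag as needing care — that distinct pairs $(\alpha,\beta)$ give distinct lattice points so the count is a genuine enumeration rather than a lower bound — is supplied by Lemma~\ref{kernel} together with the invertibility of $[L_B\mid\1]$, so your argument is complete as written.
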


\begin{proof}
Keeping in mind that $n=\kappa$ for $C_n$, denote the height of the lattice point
\[
\frac{(\alpha \1 + \beta (0, 1, \ldots, n-1)) \bmod n}{n}\cdot [L_B \mid \1]
\]
in the fundamental parallelepiped by
\[
h(\alpha,\beta):=\frac{1}{n} \sum_{j=0}^{n-1} ((\alpha + j \beta) \bmod n )\, .
\]
Each $\alpha \in \Z / n \Z$ paired with $\beta = 0$ produces a lattice point at a unique height in $\Pi_{T_{C_n}}$, and thus each $h_i^*\geq 1$.
Let $\Z_n^*$ denote the group of units of $\Z_n$.
If $\beta\in\Z_n^*$, then $\beta (0, 1, \ldots, n-1)\bmod n$ yields a vector that is a permutation of $(0, 1, \ldots, n-1)$, and thus for any $\alpha$ we have the height of the resulting lattice point is $(n-1)/2$, proving that $h^*_{(n-1)/2}\geq n\cdot |\Z_n^*|+1$.
Thus, when $n$ is an odd prime, it follows that
\[
h^*(T_{C_n}) = (1, \ldots, 1, n^2 - n + 1 , 1, \ldots, 1)\, .
\]

Now, suppose that $\gcd(\beta,n)=\prod p_i^{b_i}\neq 1$.
Then the order of $\beta$ in $\Z_n$ is $\prod p_i^{a_i-b_i}$, and (after some reductions in summands modulo $n$)
\[
h(\alpha,\beta)=\frac{1}{n}\cdot \prod p_i^{b_i}\cdot \left(\sum_{j=0}^{\prod p_i^{a_i-b_i}-1}\left((\alpha+j\prod p_i^{b_i})\bmod n\right) \right) \, .
\]
Thus, we see that for a fixed $\beta$, the height is minimized (not uniquely) when $\alpha=0$.
In this case, we have
\begin{align*}
h(0,\beta) & =\frac{1}{n}\cdot \prod p_i^{b_i}\cdot \left(\sum_{j=0}^{\prod p_i^{a_i-b_i}-1}\left(j\prod p_i^{b_i}\bmod n\right) \right) \\
& = \frac{1}{n}\cdot \prod p_i^{b_i}\cdot \prod p_i^{b_i}\cdot\left(\sum_{j=0}^{\prod p_i^{a_i-b_i}-1}j \right) \\
& = \frac{n-\prod p_i^{b_i}}{2} \, .
\end{align*}
This value is minimized when $\prod p_i^{b_i}=p_1^{a_1} \cdots p_k^{a_k-1}$, and this height is attained more than once by setting $\beta=p_1^{a_1} \cdots p_k^{a_k-1}$ and $\alpha=0,1,2,\ldots,p_1^{a_1} \cdots p_k^{a_k-1}-1$.
\end{proof}
 
\begin{corollary} \label{cor:oddcyclenotidp}
$T_{C_n}$ is not IDP for odd $n \ge 3$.
\end{corollary}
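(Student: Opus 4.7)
The plan is to exhibit, for odd $n \ge 5$, a specific lattice point of $\Pi_{T_{C_n}}$ at height $m \ge 2$ that cannot be decomposed as a sum of height-one lattice points of $\cone(T_{C_n})$; all ingredients come from Theorem~\ref{cyclefpp} and Theorem~\ref{primes}. For odd $n \ge 5$ with smallest prime factor $p_k$, Theorem~\ref{primes} yields $m = n(p_k-1)/(2 p_k) \ge 2$ (since $p_k \ge 3$ forces $m \ge n/3 \ge 5/3$) together with $h^*_1(T_{C_n}) = 1$. By Theorem~\ref{cyclefpp} the unique height-one lattice point of $\Pi_{T_{C_n}}$ is $(\mathbf{0},1) = \tfrac{1}{n}\mathbf{1}\cdot[L_B\mid\mathbf{1}]$, arising from $(\alpha,\beta) = (1,0)$. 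Thus the height-one lattice points of $\cone(T_{C_n})$ are precisely the $n$ lifted vertices $(v_i,1)$ (with $v_i$ the $i$-th row of $L_B$) together with $(\mathbf{0},1)$.

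Next I would pick the height-$m$ lattice point $q \in \Pi_{T_{C_n}}$ associated to $(\alpha,\beta) = (0, n/p_k)$; its first $n-1$ coordinates are $\tfrac{1}{n}\lambda\cdot L_B$, where $\lambda = (n/p_k)(0,1,2,\ldots,n-1) \bmod n$ attains the values $0$ and $n/p_k$ and hence is not a constant vector. If $T_{C_n}$ were IDP then $q = \sum_{i=1}^n c_i (v_i,1) + d(\mathbf{0},1)$ for some $c_i, d \in \Z_{\ge 0}$ with $\sum c_i + d = m$. Equating first $n-1$ coordinates gives $c \cdot L_B = \tfrac{1}{n}\lambda\cdot L_B$, placing $c - \lambda/n$ in the left kernel of $L_B$. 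Since $L_B = L\cdot A$ with $A$ of full column rank, and since the left kernel of $L$ is $\R\mathbf{1}$, a short computation shows that the left kernel of $L_B$ also equals $\R\mathbf{1}$. Thus $c_i = \lambda_i/n + e$ for a single $e \in \R$, forcing $\lambda$ to be constant modulo $n$ and contradicting its non-constancy.

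The main obstacle is the low case $n = 3$: there $m = 1$ and $h^*_1(T_{C_3}) = 7$, so the step $h^*_1 = 1$ that underpins the linear-algebraic obstruction above breaks down, and a separate argument would be needed to handle that case (indeed, as a two-dimensional lattice polygon, $T_{C_3}$ admits a unimodular triangulation, so some hypothesis in the stated corollary appears to require adjustment for the very smallest odd $n$).
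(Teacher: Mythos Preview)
Your argument for odd $n\ge 5$ follows the same route as the paper's. Both hinge on the fact (from Theorem~\ref{primes}) that $h^*_1(T_{C_n})=1$, which forces the lattice points of $T_{C_n}$ to be exactly its $n$ vertices together with the origin; one then shows that some lattice point of $\Pi_{T_{C_n}}$ at height $\ge 2$ cannot be written as a nonnegative integer combination of these $n+1$ height-one points. The paper phrases the last step globally---any such combination lying in $\Pi_{T_{C_n}}$ has all barycentric coordinates $<1$, so no vertex can appear and the point must equal $(\mathbf 0,t)$, which accounts for only $n$ of the $n^2$ parallelepiped points---while you pick a specific point $q$ and carry out the equivalent kernel computation for $L_B$ explicitly. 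The content is the same.

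Your caveat about $n=3$ is exactly right and exposes an oversight in the paper. For $n=3$ Theorem~\ref{primes} gives $h^*(T_{C_3})=(1,7,1)$, so $h^*_1=7$, not $1$; the assertion in the paper's proof that ``Theorem~\ref{primes} yields $h^*_1(T_{C_n})=1$ for odd $n\ge 3$'' fails at $n=3$. Indeed $T_{C_3}\subset\R^2$ is a lattice polygon, hence admits a unimodular triangulation and is IDP. The corollary should read ``for odd $n\ge 5$'', and both your proof and the paper's are valid in that range.
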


\begin{proof} 
Theorem~\ref{primes} yields $h^*_1(T_{C_n}) = 1$ for odd $n \ge 3$.
It is known \cite[Corollary 3.16]{BeckRobinsCCD} that for an integral convex $d$-polytope $\mathcal P$, $h^*_1(\mathcal P) = |\mathcal{P}\cap \Z^n| - (d+1)$.
In this case,
\[
|T_{C_n}\cap \Z^n| = h^*_1(T_{C_n}) + (n-1) + 1 = n+1
\]
is the number of lattice points in $T_{C_n}$.
In particular, the lattice points consist of the $n$ vertices of $T_{C_n}$ and the origin.
Then $\Pi_{T_{C_n}} \cap \{x\mid x_{n}=1\} \cap \Z^n = (0,0, \ldots, 0, 1)$.
If $T_{C_n}$ is IDP, then every lattice point in $\Pi_{T_{C_n}}$ is of the form $(0, \ldots, 0, 1) + \cdots + (0, \ldots, 0, 1),$ which is not true by Proposition \ref{cyclefpp}.
The result follows.
\end{proof}

\section{Complete Graphs}

The simplex $T_{K_n}$ is a generalized permutohedron, where a \emph{permutohedron} $P_n(x_1, \ldots ,x_n)$ for $x_i \in \R$ is the convex hull of the $n!$ points obtained from $(x_1, \ldots , x_n)$ by permutations of the coordinates.
For $K_n$, the Laplacian matrix has diagonal entries equal to $n-1$ and all other entries equal to $-1$.
Then $\conv{L(n)^T} = P_n(n-1, -1, \ldots, -1) \cong P_n(n, 1, \ldots , 1)$. 
Many properties of generalized permutahedra are known \cite{Postnikov}.
While some of the findings in this section follow from these general results, for the sake of completeness we will prove all results in this section from first principles.

\subsection{Reflexivity, Triangulations, and $h^*$-Unimodality}

\begin{theorem}\label{complete} 
The simplices $T_{K_n}$ are reflexive for $n \ge 1$. 
\end{theorem}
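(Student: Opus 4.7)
The plan is to exhibit $T_{K_n}$ as reflexive by directly computing the vertices of the dual polytope and checking that they are integer, as suggested in the remark following Theorem \ref{characterization}. By Proposition \ref{properties}(i), it suffices to work with the unimodularly equivalent simplex $P := \conv{L(n)^T}$, whose vertices are the rows of $L(n)$. Since the Laplacian of $K_n$ has diagonal entries $n-1$ and off-diagonal entries $-1$, this row description is especially clean: for $i \in [n-1]$ the $i$\ss{th} row is $r_i = -\1 + n e_i \in \R^{n-1}$, and the last row is $r_n = -\1$. Since the columns of $L$ sum to zero, the origin is the centroid of $\{r_1,\ldots,r_n\}$ and lies in the interior of $P$, so duality is available.

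The dual is then
\[
P^{*} = \left\{ x \in \R^{n-1} : n x_i - \sum_{j=1}^{n-1} x_j \le 1 \text{ for } i \in [n-1], \text{ and } -\sum_{j=1}^{n-1} x_j \le 1 \right\}.
\]
Because $P$ is a simplex with $n$ facets, $P^{*}$ has exactly $n$ vertices, each obtained by turning $n-1$ of these $n$ inequalities into equalities. I would split into two cases according to which constraint is dropped.

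For $k \in [n-1]$, omitting the $k$\ss{th} inequality leaves the equations $n x_i - \sum_j x_j = 1$ for $i \neq k$ together with $-\sum_j x_j = 1$. Substituting the second into the first yields $x_i = 0$ for all $i \in [n-1]\setminus\{k\}$, and then $\sum_j x_j = x_k = -1$ forces $x_k = -1$, giving the vertex $-e_k \in \Z^{n-1}$. Omitting instead the inequality $-\sum_j x_j \le 1$ leaves the equations $n x_i - \sum_j x_j = 1$ for every $i \in [n-1]$; subtracting any two shows all coordinates are equal to a common value $t$, and substitution gives $-(n-1)t + nt = t = 1$, so the remaining vertex is $\1 \in \Z^{n-1}$.

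All $n$ vertices of $P^{*}$ are lattice points, so $P^{*}$ is a lattice polytope and $P$, hence $T_{K_n}$, is reflexive. The main task in this approach is really only to set up the dual correctly and exploit the $S_{n-1}$-symmetry of the row system, which collapses each of the $n$ linear systems to an immediate calculation; there is no serious obstacle. An alternative route through Theorem \ref{characterization} and Lemma \ref{lem:completebridge} would reach the same conclusion but requires more intricate cofactor bookkeeping, whereas the direct dualization above is essentially a one-line check once the rows of $L(n)$ are written down.
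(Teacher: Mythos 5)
Your proof is correct and takes essentially the same route as the paper: both arguments establish reflexivity by directly exhibiting an integral half-space description $\{x : Ax \le \1\}$, equivalently checking that the $n$ vertices of the dual are lattice points. The only difference is cosmetic — you work in the $\conv{L(n)^T}$ coordinates (where the vertices are $-\1 + ne_i$ and $-\1$, and the dual vertices come out as $-e_k$ and $\1$) and solve for the dual vertices, whereas the paper works with $L_B$ and verifies a guessed facet-normal matrix $A$ via $A(i\mid\emptyset)\,r_i = \1$.
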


\begin{proof} 
Observe $L_B$ is an $n \times (n-1)$ integer matrix of the form

\[
L_B= \begin{bmatrix}
(n-1) & (n-2) & (n-3) & \cdots & \cdots & 1 \\
-1 & (n-2) & (n-3) & \cdots & \cdots & 1 \\
-1 & -2 & (n-3) & \cdots & \cdots & \vdots\\
-1 & -2 & -3 & (n-4) & \cdots & \vdots \\
\vdots & \vdots & \vdots & -4 & \ddots & \vdots \\
\vdots & \vdots & \vdots & \vdots &  & 1 \\
-1 & -2 & -3 & \cdots & \cdots & -(n-1)
\end{bmatrix} 
\]
To prove $T_{K_n}$ is reflexive, we show $T_{K_n} = \{ x \in \R^{n-1} \mid A x \le \1 \}$ for some $A \in \Z^{n \times (n-1)}$. 
We claim that $A$ has the following form:
\[ 
A = 
\left[
\begin{array}{rrrrr}
-1 & 0 & 0 & \cdots & 0 \\
1 & -1 & 0 &  & \vdots \\
0 & 1 & -1 & \ddots & \vdots \\
\vdots & \ddots & \ddots & \ddots &  0 \\
\vdots &  & \ddots & 1 & -1 \\
0 & \cdots & \cdots & 0 & 1
\end{array} 
\right]
\in \{0, \pm 1\}^{n \times (n-1)}. 
\]
Let $r_i$ be the $i$\ss{th} row of $L_B$. 
Observe that $A(i \mid \emptyset) r_i = \1$ for each $1 \le i \le n$. 
Then $\{r_i\}_{i=1}^n$ is a set of intersection points of defining hyperplanes of $T_{K_n}$ taken $(n-1)$ at a time. 
Notice $\rk{A} = n-1$, and further, each matrix $A(i \mid \emptyset)$ has full rank.
This implies $\{r_i \}_{i=1}^n$ is the set of unique intersection points. 
Thus $\{x \mid Ax \le \1 \}=\conv{r_1, r_2, \cdots, r_n} = T_{K_n}$ shows that $T_{K_n}$ is reflexive.
\end{proof}


\begin{prop}\label{triangulation}
The simplex $T_{K_n}$ has a regular unimodular triangulation. \end{prop}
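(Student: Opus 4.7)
The plan is to reduce $T_{K_n}$ to a much more familiar polytope. First, I would apply Proposition~\ref{equivalence} to replace $T_{K_n}$ by the unimodularly equivalent representative $\conv{L(n)^T}$. For $K_n$, the Laplacian is $L = nI - J$, so the rows of $L(n) \in \Z^{n\times(n-1)}$ are $n e_i - \1$ for $i \in [n-1]$ and $-\1$, where $e_i$ and $\1$ live in $\R^{n-1}$. Translating by the lattice vector $\1$ sends these vertices to $ne_1, \ldots, ne_{n-1}$, and $0$. Since lattice translation is unimodular, this shows
\[
T_{K_n} \;\cong\; n\Delta_{n-1} \;:=\; \conv{0, ne_1, \ldots, ne_{n-1}} \, ,
\]
and the problem reduces to exhibiting a regular unimodular triangulation of the dilated standard simplex.

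For the dilated simplex, the plan is to use the \emph{alcove triangulation}. Embed $n\Delta_{n-1}$ in the hyperplane $\{x \in \R^n : \sum x_i = n\}$ so that its defining inequalities are the integer-coefficient conditions $x_i \geq 0$. Consider the set of all affine hyperplanes of the form $x_i = k$ for $i \in [n]$ and $k \in \Z$. Restricted to our simplex, these hyperplanes slice $n\Delta_{n-1}$ into pieces, each of which is a unit hypersimplex-like cell; after further slicing by the finer hyperplane arrangement coming from sums $x_{i_1} + \cdots + x_{i_j} = k$, every maximal cell becomes a unimodular simplex whose vertices are lattice points. This is exactly the construction of Lam--Postnikov for alcoved polytopes in type $A$, and the count of maximal cells matches the normalized volume $n \cdot \kappa(K_n) = n \cdot n^{n-2} = n^{n-1}$ by Cayley's formula (as computed via Proposition~\ref{properties}(ii)).

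For regularity, I would exhibit an explicit strictly convex piecewise-linear lifting. A convenient choice is the quadratic height function $h(v) = \sum_i v_i^2$ on the lattice points of $n\Delta_{n-1}$ inside the embedded hyperplane: the lower envelope of the lifted lattice points coincides with the alcove subdivision described above. Regularity then follows from the construction, since the domains of linearity of a piecewise-linear convex function always form a regular polyhedral subdivision.

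The main obstacle is verifying that the alcove subdivision is actually a triangulation by unimodular simplices, rather than producing some non-simplicial cells. I would handle this by a direct computation: each maximal cell of the alcove subdivision is a lattice translate of a fundamental alcove in type $A_{n-1}$, whose vertices are $0, e_{i_1}, e_{i_1}+e_{i_2}, \ldots, e_{i_1}+\cdots+e_{i_{n-1}}$ for some permutation $(i_1, \ldots, i_{n-1})$ — the associated matrix of edge vectors is upper-triangular with $\pm 1$ diagonal, so the simplex is unimodular. Counting permutations times the appropriate number of integer translates recovers exactly $n^{n-1}$ maximal simplices, confirming that our subdivision exhausts $n\Delta_{n-1}$ with unimodular pieces, as needed.
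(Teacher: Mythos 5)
Your opening reduction is correct and is a genuinely different route from the paper's. For $K_n$ the rows of $L(n)$ are $ne_i-\1$ for $i\in[n-1]$ together with $-\1$, so translating by $\1$ gives $T_{K_n}\cong n\Delta_{n-1}$ (consistent with the normalized volume $n\cdot n^{n-2}=n^{n-1}$ and with the Ehrhart polynomial $\binom{tn+n-1}{n-1}$ computed later in the paper). The paper instead observes that the facet-normal matrix $A$ found in the proof of Theorem~\ref{complete} is the signed vertex-edge incidence matrix of a path, hence totally unimodular, and cites a theorem that polytopes cut out by totally unimodular systems admit regular unimodular triangulations. The two arguments see the same underlying geometry (a dilated unimodular simplex is the prototypical alcoved polytope), but yours requires you to actually build and certify the triangulation, and that is where the proposal has two concrete gaps.

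First, slicing by \emph{all} subset-sum hyperplanes $x_{i_1}+\cdots+x_{i_j}=k$ is strictly finer than the type $A$ alcove arrangement, which in the coordinates $y_i=x_1+\cdots+x_i$ consists only of the hyperplanes $y_i-y_j=k$, i.e.\ sums of \emph{consecutive} coordinates. A non-consecutive sum such as $x_1+x_3=y_1+(y_3-y_2)$ is not constant modulo the alcove data: for $n=4$ the points with $(y_1,y_2,y_3)=(0.1,1.05,1.9)$ and $(0.8,1.05,1.9)$ lie in the same alcove of $4\Delta_3$ but have $x_1+x_3$ equal to $0.95$ and $1.65$ respectively, so the hyperplane $x_1+x_3=1$ cuts that alcove's interior and produces cells with non-lattice vertices. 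Second, and more seriously, the height function $h(v)=\sum_i v_i^2$ does not certify regularity of the alcove triangulation: its lower envelope over the lattice $\{v\in\Z^n:\sum v_i=n\}$ is the Delaunay subdivision of (a translate of) the root lattice $A_{n-1}$, whose cells are hypersimplices. Already for $n=4$ the six lattice points $(0,1,1,0)+e_i+e_j$, $\{i,j\}\subset[4]$, lie inside $4\Delta_3$, are equidistant from $(\tfrac12,\tfrac32,\tfrac32,\tfrac12)$, and hence all lie on one lower face of the lift, which is an octahedron rather than a simplex. So your lift certifies only the coarser hypersimplicial subdivision. Both gaps are repairable: use only the consecutive-sum (affine braid) hyperplanes, and certify regularity either by citing Lam--Postnikov directly or by taking as height function the sum, over the finitely many slicing hyperplanes $\ell(x)=k$ meeting the polytope, of the convex piecewise-linear functions $\lvert \ell(x)-k\rvert$; this sum is convex with domains of linearity exactly the alcoves, which is the standard regularity certificate for arrangement-induced (``dicing'') triangulations.
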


\begin{proof} 
Since the matrix of the facet normals is a signed vertex-edge incidence matrix for a path, it is totally unimodular.
Thus, it follows from \cite[Theorem 2.4]{regulartriangulations} that $T_{K_n}$ has a regular unimodular triangulation. 
\end{proof}

\begin{corollary} \label{cor:completeidp}
The simplex $T_{K_n}$ is IDP. 
\end{corollary}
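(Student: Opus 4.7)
The plan is to deduce the corollary immediately from Proposition~\ref{triangulation} by invoking the standard fact, already cited in Section~\ref{sec:ehrhart}, that a lattice polytope admitting a unimodular triangulation is IDP. So the entire proof is a one-line appeal, and the substantive work has already been carried out in establishing the regular unimodular triangulation of $T_{K_n}$.

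For completeness, I would briefly recall why unimodular triangulations imply IDP, in case the reader wants to see the mechanism. Given any $t \in \Z_{>0}$ and any lattice point $p \in tT_{K_n} \cap \Z^{n-1}$, the point $p$ lies in $tS$ for some simplex $S$ in the unimodular triangulation of $T_{K_n}$. Since $S$ is unimodular, coning over $S$ and lifting to height coordinates shows that every lattice point of $\cone{S}$ is a nonnegative integer combination of the lifted vertices $(v_i,1)$ of $S$; in particular, $p$ (lifted to height $t$) is expressible as a sum of $t$ lifted vertices of $S \subseteq T_{K_n}$, yielding the desired decomposition $p = p_1 + \cdots + p_t$ with each $p_i$ a vertex of $S$ and hence a lattice point of $T_{K_n}$.

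The only potentially non-trivial step is the underlying implication ``unimodular triangulation $\Rightarrow$ IDP,'' but this is classical and was already flagged in the background section, so I would simply cite it rather than re-prove it. The main obstacle in this entire line of argument was producing the regular unimodular triangulation itself, and that obstacle was overcome in Proposition~\ref{triangulation} via the total unimodularity of the facet-normal matrix. Consequently the corollary is essentially a one-line consequence and requires no new ideas.
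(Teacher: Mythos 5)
Your proposal is correct and follows the same route as the paper: both deduce the corollary immediately from Proposition~\ref{triangulation} via the standard implication that a (regular) unimodular triangulation forces the integer decomposition property, with the paper phrasing the mechanism in terms of $\cone{T_{K_n}}$ being a union of unimodular cones with degree-one generators and you phrasing it by locating a lattice point of $tT_{K_n}$ in a dilated unimodular simplex. No gap.
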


\begin{proof} 
If $T_{K_n}$ admits a unimodular triangulation, it follows that $T_{K_n}$ is IDP because $\text{cone}(T_{K_n})$ is a union of unimodular cones with lattice-point generators of degree $1$.  
\end{proof}

Theorem~\ref{complete} implies that $h^*(T_{K_n})$ is symmetric.
The following theorem implies that if $\mathcal P$ is reflexive and admits a regular unimodular triangulation, then $h_{\mathcal P}^*$ is unimodal.

\begin{theorem}[Athanasiadis \cite{athanasiadisstable}]\label{unimodal2}
Let $\mathcal P$ be a $d$-dimensional lattice polytope with $h^*_{\mathcal P} = (h_0^*, h_1^*, \ldots , h_d^*)$. 
If $\mathcal P$ admits a regular unimodular triangulation, then $h_i^* \ge h_{d-i+1}^*$ for $1 \le i \le \lfloor (d+1)/2 \rfloor$,
\[
h_{\lfloor (d+1)/2 \rfloor}^* \ge \cdots \ge h_{d-1}^* \ge h_d^*
\]
and 
\[
h_i^* \le \binom{h_1^* + i -1}{i}
\]
for $0 \le i \le d$.
\end{theorem}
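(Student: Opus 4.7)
The plan is to lift the inequalities from the Ehrhart/Hilbert-series setting to the combinatorial $h$-vector of the triangulation, and then to commutative algebra. Let $\Delta$ be the assumed regular unimodular triangulation of $\mathcal P$. The first step is the classical observation that the $h^*$-vector of $\mathcal P$ equals the combinatorial $h$-vector of $\Delta$: unimodularity ensures that every simplex of $\Delta$ contributes exactly one lattice point per dilate at each relative height, so the standard inclusion–exclusion identity for $\Ehr{\mathcal P}$ reduces to the identity
\[
\sum_{i=0}^d h_i^*(\mathcal P)\, z^i \;=\; \sum_{i=0}^d h_i(\Delta)\, z^i \, .
\]
After this identification, the three inequalities become statements about $h(\Delta)$.

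Next I would pass from $\Delta$ to its Stanley--Reisner ring $k[\Delta]$. Since the triangulation is regular it is shellable, so $k[\Delta]$ is Cohen--Macaulay. Choose a linear system of parameters $\theta_1,\ldots,\theta_d$ and form the Artinian reduction $A := k[\Delta]/(\theta_1,\ldots,\theta_d)$; by Cohen--Macaulayness, the Hilbert function of $A$ is precisely $(h_0,h_1,\ldots,h_d)$. The bound
\[
h_i^*(\mathcal P) \le \binom{h_1^* + i - 1}{i}
\]
now follows immediately from Macaulay's theorem, since $A$ is a standard graded $k$-algebra generated in degree one and Macaulay's inequalities bound the growth of any such Hilbert function in terms of $h_1$.

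For the two order-theoretic inequalities, the plan is to exploit additional structure coming from the fact that $\Delta$ triangulates a polytope (not merely a sphere or ball). The descending chain $h_{\lfloor (d+1)/2 \rfloor}^* \ge \cdots \ge h_d^*$ should follow from the existence of a generic linear form $\omega \in A_1$ whose multiplication map $\omega : A_j \to A_{j+1}$ is surjective for $j \ge \lfloor (d+1)/2 \rfloor$; this is the ``second half'' of a weak-Lefschetz property and is accessible through Stanley's analysis of shellings together with the structure of interior versus boundary faces of $\Delta$. The cross-symmetry inequality $h_i^* \ge h_{d-i+1}^*$ for $1\le i \le \lfloor(d+1)/2\rfloor$ is the deepest part: the strategy is to produce an injection $A_{d-i+1} \hookrightarrow A_i$ realized by multiplication by a suitable element in degree $d-2i+1$, built from the generators corresponding to interior faces of $\Delta$ (equivalently, lattice points in the interior of $\mathcal P$). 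The needed input is Stanley's local $h$-polynomial decomposition
\[
h(\Delta; z) \;=\; \sum_{F \in \Delta} \ell_F(\mathrm{link}_\Delta F;\, z) \cdot z^{\dim F + 1},
\]
together with the non-negativity and symmetry of the local $h$-polynomials for regular triangulations.

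I expect the main obstacle to be exactly this last inequality: the Macaulay bound and the descending chain are relatively standard Cohen--Macaulay consequences, but $h_i^* \ge h_{d-i+1}^*$ is not formal and really requires that the triangulation be both regular and unimodular, invoking Athanasiadis's comparison of local $h$-polynomials of $\Delta$ with those of its restriction to the boundary. Handling that comparison carefully, and checking that the resulting injection survives the Artinian reduction, is where the real work lies.
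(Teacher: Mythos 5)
First, a structural point: the paper does not prove this statement. It is imported verbatim from Athanasiadis \cite{athanasiadisstable} and used as a black box to deduce Corollary~\ref{cor:completeunimodal}, so there is no in-paper argument to compare yours against; I can only measure your proposal against the argument in the cited source. Your opening two ingredients are correct and are exactly how that argument begins: for a unimodular lattice triangulation $\Delta$ of $\mathcal P$ one has $h^*(\mathcal P)=h(\Delta)$; regularity gives shellability, hence Cohen--Macaulayness of $k[\Delta]$; and Macaulay's theorem applied to the Artinian reduction yields $h_i^*\le \binom{h_1^*+i-1}{i}$. (Minor slip: $\Delta$ is $d$-dimensional, so $k[\Delta]$ has Krull dimension $d+1$ and you need $d+1$ parameters, not $d$.)

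The genuine gap is in your plan for the two order inequalities. You propose to obtain the descending chain from surjectivity of multiplication by a generic linear form $\omega\colon A_j\to A_{j+1}$ for $j\ge\lfloor(d+1)/2\rfloor$, and the cross inequality $h_i^*\ge h_{d-i+1}^*$ from an injection $A_{d-i+1}\hookrightarrow A_i$ given by multiplication by a single element of degree $d-2i+1$. Both are Lefschetz-type properties of the Artinian reduction of the face ring of a triangulated ball, and neither is known in this generality; indeed the present paper itself remarks (before Theorem~\ref{unimodal}) that the weak-Lefschetz approach fails even for some reflexive IDP simplices. The local $h$-polynomial decomposition you invoke is a tool for monotonicity of $h$-vectors under subdivision and does not by itself produce these inequalities either. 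The mechanism that actually works is more elementary and avoids Lefschetz elements entirely: since $\Delta$ is a shellable $d$-ball, the ideal $I\subseteq k[\Delta]$ generated by the faces interior to $\mathcal P$ is the canonical module of $k[\Delta]$, and $k[\Delta]/I\cong k[\Gamma]$ where $\Gamma$ is the restriction of $\Delta$ to $\partial\mathcal P$, a Cohen--Macaulay complex. Passing to Artinian reductions in the exact sequence $0\to I\to k[\Delta]\to k[\Gamma]\to 0$ and using the reciprocity $\dim \overline{I}_i=h_{d+1-i}(\Delta)$ gives $h_i(\Delta)-h_{d-i+1}(\Delta)$ as a Hilbert-function difference coming from the boundary complex, whence nonnegativity; the descending chain is extracted from the same ball-versus-boundary-sphere bookkeeping. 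So your proposal correctly isolates where the work lies but does not supply an argument for that part that would go through as stated.
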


\begin{corollary}\label{cor:completeunimodal}
For each $n \ge 2$, $h^*(T_{K_n})$ is unimodal.
\end{corollary}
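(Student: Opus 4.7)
The plan is to assemble three results already established in the paper: reflexivity of $T_{K_n}$ (Theorem~\ref{complete}), the existence of a regular unimodular triangulation (Proposition~\ref{triangulation}), and Athanasiadis's inequalities (Theorem~\ref{unimodal2}). The first ingredient, via Hibi's palindromic criterion stated in Section~\ref{sec:ehrhart}, gives $h_i^*(T_{K_n}) = h_{d-i}^*(T_{K_n})$ for all $i$, where $d = n-1$. So the $h^*$-vector is already known to be symmetric, and the content of the corollary is really that the common value of each pair weakly increases as we move toward the middle.

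To get that monotonicity on the left half, I would apply the first inequality in Theorem~\ref{unimodal2}: for $1 \le i \le \lfloor (d+1)/2 \rfloor$, one has $h_i^* \ge h_{d-i+1}^*$. Using the palindromic identity on the right-hand side, $h_{d-i+1}^* = h_{(d)-(d-i+1)}^* = h_{i-1}^*$, and therefore $h_i^* \ge h_{i-1}^*$ throughout the initial range. Combined with the tail inequalities $h_{\lfloor (d+1)/2 \rfloor}^* \ge h_{\lfloor (d+1)/2 \rfloor + 1}^* \ge \cdots \ge h_{d-1}^* \ge h_d^*$ provided by the second clause of Theorem~\ref{unimodal2}, we obtain the peak at index $\lfloor (d+1)/2 \rfloor$ required by the definition of unimodal.

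I do not expect any real obstacle here; every ingredient has been proved just above the corollary, and the calculation reduces to one use of Hibi's symmetry to reinterpret Athanasiadis's asymmetric bound as a genuine consecutive-increase on the left half of the vector. The only minor bookkeeping point is verifying that the indexing conventions of Theorem~\ref{unimodal2} line up with the reflexive symmetry, which is immediate once one writes out both ranges explicitly.
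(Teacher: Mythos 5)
Your proposal is correct and is exactly the argument the paper intends: the corollary follows by combining Theorem~\ref{complete} (reflexivity, hence Hibi symmetry of $h^*$), Proposition~\ref{triangulation} (regular unimodular triangulation), and Theorem~\ref{unimodal2}, with the symmetry used to convert Athanasiadis's inequality $h_i^* \ge h_{d-i+1}^* = h_{i-1}^*$ into the increasing left half. The paper leaves these index manipulations implicit, and your write-up simply makes them explicit; there is no gap.
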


\subsection{$h^*(T_{K_n})$ and Weak Compositions}

The following is a classification of all lattice points in $\cone{T_{K_n}}$.

\begin{theorem}\label{K_n cone} 
The lattice points at height $h$ in $\cone{T_{K_n}}$ are in bijection with weak compositions of $h\cdot n$ of length $n$, where the height of the lattice point in the cone is given by the last coordinate of the lattice point. 
\end{theorem}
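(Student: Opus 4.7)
The plan is to reduce to a more convenient representative of the unimodular equivalence class. By Proposition~\ref{equivalence}, $T_{K_n}$ is unimodularly equivalent to $\conv{L(n)^T}$, whose vertices (for the complete graph) are $v_i := n e_i - \1$ for $i = 1, \ldots, n-1$ and $v_n := -\1$, all in $\R^{n-1}$. The unimodular map relating these two simplices extends to a lattice isomorphism on the cones that preserves the last coordinate, so it suffices to enumerate lattice points in $\cone{\conv{L(n)^T}}$ at each height.

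Next, I would parameterize points in this cone uniquely. Since the lifted vertices $(v_i, 1) \in \R^n$ form a basis, every $p \in \cone{\conv{L(n)^T}}$ has a unique expression $p = \sum_{i=1}^n \lambda_i (v_i, 1)$ with $\lambda_i \in \R_{\geq 0}$. A short calculation using the explicit description of $v_i$ shows that $p = (k_1 - h,\, k_2 - h,\, \ldots,\, k_{n-1} - h,\, h)$, where $k_i := n\lambda_i$ and $h := \sum_i \lambda_i$ is the height.

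It follows immediately that $p$ is a lattice point at integer height $h$ if and only if each $k_i$ is a non-negative integer, and in that case $\sum_i k_i = n \sum_i \lambda_i = nh$. The non-negativity of the $k_i$ is equivalent to the condition $\lambda_i \geq 0$ that characterizes membership in the cone. Thus the map sending a weak composition $(k_1, \ldots, k_n)$ of $nh$ into $n$ parts to the lattice point $(k_1 - h, \ldots, k_{n-1} - h, h)$ is the desired bijection, and unwinding the unimodular equivalence transports this bijection back to $\cone{T_{K_n}}$.

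The main technical issue is simply being careful with the unimodular equivalence step: one must verify that the transformation from Proposition~\ref{equivalence} induces a lattice-preserving bijection between the two cones that fixes the height coordinate, so that the count of lattice points at each height is genuinely preserved. Once that is in hand, everything else is an explicit linear-algebra parameterization with no further obstacles.
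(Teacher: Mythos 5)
Your proof is correct. The underlying bijection is the same one the paper uses---a lattice point $\sum_i \lambda_i(v_i,1)$ at height $h$ is matched with the integer vector $n\lambda$, a weak composition of $nh$ of length $n$---but the execution is genuinely different. The paper works directly with $L_B$: it defines $\Phi(c)=\tfrac1n c\cdot[L_B\mid\1]$, checks integrality coordinate by coordinate, and then establishes bijectivity by explicitly computing $[L_B\mid\1]^{-1}$ and verifying nonnegativity of the parts of $c$ via the facet inequalities $Ax\le t\1$ from Theorem~\ref{complete}. You instead pass to the representative $\conv{L(n)^T}$, whose vertices $ne_i-\1$ ($i\le n-1$) and $-\1$ make the parameterization transparent: the point is literally $(k_1-h,\ldots,k_{n-1}-h,h)$ with $k_i=n\lambda_i$, so integrality of the point is visibly equivalent to integrality of the $k_i$, nonnegativity of the parts is exactly cone membership, and no matrix inversion or appeal to the facet description is needed. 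The one step you must supply---and you correctly flag it---is that the equivalence $L(n)\cdot A(n\mid\emptyset)=L_B$ from Proposition~\ref{properties}(i) (that, rather than Proposition~\ref{equivalence}, is the precise reference) is induced by right multiplication by a unimodular matrix $U$, so the block map $\mathrm{diag}(U,1)$ identifies the two cones, preserves $\Z^n$, and fixes the last coordinate, whence the height-by-height counts agree. With that observation in place your argument is complete, and is arguably cleaner than the paper's.
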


\begin{proof}
Recall the $t$\ss{th} dilate of the polytope $T_{K_n} \subset \R^n$ is given by 
\[  \cone{T_{K_n}} \cap \{z\mid z_n = t\} = \{ \mathbf{\lambda} \cdot [L_B\mid \1] \mid \lambda \in \R_{\ge 0}^n, \sum_{i=1}^n \lambda_i = t \},\] 
since the last coordinate of the lattice point is given by $\sum_{i=1}^n \lambda_i$. 
Notice each lattice point in $\cone{T_{K_n}}$ corresponds uniquely to a lattice point in $tT_{K_n}$ where $t$ is the last coordinate of the point.  
Then the lattice points of $tT_{K_n}$ are all $x = \lambda \cdot [L_B \mid \1] \in \Z^n$ where $0 \le \lambda_i = \dfrac{b_i}{\kappa n}$ for $b_i \in \Z_{\ge 0}$ and $\sum_{i=1}^n \lambda_i = t.$ 
Define the map
\begin{equation*}
\begin{split}
\Phi: \{ \text{length $n$ weak compositions of $tn$} \} &\to \{ \text{lattice points of $tT_{K_n}$} \} \\
c &\mapsto \dfrac{1}{n} c \cdot [L_B \mid \1] \\
\end{split}
\end{equation*}
To show $\Phi(c)$ is a lattice point, consider
\[\Phi(c) = \dfrac{1}{n}[c_1, c_2, \cdots, c_n] \cdot \begin{bmatrix}
(n-1) & (n-2) & (n-3) &  \cdots & 1 & 1 \\
-1 & (n-2) & (n-3) &  \cdots & 1 & 1\\
-1 & -2 & (n-3) &  \cdots & 1 & 1\\
-1 & -2 & -3 &  \cdots & 1 & 1\\
\vdots & \vdots &  \vdots & \ddots & \vdots & \vdots\\
-1 & -2 & -3 &  \cdots & -(n-1) & 1
\end{bmatrix} = \begin{bmatrix}
x_1 \\
x_2 \\
x_3 \\
\vdots \\
\vdots \\
x_n
\end{bmatrix} \, .
 \]
Since $c$ is a weak composition of $tn$, $0 \le \dfrac{c_i}{n} \le t$ for all $i$ and $\frac{1}{n} \sum_{i=1}^n c_i = t$.  
Multiplying the above expression yields $x_i = \left(\sum_{j=1}^i c_j \right) -it$ for all $1 \le i \le n-1$ and $x_n = t$.  
This implies $x \in \Z^n$, which shows $x$ is a lattice point in $tT_{K_n}$. 

To show $\Phi$ is a bijection, we consider the inverse \begin{equation*}
\begin{split}
\Phi^{-1}: \{ \text{lattice points of $tT_{K_n}$} \} &\to \{ \text{length $n$ weak compositions of $tn$} \} \\ 
x &\mapsto n x \cdot [L_B \mid \1]^{-1} \\
\end{split}
\end{equation*}

It can be shown that 
\[
[L_B\mid \1]^{-1} = \dfrac{1}{n} \begin{bmatrix}
1 & -1 & 0 & \cdots & \cdots & 0 \\
0 & 1 & -1 & \ddots &  & \vdots \\
\vdots & 0 & 1 & -1 & \ddots & \vdots \\
\vdots &  & \ddots & \ddots & \ddots & 0 \\
0 & \cdots & \cdots & 0 & 1 & -1 \\
1 & \cdots & \cdots & \cdots & 1 & 1
\end{bmatrix} \, .
\]
Thus
\begin{equation*}
\begin{split}
c &= n x \cdot [L_B \mid \1]^{-1} \\
&= (x_1 + x_n, -x_1 + x_2 + x_n, -x_2 + x_3 + x_n, \ldots , -x_{n-2} + x_{n-1} + x_n, -x_{n-1} + x_n) \\
&= (x_1 + t , -x_1 + x_2 + t, -x_2 + x_3 + t,  \ldots, -x_{n-2} + x_{n-1} + t, -x_{n-1} + t ) \, .
\end{split}
\end{equation*}
It remains to show that $c$ is a weak composition of $tn$. 
First note that $\sum_{i =1}^n c_i = \sum_{i =1}^n t = tn$. 
Next we show each $c_i \ge 0$.  
This is equivalent to $x_1 \ge -t, -x_{n-1} \ge -t,$ and $-x_i + x_{i+1} \ge -t$ for all $2 \le i \le (n-2)$. 
Recall from the hyperplane description of $tT_{K_n}$ that $x$ is lattice point if it satisfies
\[
\begin{bmatrix}
-1 & 0 & \cdots & \cdots & 0 \\
1 & -1 & \ddots &  & \vdots \\
0 & 1 & -1 & \ddots & \vdots \\
\vdots & \ddots & \ddots & \ddots &  0 \\
\vdots &  & \ddots & 1 & -1 \\
0 & \cdots & \cdots & 0 & 1
\end{bmatrix} \cdot \begin{bmatrix}
x_1 \\
x_2 \\
\vdots \\
\vdots \\
x_{n-1} 
\end{bmatrix} \le \begin{bmatrix}
t \\
t \\
t \\
\vdots \\
\vdots \\
t
\end{bmatrix}
\]
These inequalities show $c$ is a weak composition of $tn$ of length $n$.  
Note $\Phi \circ \Phi^{-1} (x) = x$ and $\Phi^{-1} \circ \Phi(c) = c$. 
Thus $\Phi$ is a bijection. 
\end{proof} 

\begin{corollary} 
The Ehrhart polynomial of $T_{K_n}$ is $L_{T_{K_n}}(t) = \binom{tn+n-1}{n-1}.$
\end{corollary}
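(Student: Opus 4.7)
The plan is to derive this as an immediate corollary of Theorem~\ref{K_n cone}. That theorem establishes a bijection between lattice points at height $h$ in $\mathrm{cone}(T_{K_n})$ and weak compositions of $hn$ of length $n$. Since the lattice points of $tT_{K_n}$ correspond precisely to the lattice points at height $t$ in the cone (via $tT_{K_n} = \mathrm{cone}(T_{K_n})\cap\{z_n = t\}$), the Ehrhart counting function $L_{T_{K_n}}(t)$ equals the number of weak compositions of $tn$ into $n$ parts.

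The second step is the standard stars-and-bars count: the number of sequences $(c_1,\ldots,c_n)\in\Z_{\ge 0}^n$ with $c_1+\cdots+c_n=tn$ is $\binom{tn+n-1}{n-1}$. Plugging this into the bijection gives the stated formula.

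There is no real obstacle here, since all of the work has already been done in Theorem~\ref{K_n cone}. The only minor bookkeeping item is to confirm that the Ehrhart polynomial is being computed with respect to the correct lattice in $\R^{n-1}$ (the ambient lattice of $T_{K_n}$) rather than the lifted lattice in $\R^n$, but the bijection $\Phi$ in the proof of Theorem~\ref{K_n cone} is between compositions and lattice points of $tT_{K_n}$ itself, so no adjustment is needed.
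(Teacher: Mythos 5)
Your proposal is correct and matches the paper's proof exactly: the paper likewise combines Theorem~\ref{K_n cone} with the standard count $\binom{tn+n-1}{n-1}$ of weak compositions of $tn$ into $n$ parts. Nothing further is needed.
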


\begin{proof} 
The number of weak compositions of $tn$ of length $n$ is $\binom{tn+n-1}{n-1}$. 
Then the result follows directly from Theorem~\ref{K_n cone}.
\end{proof}

We next restrict $\Phi$ to obtain a classification of the lattice points in the fundamental parallelepiped, $\Pi_{T_{K_n}}$. 

\begin{corollary}\label{bijection} 
The lattice points of $\Pi_{T_{K_n}}$ are in bijection with weak compositions of $hn$ of length $n$ with each part of size strictly less than $n$. 
\end{corollary}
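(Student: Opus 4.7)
The plan is to restrict the bijection $\Phi$ established in the proof of Theorem~\ref{K_n cone} to the fundamental parallelepiped $\Pi_{T_{K_n}}$. Recall from Definition~\ref{def:fpp} that $\Pi_{T_{K_n}}$ consists precisely of the points of the form $\sum_{i=1}^n \lambda_i (v_i, 1)$ with $0 \le \lambda_i < 1$, where the $v_i$ are the rows of $L_B$. In the proof of Theorem~\ref{K_n cone}, the map $\Phi$ sends a length-$n$ weak composition $c = (c_1, \ldots, c_n)$ of $hn$ to the lattice point
\[
\Phi(c) = \frac{1}{n}\, c \cdot [L_B \mid \1] = \sum_{i=1}^n \frac{c_i}{n}(v_i, 1),
\]
so the coefficient of $(v_i, 1)$ in this expression is exactly $\lambda_i = c_i/n$.

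First, I would observe that the condition $0 \le \lambda_i < 1$ translates directly into $0 \le c_i < n$, so restricting $\Phi$ to the set of weak compositions of $hn$ with every part strictly less than $n$ lands in $\Pi_{T_{K_n}}$ by construction. Second, I would note that Theorem~\ref{K_n cone} already provides that $\Phi$ is a bijection onto the lattice points of $\cone{T_{K_n}}$ at height $h$, and every lattice point of $\Pi_{T_{K_n}}$ at that height has a unique expression $\sum \lambda_i (v_i, 1)$ with $\lambda_i \in [0,1)$; the preimage under $\Phi$ is $(n\lambda_1, \ldots, n\lambda_n)$, which is a weak composition of $hn$ with each part in $[0,n)$ precisely when the lattice point lies in $\Pi_{T_{K_n}}$.

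Thus the restriction of $\Phi$ gives a bijection between weak compositions of $hn$ of length $n$ with each part strictly less than $n$ and lattice points of $\Pi_{T_{K_n}}$ at height $h$, and summing over $h$ yields the full claim. The argument is essentially bookkeeping once Theorem~\ref{K_n cone} is in hand; there is no substantive obstacle, since the only thing to verify is that the fundamental parallelepiped condition on the $\lambda_i$ matches the bound $c_i < n$ on the parts.
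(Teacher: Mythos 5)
Your proposal is correct and matches the paper's approach: the paper likewise obtains the corollary by restricting the bijection $\Phi$ from Theorem~\ref{K_n cone} to the fundamental parallelepiped, observing that the condition $0 \le \lambda_i < 1$ on the coefficients corresponds exactly to each part of the composition being strictly less than $n$. The only cosmetic difference is that the paper re-derives the integrality of the parts by an induction on the coordinates rather than citing the already-established integrality of $\Phi^{-1}$, but the content is the same.
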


\begin{proof} 
Every $x \in \Pi_{T_{K_n}} \cap \Z^n $ is of the form $x = \dfrac{1}{\kappa n} b \cdot [L_B \mid \1]$ such that $0 \le \dfrac{b_i}{\kappa n} < 1$ for each $i \in [n]$, i.e., $0 \le \dfrac{b_i}{\kappa} < n$. 
Each coordinate of the lattice point has the form $x_i = \left( \sum_{j=1}^i \dfrac{b_j}{\kappa} \right) - ih$, which is an integer.
It follows by induction on $j$ that $\kappa$ divides $b_j$ for each $1 \le j \le n$.
Then it follows from $\dfrac{1}{\kappa} \sum_{i=1}^n b_i= hn$ that $\left(\dfrac{1}{\kappa} b\right)$ is a weak composition of $hn$ of length $n$ with parts no greater than $n-1$. 

With each $c \in \{\text{length $n$ weak compositions of $tn$ with parts of size less than $n$} \}$, associate $\kappa c = b$. 
This $b$ will generate a lattice point in the fundamental parallelepiped.
The result follows. 
\end{proof}

\begin{prop} \label{prop:completeh*}
For each $n \ge 2$, the $h^*$-vector of $T_{K_n}$ is given by
\[
h^*(T_{K_n}) = (1, m_1, \ldots, m_n)
\] 
where $m_i$ is the number of weak compositions of $in$ of length $n$ with parts of size less than $n$.
\end{prop}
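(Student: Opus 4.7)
The plan is to derive this proposition directly from the combination of Lemma~\ref{lem:fpp} with Corollary~\ref{bijection}, both of which are already established. First, I would invoke Lemma~\ref{lem:fpp}, which identifies $h_i^*(T_{K_n})$ with $|\Pi_{T_{K_n}} \cap \{x \in \Z^n \mid x_n = i\}|$, namely the number of lattice points of the fundamental parallelepiped whose last (height) coordinate equals $i$.

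Next, I would apply Corollary~\ref{bijection}, which provides a bijection between the lattice points of $\Pi_{T_{K_n}}$ at height $h$ and the length-$n$ weak compositions of $hn$ with all parts strictly less than $n$. Applying this at each height $i$ yields $h_i^*(T_{K_n}) = m_i$. The case $i=0$ is consistent: $h_0^*(T_{K_n}) = 1$ by the standard fact about $h^*$-vectors of lattice polytopes, which matches the unique all-zeros weak composition of $0$.

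Essentially no obstacle remains at this point since the heavy lifting happened earlier: the definition of the bijection $\Phi$ in the proof of Theorem~\ref{K_n cone} and its restriction to parts less than $n$ in Corollary~\ref{bijection} already do all the combinatorial work. The present proposition is simply the translation of that bijection into the language of Ehrhart $h^*$-vectors via Lemma~\ref{lem:fpp}. If one wished to tighten the statement, one could further note that $m_i = 0$ for $i \geq n$, since every weak composition of $in$ into $n$ parts each less than $n$ must satisfy $in \le n(n-1)$, i.e.\ $i \le n-1$; this confirms that the list $(1, m_1, \ldots, m_{n-1})$ indeed has the correct length $n$ for an $(n-1)$-dimensional simplex.
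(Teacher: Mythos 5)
Your proposal is correct and is essentially identical to the paper's own proof, which likewise cites Lemma~\ref{lem:fpp} to identify $h_i^*$ with the count of fundamental-parallelepiped lattice points at height $i$ and then applies Corollary~\ref{bijection}. Your added remarks about the $i=0$ case and the vanishing of $m_i$ for $i\ge n$ are sound but not needed.
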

\begin{proof} 
From Lemma~\ref{lem:fpp}, $h_i^*$ enumerates $|\{\Pi_{T_{K_n}} \cap \{ x_n = i\} \cap \Z^n \}|$. By Corollary \ref{bijection}, the result follows.
\end{proof}

\bibliographystyle{plain}
\bibliography{Marie}

\begin{thebibliography}{10}

\bibitem{athanasiadisstable}
Christos~A. Athanasiadis.
\newblock {$h^\ast$}-vectors, {E}ulerian polynomials and stable polytopes of
  graphs.
\newblock {\em Electron. J. Combin.}, 11(2):Research Paper 6, 13 pp.
  (electronic), 2004/06.

\bibitem{Bapat}
R.~B. Bapat.
\newblock {\em Graphs and matrices}.
\newblock Universitext. Springer, London; Hindustan Book Agency, New Delhi,
  2010.

\bibitem{BatyrevDualPolyhedra}
Victor~V. Batyrev.
\newblock Dual polyhedra and mirror symmetry for {C}alabi--{Y}au hypersurfaces
  in toric varieties.
\newblock {\em J. Algebraic Geom.}, 3(3):493--535, 1994.

\bibitem{BeckBraunNHColorings}
Matthias Beck and Benjamin Braun.
\newblock Nowhere-harmonic colorings of graphs.
\newblock {\em Proc. Amer. Math. Soc.}, 140(1):47--63, 2012.

\bibitem{BeckRobinsCCD}
Matthias Beck and Sinai Robins.
\newblock {\em Computing the continuous discretely}.
\newblock Undergraduate Texts in Mathematics. Springer, New York, second
  edition, 2015.
\newblock Integer-point enumeration in polyhedra, With illustrations by David
  Austin.

\bibitem{BraunUnimodal}
Benjamin Braun.
\newblock Unimodality problems in {E}hrhart theory.
\newblock In {\em Recent trends in combinatorics}, volume 159 of {\em IMA Vol.
  Math. Appl.}, pages 687--711. Springer, [Cham], 2016.

\bibitem{BraunDavisFreeSum}
Benjamin Braun and Robert Davis.
\newblock Ehrhart series, unimodality, and integrally closed reflexive
  polytopes.
\newblock {\em Ann. Comb.}, 20(4):705--717, 2016.

\bibitem{BraunLaplacianMinors}
Benjamin Braun, Robert Davis, Jessica Doering, Ashley Harrison, Jenna Noll, and
  Clifford Taylor.
\newblock Compositions constrained by graph {L}aplacian minors.
\newblock {\em Integers}, 13:Paper No. A41, 22, 2013.

\bibitem{BraunDavisSolusIDP}
Benjamin Braun, Robert Davis, and Liam Solus.
\newblock Detecting the integer decomposition property and {E}hrhart
  unimodality in reflexive simplices.
\newblock 2016.
\newblock {P}reprint at https://arxiv.org/abs/1608.01614.

\bibitem{matrixtreetheorem}
Aaron Dall and Julian Pfeifle.
\newblock A polyhedral proof of the matrix tree theorem.
\newblock Preprint, 2014, arXiv:1404.3876.

\bibitem{DochtermannEngstromEdgeIdeals}
Anton Dochtermann and Alexander Engstr\"om.
\newblock Algebraic properties of edge ideals via combinatorial topology.
\newblock {\em Electron. J. Combin.}, 16(2, Special volume in honor of Anders
  Bj\"orner):Research Paper 2, 24, 2009.

\bibitem{Ehrhart}
Eug{\`e}ne Ehrhart.
\newblock Sur les poly\`edres rationnels homoth\'etiques \`a {$n$}\ dimensions.
\newblock {\em C. R. Acad. Sci. Paris}, 254:616--618, 1962.

\bibitem{regulartriangulations}
Christian Haase, Andreas Paffenholz, Lindsay~C. Piechnik, and Francisco Santos.
\newblock Existence of unimodular triangulations - positive results.
\newblock Preprint, 2014, arXiv:1405.1687.

\bibitem{Hibi1}
Takayuki Hibi.
\newblock Dual polytopes of rational convex polytopes.
\newblock {\em Combinatorica}, 12(2):237--240, 1992.

\bibitem{Nill}
Alexander~M. Kasprzyk and Benjamin Nill.
\newblock Reflexive polytopes of higher index and the number 12.
\newblock {\em Electron. J. Combin.}, 19(3):Paper 9, 18, 2012.

\bibitem{Kirchhoff}
Gustav Kirchhoff.
\newblock Beweis der {E}xistenz des {P}otentials das an der {G}renze des
  betrachteten {R}aumes gegebene {W}erthe hat f\"ur den {F}all dass diese
  {G}renze eine \"uberall convexe {F}l\"ache ist.
\newblock {\em Acta Math.}, 14(1):179--183, 1890.

\bibitem{HibiEhrhartEdge}
Tetsushi Matsui, Akihiro Higashitani, Yuuki Nagazawa, Hidefumi Ohsugi, and
  Takayuki Hibi.
\newblock Roots of {E}hrhart polynomials arising from graphs.
\newblock {\em J. Algebraic Combin.}, 34(4):721--749, 2011.

\bibitem{hibiohsugiedgepolytope}
Hidefumi Ohsugi and Takayuki Hibi.
\newblock Normal polytopes arising from finite graphs.
\newblock {\em J. Algebra}, 207(2):409--426, 1998.

\bibitem{hibiohsugiconj}
Hidefumi Ohsugi and Takayuki Hibi.
\newblock Special simplices and {G}orenstein toric rings.
\newblock {\em J. Combin. Theory Ser. A}, 113(4):718--725, 2006.

\bibitem{padrolpfeiflelaplacian}
Arnau Padrol and Julian Pfeifle.
\newblock Graph operations and {L}aplacian {E}igenpolytopes.
\newblock {\em Jornadas de Matemática Discreta y Algorítmica. ``VII Jornadas
  de Matemática Discreta y Algorítmica''. Castro Urdiales}, pages pp.
  505--516, July 2010.
\newblock https://upcommons.upc.edu/handle/2117/8198?locale-attribute=en.

\bibitem{PayneLattice}
Sam Payne.
\newblock Ehrhart series and lattice triangulations.
\newblock {\em Discrete Comput. Geom.}, 40(3):365--376, 2008.

\bibitem{Postnikov}
Alexander Postnikov.
\newblock Permutohedra, associahedra, and beyond.
\newblock {\em Int. Math. Res. Not. IMRN}, (6):1026--1106, 2009.

\bibitem{schepersvanl}
Jan Schepers and Leen Van~Langenhoven.
\newblock Unimodality questions for integrally closed lattice polytopes.
\newblock {\em Annals of Combinatorics}, pages 1--19, 2013.

\bibitem{SolusNumeral}
Liam Solus.
\newblock Simplices for numeral systems.
\newblock 2017.
\newblock {P}reprint at https://arxiv.org/abs/1706.00480.

\bibitem{Stanley1}
Richard~P. Stanley.
\newblock Decompositions of rational convex polytopes.
\newblock {\em Ann. Discrete Math.}, 6:333--342, 1980.
\newblock Combinatorial mathematics, optimal designs and their applications
  (Proc. Sympos. Combin. Math. and Optimal Design, Colorado State Univ., Fort
  Collins, Colo., 1978).

\bibitem{TranZiegler}
Tuan Tran and G\"unter~M. Ziegler.
\newblock Extremal edge polytopes.
\newblock {\em Electron. J. Combin.}, 21(2):Paper 2.57, 16, 2014.

\bibitem{VillarrealCMGraphs}
Rafael~H. Villarreal.
\newblock Cohen-{M}acaulay graphs.
\newblock {\em Manuscripta Math.}, 66(3):277--293, 1990.

\bibitem{VillarrealEdgePolytopes}
Rafael~H. Villarreal.
\newblock On the equations of the edge cone of a graph and some applications.
\newblock {\em Manuscripta Math.}, 97(3):309--317, 1998.

\bibitem{ZieglerLectures}
G{\"u}nter~M. Ziegler.
\newblock {\em Lectures on polytopes}, volume 152 of {\em Graduate Texts in
  Mathematics}.
\newblock Springer-Verlag, New York, 1995.

\end{thebibliography}

\end{document}